\numberwithin{equation}{section}
\newtheorem{theorem}{Theorem}[section]
\newtheorem{definition}[theorem]{Definition}
\newtheorem{example}[theorem]{Example}
\newtheorem{lemma}[theorem]{Lemma}
\newtheorem{corollary}[theorem]{Corollary}
\newtheorem{remark}[theorem]{Remark}
\newtheorem{notation}[theorem]{Notation}
\newcommand{\restr}{\restriction_}
\newcommand{\loc}{\mathrm{loc}}
\newcommand{\Id}{\mathbb{I}}
\newcommand{\Ll}{{\mathcal L}}
\newcommand{\R}{{\mathbb R}}
\newcommand{\re}{{\mathbb R}}
\newcommand{\N}{{\mathbb N}}
\newcommand{\C}{{\mathbb C}}
\newcommand{\W}{{\mathbb W}}
\newcommand{\h}{{\mathbb H}}
\newcommand{\Hn}{{\mathbb H^n}}
\newcommand{\norm}[1]{\|{#1}\|_{\infty}}
\newcommand{\HH}{{\mbox{H}\Hn}}
\renewcommand{\t}{{\tau}}
\newcommand{\om}{{\omega}}
\newcommand{\F}{{\Phi}}
\newcommand{\de}{{\delta}}
\newcommand{\f}{{\phi}}
\newcommand{\p}{{\psi}}
\newcommand{\Wf}{W^\phi}
\newcommand{\CH}{\mathrm C^1_{\h}}
\newcommand{\gf}{\nabla^\phi}
\newcommand{\dede}[1]{{\frac{\partial}{\partial #1}}}
\newcommand{\df}{{d_\phi}}
\newcommand{\mfrl}{\mathfrak L^{\infty}} 
\newcommand{\rmLinf}{\mathrm L^\infty}
\DeclareMathOperator*{\clos}{clos}
\DeclareMathOperator*{\ri}{r.i.}
\DeclareMathOperator{\sgn}{sgn}
\begin{document}

\title[Intrinsic Lipschitz graphs in $\mathbb H^{n}$]{Intrinsic Lipschitz graphs in Heisenberg groups and continuous solutions of a balance equation}


\author[F. Bigolin, L. Caravenna, F. Serra Cassano]{Francesco Bigolin, Laura Caravenna, Francesco Serra Cassano}
\address{Francesco Bigolin, Dipartimento di Matematica di Trento,
via Sommarive 14, 38050 Povo (Trento), Italy;}
\email{bigolin@science.unitn.it}
\address{Laura Caravenna, Oxford Centre for Nonlinear PDE, Mathematical Institute, University of Oxford, 24-29 St Giles', Oxford OX1 3LB, U.K.;}
\email{laura.caravenna@maths.ox.ac.uk}
\address{Francesco Serra Cassano, Dipartimento di Matematica di Trento,
via Sommarive 14, 38050 Povo (Trento), Italy;}
\email{bigolin@science.unitn.it}

\date{\today}

\thanks{F.B. is supported by PRIN 2008 and University of Trento, Italy.  L.C. has been supported by Centro De Giorgi (SNS di Pisa), by the ERC Starting Grant CONSLAW and the UK EPSRC Science and Innovation award to the Oxford Centre for Nonlinear PDE (EP/E035027/1). F.S.C. is supported by GNAMPA, PRIN 2008 and University of Trento, Italy.}


\begin{abstract}
 
In this paper we provide a characterization of intrinsic Lipschitz graphs in the sub-Riemannian Heisenberg groups in terms of their distributional gradients. Moreover, we prove the equivalence of different notions of continuous weak solutions to the equation $\phi_{y}+ [\phi^{2}/2]_{t}=w$, where $w$ is a bounded measurable function depending on $\phi$. 
\end{abstract}

\maketitle

\tableofcontents

\section{Introduction}


In the last years it has been largely developed the study of intrinsic submanifolds inside the Heisenberg groups $\Hn$ or more general Carnot groups, endowed with their Carnot-Carath\'eodory metric structure, also named sub-Riemannian. 
By an intrinsic regular (or intrinsic Lipschitz) hypersurfaces we mean a submanifold which has, in the intrinsic geometry of $\Hn$, the same role like a $\mathrm C^1$ (or Lipschitz) regular graph has in the Euclidean geometry. Intrinsic regular graphs had several applications within the theory of rectifiable sets and minimal surfaces in CC geometry, in theoretical computer science, geometry of Banach spaces and mathematical models in neurosciences, see \cite{cdpt}, \cite{CK} and the references therein.

We postpone complete definitions of $\Hn$ to Section \ref{S:hnrecalls}. We only remind that the Heisenberg group $\Hn = \C^n \times \R \equiv \R^{2n+1}$ is the simplest example of Carnot group, endowed with a left-invariant metric $d_{\infty}$ (equivalent to its Carnot-Carath\'eodory metric), not equivalent to the Euclidean metric. $\Hn$ is a (connected, simply connected and stratified) Lie group and has a sufficiently rich compatible underlying structure, due to the existence of intrinsic families of left translations and dilations and depending on the horizontal vector fields $X_1,...,X_{2n}$.
We call intrinsic any notion depending directly by the structure and geometry of $\Hn$. For a complete description of Carnot groups \cite{cdpt,gromov2,montg,pansu,pansuthese} are recommended.

As we said, we will study intrinsic submanifolds in $\Hn$. An intrinsic regular hypersurface $S\subset\Hn$ is locally defined as the non critical level set of an horizontal differentiable function, more precisely there exists locally a continuous function $f:\Hn\to\R$ such that $S=\{f=0\}$ and the intrinsic gradient $\nabla_{\h}f=(X_1f,\dots,X_{2n}f)$ exists in the sense of distributions and it is continuous and non-vanishing on $S$ .
Intrinsic regular hypersurfaces can be locally represented as $X_1$-graph by a function $\f:\om\subset\W\equiv\R^{2n}\to\R$, where $\W=\{x_1=0\}$, through an implicit function theorem (see \cite{frasersercas}). In \cite{ascv,BSC1,BSC2} the parametrization $\f$ has been characterized as weak solution of a system of non linear first order PDEs $\gf\f=w$, where $w:\om\to\R^{2n-1}$ and $\gf=(X_2,\dots,X_n,\partial_y+\f\partial_t,X_{n+2},\dots,X_{2n} )$, (see Theorem \ref{miotheorem}). By an intrinsic point of view, the operator $\gf\f$ acts as the intrinsic gradient of the function $\f:\W\to\R$. In particular it can be proved that $\f$ is a continuous distributional solution of the problem $\gf\f=w$ with $w\in \mathrm C^0(\om,\R^{2n-1})$ if and only if $\f$ induces an intrinsic regular graph, (see \cite{BSC2}).

Let us point out that an intrinsic regular graph can be very irregular from the Euclidean point of view: indeed, there are examples of intrinsic regular graphs in $\h^{1}$ which are fractal sets in the Euclidean sense (\cite{kirsercas}).

The aim of our work is to characterize intrinsic Lipschitz graphs in terms of the intrinsic distributional gradient. It is well-know that in the Euclidean setting a Lipschitz graph $S=\{(A,\phi(A)):\ A\in\omega\}$, with $\phi:\omega\subset\R^{m}\to\R$ can be equivalently defined 
\begin{itemize}
\item by means of cones: there exists $L>0$ such that $$C\left((A_0,\phi(A_0));\tfrac{1}{L}\right)\cap S=\{(A_0,\phi(A_0))\}$$ for each $A_0\in\omega$, where
$C\left((A_0,\phi(A_0));\frac{1}{L}\right)=\{(A,s)\in\R^m\times\R:\ |A-A_0|_{\R^m}\leq\frac{1}{L}|s-\phi(A_0)|$;
\item in a metric way: there exists $L>0$ such that $|\f(A)-\f(B)|\leq L|A-B|$ for every $A,B\in\om$;
\item  by the distributional derivatives: there exist the distributional derivatives $$\frac{\partial \f}{\partial x_i}\in \mathrm L^{\infty}(\om)\qquad \forall i=1,\dots,m$$
provided that $\om$ is a regular connected open bounded set.
\end{itemize}
Intrinsic Lipschitz graphs in $\Hn$ have been introduced in \cite{frasersercaslip}, by means of a suitable notion of intrinsic cone in $\Hn$. As consequence, the metric definition (see Definition \ref{D:deflip}) is given with respect to the the graph quasidistance $d_\f$, (see \eqref{fidistanzadef2708}) i.e the function $\f:(\om,d_\f)\to\R$ is meant Lipschitz in classical metric sense.
This notion turns out to be the right one in the setting of the intrinsic rectifiability in $\Hn$. Indeed, for instance, it was proved in \cite{frasersercaslip} that the notion of rectifiable set in terms of an intrinsic regular hypersurfaces is equivalent to the one in terms of intrinsic Lipschitz graphs.

We will denote by $\mathrm{ Lip}_\W(\om)$ the class of all intrinsic Lipschitz function $\f:\om\to\R$ and by $\mathrm{ Lip}_{\W,\loc}(\om)$ the one of locally intrinsic Lipschitz functions. Notice that $\mathrm{ Lip}_\W(\om)$ is not a vector space and that
$$\mathrm{ Lip}(\om)\subsetneq \mathrm{ Lip}_{\W,\loc}(\om)\subsetneq \mathrm C^{0,1/2}_{\loc}(\om),$$
where $\mathrm{ Lip}(\om)$ and $\mathrm C^{0,1/2}_{\loc}(\om)$ denote respectively the classes of Euclidean Lipschitz and $1/2$-H\"older functions in $\om$. For a complete presentation of intrinsic Lipschitz graphs \cite{pinamonti,frasersercaslip} are recommended.

The first main result of this paper is 
the characterization of a parametrization $\f:\om\to\R$ of an intrinsic Lipschitz graph as a continuous distributional solution of $\gf\f=w$, where $w\in \mathrm L^{\infty} (\om,\R^{2n-1})$.

\begin{theorem}
\label{T:firsttheorem}
Let $\omega\subset \W\equiv\R^{2n}$ be an open set, $\f:\omega\rightarrow\R$ be a continuous function and $w\in \mathrm L^{\infty}(\omega;\R)$. 
$\f\in \mathrm{ Lip}_{\W,\loc}(\om;\R)$ if and only if there exists $w\in \rmLinf_{loc}(\omega;\R^{2n-1})$ such that $\f$ is a distributional solution of the system $\gf\f=w$ in $\om$.
\end{theorem}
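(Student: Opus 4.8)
The plan is to exploit the fact that the only nonlinear component of $\gf\f=w$ is the Burgers-type balance equation $\f_y+[\f^2/2]_t=w$, and to translate both implications into statements about the increments of $\f$ along the characteristic curves of the vector field $\partial_y+\f\partial_t$. First I would reduce the general $\Hn$ statement to the model case $n=1$, i.e. $\h^1$: the remaining components of $\gf$ are the linear horizontal fields $X_2,\dots,X_n,X_{n+2},\dots,X_{2n}$, whose distributional derivatives lie in $\rmLinf_\loc$ exactly when $\f$ is Lipschitz along those directions, so the genuine difficulty is confined to the single nonlinear slot. Freezing the other variables, the nonlinear analysis reduces, along the $(y,t)$ directions, to the scalar balance law in $\h^1$. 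The anisotropic structure of the graph quasidistance $\df$ in $\Hn$ then lets one recombine the linear Lipschitz estimates with the genuinely nonlinear one.

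For the implication ``distributional solution $\Rightarrow$ intrinsic Lipschitz'', I would first pass from the continuous distributional solution to the broad (more precisely, broad${}^\ast$) formulation of the balance equation, so that along every approximate characteristic $\gamma$ of $\partial_y+\f\partial_t$ one has $\f(\gamma(b))-\f(\gamma(a))=\int_a^b w\,ds$. Since $w\in\rmLinf_\loc$, this yields $|\f(\gamma(b))-\f(\gamma(a))|\le M\,|b-a|$ on compact sets, i.e. $\f$ is Lipschitz along characteristics. The point is that, up to the anisotropic scaling of $\df$, moving along such a characteristic is the costly direction: computing $\df(A,B)$ by joining $A$ to $B$ through a characteristic arc followed by a vertical/horizontal segment shows that the characteristic bound, together with the $1/2$-H\"older control in the vertical variable, is precisely what is needed to produce a constant $L$ with $|\f(A)-\f(B)|\le L\,\df(A,B)$, which is Definition \ref{D:deflip}.

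For the converse, starting from $\f\in\mathrm{ Lip}_{\W,\loc}(\om)$ I would read the metric/cone condition along the same families of curves: displacing a base point along a characteristic of $\partial_y+\f\partial_t$ (resp.\ along the integral lines of the linear fields $X_j$) corresponds to a controlled $\df$-displacement, so the associated difference quotients of $\f$ are locally bounded. This produces a bounded candidate gradient $w\in\rmLinf_\loc(\om;\R^{2n-1})$; the remaining task is to show that this ``broad'' derivative coincides with the distributional one and defines a genuine $\rmLinf_\loc$ function, which again follows from the broad${}^\ast$/distributional equivalence together with a standard mollification argument.

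The main obstacle I expect is exactly the low regularity: since $\f$ is only continuous, the characteristic ODE $\dot t=\f(y,t)$ need not admit unique solutions, so characteristics may branch and the naive Lagrangian picture breaks down. Controlling this non-uniqueness, and proving that distributional and broad${}^\ast$ solutions coincide for merely continuous $\f$, is the genuinely nonlinear, Burgers-specific heart of the argument and the step on which the whole equivalence rests; everything else is the anisotropic but essentially bookkeeping translation between the metric quantity $\df$ and increments of $\f$ along these curves.
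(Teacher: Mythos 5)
Your geometric outline (isolate the Burgers slot, prove Lipschitz regularity along characteristics plus $1/2$-H\"older regularity in the vertical direction, then assemble these into the $\df$ estimate) is the paper's picture, but the logical order of your first implication contains a genuine circularity. You propose to \emph{start} by upgrading the continuous distributional solution to a broad${}^\ast$/broad solution, and then to read off the Lipschitz bound along characteristics from the integral identity. For $w\in\rmLinf$ this upgrade is not available off the shelf: the prior equivalences (Theorem~\ref{miotheorem}) cover only $w\in \mathrm C^0$, and Example~\ref{Ex:1} shows that the reduction to ODEs along \emph{every} characteristic holds only for a carefully selected pointwise representative $\hat w$, so the passage is a nontrivial selection problem, not a reformulation. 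Worse, the paper's proof of exactly this passage (Theorem~\ref{T:univselection}, via Von Neumann/Arsenin--Kunugui selection theorems) takes as its \emph{hypothesis} that $\phi$ is already uniformly Lipschitz along characteristics: the implication you want to invoke first is proved from the estimate you want to deduce from it. The missing concrete idea is Dafermos' argument (Lemma~\ref{L:DafLipschitz}): test the distributional equation on the thin strip between a characteristic $\gamma$ and its vertical translate $\gamma+\epsilon$; the nonlinear flux contributes the signed term
\begin{equation*}
-\int_a^b\bigl[\phi(s,\gamma(s)+\epsilon)-\phi(s,\gamma(s))\bigr]^2\,ds\ \leq\ 0,
\end{equation*}
so dividing by $\epsilon$ and letting $\epsilon\to 0$ yields the one-sided (and, integrating on the other side of $\gamma$, two-sided) bound $|\phi(b,\gamma(b))-\phi(a,\gamma(a))|\leq\|w\|_{\infty}|b-a|$ directly from the weak formulation, with no representative selection at all. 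Your $1/2$-H\"older control in $t$ is likewise not free: it is the ODE crossing lemma (Lemma~\ref{L:holderode}), itself derived from the Lipschitz-along-characteristics estimate, and it is needed \emph{after} Dafermos' lemma, not assumed alongside it.

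For the converse you are describing one of the two routes the paper develops, namely intrinsic Lipschitz $\Rightarrow$ Lipschitz along characteristics $\Rightarrow$ Lagrangian solution $\Rightarrow$ distributional solution (Corollary~\ref{C:lipordistrlagr} plus Theorem~\ref{T:converseDafermos}), but ``a standard mollification argument'' hides the real work: since the characteristic ODE is non-unique, one must first construct a monotone, surjective Lagrangian parameterisation $\chi$ (Lemma~\ref{L:globchi}), define the source along it by a measurable selection (Lemma~\ref{L:gfunztx} uses theorems of Srivastava and Lusin), and then mollify in the Lagrangian parameter $\tau$; the map $\chi$ is in general neither Lipschitz nor BV (Appendix~\ref{S:extPar}), so the passage to the limit in the nonlinear term rests on the monotonicity, not on standard estimates. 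Note also that bounded difference quotients along characteristics do not by themselves produce an $\rmLinf$ \emph{function}; identifying the candidate gradient with the distributional source requires Rademacher's theorem (Theorem~\ref{T:rademacher}) or the selection machinery. The paper's primary proof of this direction (Lemma~\ref{L:lipDistr}) bypasses all of this by quoting the smooth approximation theorem for intrinsic Lipschitz functions (Theorem~\ref{T:tesipinamonti}) and passing to the limit in the weak formulation; that is the shorter path if you do not want to build Lagrangian parameterisations.
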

%
%
%

We stress that this is indeed different from proving a Rademacher theorem, which is more related to a pointwise rather than distributional characterization for the derivative, see \cite{frasersercaslip}.
Nevertheless, we find that the density of the (intrinsic) distributional derivative is indeed given by the function one finds by Rademacher theorem.
We also stress that there are a priori different notions of \emph{continuous} solutions $\phi:\omega\to\R$ to $\gf\f=w$, which express the Lagrangian and Eulerian viewpoints.
They will turn out to be equivalent descriptions of intrinsic Lipschitz graphs, when the source $w$ belongs to $\rmLinf(\omega;\R^{2n-1})$.
This is proved in Section \ref{S:furthereq} and it is summarized as follows.
\begin{theorem}
\label{T:othertheorem} Let $\phi:\om\to\R$ be a continuous function.
The following conditions are equivalent
\begin{enumerate}
\item \label{item:distr} $\f$ is a distributional solution of the system $\gf\f=w$ with $w\in\rmLinf(\omega;\R^{2n-1})$;
\item \label{item:lagr} $\f$ is a broad solution of $\gf\f=w$, i.e.~there exists a Borel function $\hat w\in \mfrl(\omega;\R^{2n-1})$ s.t.
\begin{itemize}
	\item[(B.1):] $w(A)=\hat w(A)$ $\mathcal L^{2n}$-a.e. $A\in\om$;
	\item[(B.2):] for every continuous vector field $\nabla^{\phi}_{i} $ ($i=2,\dots,2n$) having an integral curve $\Gamma\in \mathrm C^{1}((-\de,\de); \omega)$, $\phi\circ\Gamma$ is absolutely continuous and at points $s$ of differentiability it satisfies
	\[ \displaystyle{\frac{d}{ds}\phi\left(\Gamma(s)\right)= \hat w_i\left(\Gamma(s)\right)}.\]
\end{itemize}
\end{enumerate}
\end{theorem}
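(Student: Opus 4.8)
The plan is to treat the two implications separately, and in each to reduce to the genuinely nonlinear direction $\partial_y+\f\partial_t$. Along the remaining components $\nabla^{\phi}_{i}$, which are linear vector fields with smooth coefficients not involving $\f$, the equivalence between the distributional identity and the pointwise identity along integral curves is a routine consequence of Fubini's theorem and the characterisation of absolutely continuous functions through their weak derivatives, so the whole difficulty sits in the Burgers component. For that component the distributional formulation in (\ref{item:distr}) is the conservation form $\partial_y\f+\partial_t(\f^2/2)=w$, whereas the broad formulation in (\ref{item:lagr}) is the characteristic form $\frac{d}{ds}\f(\Gamma(s))=\hat w(\Gamma(s))$ along curves with $\dot\Gamma=(1,\f\circ\Gamma)$. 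These two forms differ for general weak solutions precisely because of shocks, and the crux of the theorem is that they coincide here exactly because $\f$ is \emph{continuous}.

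For (\ref{item:lagr})$\Rightarrow$(\ref{item:distr}) I would first observe that two $C^1$ integral curves can meet only tangentially, since at a common point the shared value of $\f$ forces equal characteristic speeds $\dot\Gamma$. This lets me select a monotone, hence measurable, foliation $t=T(y,a)$ with $\partial_a T=:J>0$ and pass to Lagrangian coordinates $(y,a)$. Writing $\Phi(y,a)=\f(y,T(y,a))$ and $\Psi(y,a)=\psi(y,T(y,a))$ for a test function $\psi$, the integrand $\f\,\partial_y\psi+\tfrac{\f^2}{2}\partial_t\psi$ transforms, using $dt=J\,da$, into $\Phi J\,\partial_y\Psi-\tfrac{\Phi^2}{2}\,\partial_a\Psi$; integrating by parts in $y$ in the first term and in $a$ in the second, the contributions carrying the merely distributional derivative $\partial_t\f$ cancel identically, and what survives is exactly $-\int w\,\psi$. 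The real work in this direction is to establish enough regularity of the foliation — absolute continuity of $a\mapsto T(\cdot,a)$ and of $\Phi$ in the $a$ variable — to license the two integrations by parts, and it is the monotonicity furnished by the tangency property that makes this feasible.

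For the converse (\ref{item:distr})$\Rightarrow$(\ref{item:lagr}) I would proceed in two stages. First, from the continuous distributional solution I would construct a Lagrangian representation: a foliation by $C^1$ characteristics along $\mathcal L^1$-almost every one of which $\f$ is absolutely continuous and solves the ODE, with $\hat w$ a suitable Borel representative of $w$. Since the field $(1,\f)$ is only continuous, DiPerna--Lions theory is unavailable and I must instead exploit the scalar one-dimensional structure, recovering the ODE along almost every characteristic through a regularisation in which the quadratic commutator is controlled in an averaged ($\mathrm L^1_{\loc}$, or a.e.\ along a subsequence) sense rather than pointwise. Second — and this is the step I expect to be the main obstacle — I must upgrade the identity from almost every characteristic to \emph{every} $C^1$ integral curve, as (B.2) demands. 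Here the interaction of the quadratic flux with the mere continuity of $\f$ is subtle: localising the distributional equation around a curve $\Gamma$ by test functions of the form $\alpha(y)\beta_\varepsilon(t-\gamma(y))$ does not annihilate the flux term in the concentration limit, because $\tfrac{\f^2}{2}-\f\,\dot\gamma$ need not vanish on $\Gamma$. I would therefore squeeze $\Gamma$ between nearby characteristics of the foliation, transferring the absolute continuity and the ODE to $\Gamma$ through the order structure and the continuity of $\f$. In both directions the continuity hypothesis is precisely what rules out shocks and forces the Eulerian and Lagrangian descriptions to agree.
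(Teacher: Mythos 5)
Your architecture (reduce to the Burgers component, handle the linear fields routinely, then argue Lagrangian-wise in both directions) matches the paper's in spirit, but each of your two key steps rests on a claim that genuinely fails, so the proposal has two concrete gaps.

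For (\ref{item:lagr})$\Rightarrow$(\ref{item:distr}): your change of variables $t=T(y,a)$, $dt=J\,da$ with $J=\partial_a T>0$, followed by the two integrations by parts (and, implicitly, the symmetry $\partial_y\partial_a T=\partial_a\partial_y T$), presupposes that $a\mapsto T(y,a)$ is absolutely continuous and nondegenerate. Monotonicity does \emph{not} make this feasible, and this is not merely ``the real work'' left to do --- it is false in general. Characteristics of a continuous non-Lipschitz $\phi$ collapse and split in an essential way (Example~\ref{Ex:2}), so $J$ vanishes on sets of positive measure; worse, a monotone surjective parameterisation of the characteristic family can fail to be locally Lipschitz (Example~\ref{E:fillholesNoGlobLip}) and can carry a nontrivial Cantor part in the parameter (see~\cite{abc}, as remarked at the end of the Appendix). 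When $\partial_a T$ has a singular part, the substitution $dt=J\,da$ loses a set of $t$'s of positive Lebesgue measure, the transformed integral no longer represents the original one, and the integrations by parts are not licensed. The paper's proof of this implication (Theorem~\ref{T:converseDafermos}) never differentiates the parameterisation in $\tau$: it mollifies $\chi$ in the Lagrangian parameter, runs essentially your computation on the smooth $\chi^{\varepsilon}$, $\phi^{\varepsilon}$, and passes to the limit using only weak-$*$ convergence of the measures $\partial_\tau\chi^{\varepsilon}\,d\tau$. That mollification is the missing idea; with it, your formal computation becomes correct.

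For (\ref{item:distr})$\Rightarrow$(\ref{item:lagr}): you correctly isolate the upgrade from ``almost every characteristic'' to ``every characteristic'' as the main obstacle, but squeezing $\Gamma$ between foliation characteristics and invoking order and continuity cannot prove (B.2). First, order among integral curves of a merely continuous field is not preserved: curves touch, merge and swap (for $\phi=\sqrt{|t|}$, a curve rising to $0$ and leaving upward overtakes the curve $\gamma\equiv 0$), so $\Gamma$ is not trapped. Second, and more fundamentally, (B.2) concerns the values of one fixed Borel representative $\hat w$ on the Lebesgue-null set $\Gamma$: the curve $\Gamma$ meets each foliation curve only at isolated parameters, the ODE along that foliation curve holds only for a.e.\ parameter \emph{of that curve}, and a foliation-based $\hat w=\partial_{ss}\chi\circ\Upsilon^{-1}$ is a priori multivalued at points carrying several characteristics with different second derivatives --- so nothing transfers pointwise. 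The mechanism the paper uses (Theorem~\ref{T:univselection}) is quantitative and foliation-free: $\hat w(y,t)$ is defined by a measurable selection (Von Neumann, resp.\ Arsenin--Kunugui) of an integral curve through $(y,t)$ together with the Lebesgue value of its second derivative, and then, for an arbitrary characteristic $\bar\gamma$, parameters at which the averaged second derivative of $\bar\gamma$ differs from $\hat w(\cdot,\bar\gamma(\cdot))$ by $3\varepsilon$ are shown to be $2^{-n}$-separated, because the selected curves through two such nearby points would have to intersect with incompatible derivative increments, contradicting the Lipschitz bound of Lemma~\ref{L:DafLipschitz}. This crossing/comparison argument (also behind the countability of the multivalued set in Lemma~\ref{L:gfunztx}), together with the selection-theoretic definition of $\hat w$, is absent from your sketch; without it neither (B.2) nor the identification (B.1) --- which the paper obtains through the intrinsic Lipschitz property and Rademacher's theorem --- is within reach.
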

In the statement, $\mfrl(\omega;\R^{2n-1})$ denotes the set of measurable bounded functions from $\omega$ to $\R^{2n-1}$, while $\rmLinf(\omega;\R^{2n-1})$ denotes the equivalence classes of Lebesgue measurable functions in $\mfrl(\omega;\R^{2n-1})$ which are identified when differing on a Lebesgue negligible set. We will keep this notation throughout the paper: its relevance is remarked by Examples \ref{Ex:1}, \ref{Ex:2} below.
\paragraph{\bf Outline of the proofs} With the intention of focussing on the nonlinear field, we fix the attention on the case $n=1$. The variables will be denoted by $t$ and $y$, and the subscripts $[\cdot]_{t}$, $[\cdot]_{z}$ will denote the distributional derivatives $\dede{t}=\partial_{t}$, $\dede{z}=\partial_{z}$ in the Euclidean sense w.r.t.~these variables.

Given a continuous distributional solution $\phi\in C^0(\om)$ of the PDE
\[
\gf\f (z,t)= \f_{z}(z,t)+\left[ \frac{\f^{2}(z,t)}{2}\right]_{t}=w(z,t)\qquad {\rm for}\ (z,t)\in \om,
\] 
we first prove that it is Lipschitz when restricted along any characteristic curve $\Gamma(z)=(z,\gamma(z))$, where $\dot\gamma=\phi\circ\Gamma$. The proof follows a previous argument by Dafermos (see Lemma \ref{L:DafLipschitz}). By a construction based on the classical existence theory of ODEs with continuous coefficients, we can then define a change of variable $(z,\chi(z,\tau))$ which straightens characteristics. This change of variables does not enjoy $\mathrm{BV}$ or Lipschitz regularity, it fails injectivity in an essential way, though it is continuous and we impose an important monotonicity property. This monotonicity, relying on the fact that we basically work in dimension $2$, is the regularity property which allows us the change of variables. As we exemplify below, we indeed have an approach different from providing a regular Lagrangian flow of Ambrosio-Di Perna-Lions's theory, and it is essentially two dimensional.
After the change of variables, the PDE is, roughly, linear, and we indeed find a family of ODEs for $\phi$ on the family of characteristics composing $\chi$, with coefficients which now are not anymore continuous, but which are however bounded.
By generalizing a lemma on ODEs already present in~\cite{BSC1}, we prove the $1/2$-H\"older continuity of $\f$ on the vertical direction ($z$ constant), and a posteriori in the whole domain.
This are the main ingredients for establishing that $\phi$ defines indeed a Lipschitz graph: given two points, we connect them by a curve made first by a characteristic curve which joins the two vertical lines through the points, then by the remaining vertical segment.
We manage this way to control the variation of $\phi$ between the two points with their graph distance $\df$, checking therefore the metric definition of intrinsic Lipschitz graphs.

The other implication of Theorem~\ref{T:firsttheorem} is based on the possibility of suitably approximating an intrinsic Lipschitz graph with intrinsic regular graphs. A geometric approximation is provided by~\cite{pinamonti}. We also provide a more analytic, and weaker, approximation as a byproduct of the change of variable $\chi$ which straightens characteristics, by mollification (see the proof of Theorem~\ref{T:converseDafermos}).

We stop now for a while in order to clarify the features of the statement in Theorem~\ref{T:othertheorem}, and why it is so important to differentiate between  $\mfrl(\omega;\R^{2n-1})$ and $\rmLinf(\omega;\R^{2n-1})$.
Lagrangian formulations are affected by altering the representative, as the following example stresses.

\begin{example}[Figure~\ref{fig:1}]
\label{Ex:1}
{Let $\om=(0,1)\times(-1,1)$ and let $\f,w:\om\to\R$ be the functions defined as
$$\f(z,t):=\sqrt{|t|},\qquad w(z,t):=\left\{\begin{array}{ll}
1/2 & {\rm if}\,t\geq 0 \\
-1/2 & {\rm if}\,t < 0
\end{array}\right.$$ 
Then it is easy to verify that $\f$ is a continuous distributional solution of 
$$\gf\f=\dede{z}\f+\dede{t}\frac{\f^2}{2}=w.$$
Consider the specific characteristic curve $(z,\gamma(z)):=(z,0)$. Even if $\dot\gamma(z)=\phi(z,\gamma(z))=0$, the derivative of $\phi$ along this characteristic curve is not the right one:
$$\dede{z}\f(z,0)=0\neq\frac{1}{2}=w(z,0).$$
Equation~\eqref{E:evolutionphi} holds however on every characteristic curves provided we choose correctly an $\mathrm L^{\infty}$-representative $\hat w\in\mfrl(\omega)$ of the source $w$: it is enough to consider 
$$\hat w(z,t):=\left\{\begin{array}{ll}
1/2 & {\rm if}\,t>0 \\
0 & {\rm if}\, t=0\\
-1/2 & {\rm if}\,t< 0
\end{array}\right.$$
Notice that $w(z,t)=\hat w(z,t)$ for $\mathcal L^{2}$-a.e. $(z,t)\in\om$.$\hfill\square$
}
\end{example}

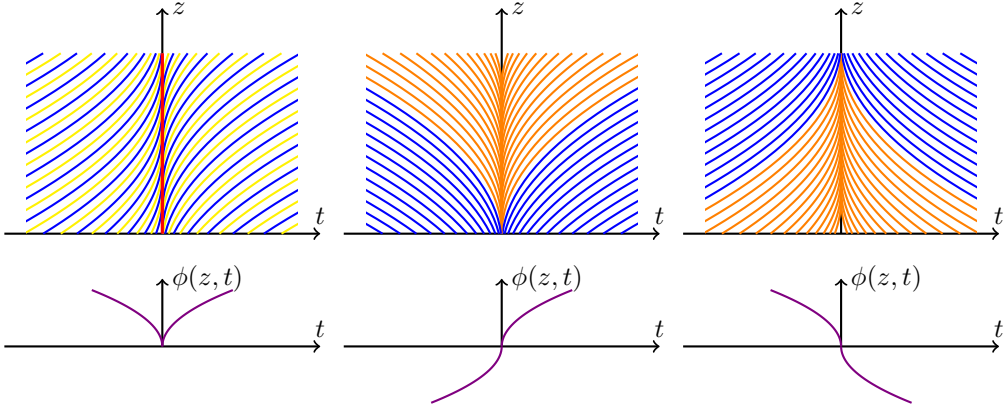
\begin{figure}
\centering
\begin{tikzpicture}[x=.6cm, y=.6cm, style= thick]
\draw[->,color=black] (-1.5,0) -- (5.5,0) node[above] {$t$};
\draw[->,color=black] (2,0) -- (2,5) node[right] {$z$};
\begin{scope}
\clip (-1, -1) rectangle (5, 5);
\foreach \xo in {.25,.75,...,3.25}
\draw[xshift=1.2cm,color=yellow] plot[domain=\xo:4] ({1/4*(\x-\xo)^2},\x);
\foreach \xo in {0,.5,...,3.25}
\draw[xshift=1.2cm,color=blue] plot[domain=\xo:4] ({1/4*(\x-\xo)^2},\x);
\foreach \xo in {-3.25,-2.75,...,-0.25}
\draw[xshift=1.2cm,color=yellow] plot[domain=0:4] ({1/4*(\x-\xo)^2},\x);
\foreach \xo in {-3,-2.5,...,-0.25}
\draw[xshift=1.2cm,color=blue] plot[domain=0:4] ({1/4*(\x-\xo)^2},\x);
\foreach \xo in {.25,.75,...,3.25}
\draw[xshift=1.2cm,color=blue] plot[domain=\xo:4] ({-1/4*(\x-\xo)^2},-\x+4);
\foreach \xo in {0,.5,...,3.25}
\draw[xshift=1.2cm,color=yellow] plot[domain=\xo:4] ({-1/4*(\x-\xo)^2},-\x+4);
\foreach \xo in {-3.25,-2.75,...,-0.25}
\draw[xshift=1.2cm,color=blue] plot[domain=0:4] ({-1/4*(\x-\xo)^2},-\x+4);
\foreach \xo in {-3,-2.5,...,-0.25}
\draw[xshift=1.2cm,color=yellow] plot[domain=0:4] ({-1/4*(\x-\xo)^2},-\x+4);
\end{scope}
\draw[->,color=black] (-1.5,-2.5) -- (5.5,-2.5) node[above] {$t$};
\draw[->,color=black] (2,-2.5) -- (2,-1) node[right] {$\phi(z,t)$};
\draw[xshift=1.2cm,color=violet] plot[domain=0:1.25] ({\x^2},\x-2.5);
\draw[xshift=1.2cm,color=violet] plot[domain=-1.25:0] ({\x^2},-\x-2.5);
\phantom{\draw[xshift=1.2cm,color=blue] plot[domain=-1.25:0] ({\x^2},\x-2.5);}
\draw[color=red,style= very thick] (2,0)--(2,4);
\end{tikzpicture}
\begin{tikzpicture}[x=.6cm, y=.6cm, style= thick]
\draw[->,color=black] (-1.5,0) -- (5.5,0) node[above] {$t$};
\draw[->,color=black] (2,0) -- (2,5) node[right] {$z$};
\begin{scope}
\clip (-1, -1) rectangle (5, 5);
\foreach \xo in {0,0.25,...,3.25}
\draw[xshift=1.2cm,color=orange] plot[domain=\xo:4] ({1/4*(\x-\xo)^2},\x);
\foreach \xo in {-3.25,-3,...,-0.25}
\draw[xshift=1.2cm,color=blue] plot[domain=0:4] ({1/4*(\x-\xo)^2},\x);
\foreach \xo in {0,0.25,...,3.25}
\draw[xshift=1.2cm,color=orange] plot[domain=\xo:4] ({-1/4*(\x-\xo)^2},\x);
\foreach \xo in {-3.25,-3,...,-0.25}
\draw[xshift=1.2cm,color=blue] plot[domain=0:4] ({-1/4*(\x-\xo)^2},\x);
\end{scope}
\draw[->,color=black] (-1.5,-2.5) -- (5.5,-2.5) node[above] {$t$};
\draw[->,color=black] (2,-2.5) -- (2,-1) node[right] {$\phi(z,t)$};
\draw[xshift=1.2cm,color=violet] plot[domain=0:1.25] ({\x^2},\x-2.5);
\draw[xshift=1.2cm,color=violet] plot[domain=-1.25:0] ({\x^2},\x-2.5);
\end{tikzpicture}
\begin{tikzpicture}[x=.6cm, y=.6cm, style= thick]
\draw[->,color=black] (-1.5,0) -- (5.5,0) node[above] {$t$};
\draw[->,color=black] (2,0) -- (2,5) node[right] {$z$};
\begin{scope}
\clip (-1, -1) rectangle (5, 5);
\foreach \xo in {0,0.25,...,3.25}
\draw[xshift=1.2cm,color=orange] plot[domain=\xo:4] ({1/4*(\x-\xo)^2},-\x+4);
\foreach \xo in {-3.25,-3,...,-0.25}
\draw[xshift=1.2cm,color=blue] plot[domain=0:4] ({1/4*(\x-\xo)^2},-\x+4);
\foreach \xo in {0,0.25,...,3.25}
\draw[xshift=1.2cm,color=orange] plot[domain=\xo:4] ({-1/4*(\x-\xo)^2},-\x+4);
\foreach \xo in {-3.25,-3,...,-0.25}
\draw[xshift=1.2cm,color=blue] plot[domain=0:4] ({-1/4*(\x-\xo)^2},-\x+4);
\end{scope}
\draw[->,color=black] (-1.5,-2.5) -- (5.5,-2.5) node[above] {$t$};
\draw[->,color=black] (2,-2.5) -- (2,-1) node[right] {$\phi(z,t)$};
\draw[xshift=1.2cm,color=violet] plot[domain=0:1.25] ({\x^2},-\x-2.5);
\draw[xshift=1.2cm,color=violet] plot[domain=-1.25:0] ({\x^2},-\x-2.5);
\end{tikzpicture}
\caption{Illustration of Examples~\ref{Ex:1},~\ref{Ex:2}. Characteristics are drown for three particular continuous distributional solution to the equation $\phi_{z}+[\phi^{2}/2]_{t}=\sgn(t)/2$. Below the graphs of the corresponding functions $\phi$ are depicted.}
\label{fig:1}
\end{figure}

Before outlining Theorem~\ref{T:othertheorem} we exemplify other features mentioned above by similar examples.

\begin{example}[Figure~\ref{fig:1}]{
\label{Ex:2}
Let $\om=(0,1)\times(-1,1)$, and choose \[\f(z,t)=-\sgn t\sqrt{|t|}.\] 
Again
$$\gf\f=\dede{z}\f+\dede{t}\frac{\f^2}{2}=\left\{\begin{array}{ll}
1/2 & {\rm if}\,t>0 \\
-1/2 & {\rm if}\,t\leq 0
\end{array}\right.=:w(z,t).$$
One easily sees that characteristics do collapse in an essential way.
Considering instead
\[
\f(z,t)=\sgn t\sqrt{|t|}
\] 
characteristics do split in an unavidable way.
Therefore, while it is proved in~\cite{GC} that in Example~\ref{Ex:1} one can choose for changing variable a flux which is better than a generic other---\emph{the regular Lagrangian flow}---we are not always in this case. This is the main reason why we refer in our first change of variables to a monotonicity property.
}
\end{example}

We have now motivated the further study for the stronger statement of Theorem~\ref{T:othertheorem}.
In order to prove it, we consider also the weaker concept of \emph{Lagrangian solution}: the idea is that the reduction on characteristics is not required on \emph{any} characteristic, but on a set of characteristics composing the change of variables $\chi$ that one has chosen.
Exhibiting a suitable set of characteristics for the change of variables $\chi$ is part of the proof.
Roughly, an $\mfrl$-representative $w_{\chi}$ for the source of the ODEs related to $\chi$ is provided by taking the $z$-derivative of $\phi(z,\chi(z,\tau))$, which is $\phi$ evaluated along the characteristics of the Lagrangian parameterisation; by construction, it coincides with the second $y$-derivative of $\chi(z,\tau)$. Here we have hidden the fact that we need to come back from $(z,\tau)$ to $(z,t)$, a change of variable that is not single valued, and not surjective since the second derivative was defined only almost everywhere. We overcome the difficulty choosing any value of the second derivative when present, and showing that it suffices.
However, if one changes the set of characteristics in general one arrives to a different function $w_{\chi'}\in \mfrl(\omega;\R)$. 

We have called \emph{broad solution} a function which satisfies the reduction on every characteristic curve.
In order to have this stronger characterization, we give a different argument borrowed from \cite{abc}.
We define a universal source term $\hat w$ in an abstract way, by a selection theorem, at each point where there exists a characteristic curve with second derivative, without restricting anymore to a fixed family of characteristics providing a change of variables.
After showing that this is well defined, we have provided a universal representative of the intrinsic gradient of $\phi$. In cases as Examples~\ref{Ex:1},~\ref{Ex:2} it extends the one, defined only almost everywhere, provided by Rademacher theorem.

\paragraph{\bf Outline of the paper} The paper is organized as follows. In Section~\ref{S:hnrecalls} we recall basic notions about the Heinsenberg groups. In Section~\ref{S:pderecalls} we fix instead notations relative to the PDE, mainly specifying the different notions of solutions we will consider. One of them will involve a change of variables, for passing to the Lagrangian formulation, which is mainly matter of Section~\ref{S:existencepartialpar} and it is basically concerned with classical theory on ODEs. Then Appendix~\ref{S:extPar} also explains how to extend a partial change of variables of that kind to become surjective, and provides a counterexample to its local Lipschitz regularity; it is improved in~\cite{abc} showing that it is not in general of Bounded variation. In Section~\ref{S:equivalence} we prove the equivalence among the facts that either a continuous function $\phi$ describes a Lipschitz graph or it is a distributional solution to the PDE $\nabla^{\f}\f=w$, $w\in \rmLinf$.
The further equivalencies are finally matter of Section~\ref{S:furthereq}.

With some simplification, we can illustrate the main connections by the following papillon.
As mentioned above, there is also a connection with the existence of smooth approximations (see the proof of Theorem~\ref{T:converseDafermos} by mollification and \cite{pinamonti} with a more geometric procedure).

\begin{displaymath}
\xymatrix{
	{\text{Def.~\ref{D:deflip}}}
      \ar@{.}[d]
  &  {\text{Def.~\ref{D:Lagrsol}}}
     \ar@{.}[dd]
  &{\text{Def.~\ref{D:broadsol}}}
      \ar@{.}[d]
 \\
 *+[F]{\text{Continuous Distributional}}
	\ar@/^0pc/[rr]_{\text{Th.\ref{T:univselection}+L.\ref{L:DafLipschitz}}}
	\ar@/^1pc/[dr]_{\text{Cor.\ref{C:lipordistrlagr} (n=1)}}
	\ar@<1ex>[dd]^{\text{L.\ref{L:distrLip}}}
  &  &*+[F]{\text{Continuous Broad}}
 	\ar@/^1pc/[dl]^{\text{L.\ref{L:globchi}}}
	\ar@/^1pc/@{.>}@<3ex>[dd]_{\text{clear }}
 \\
   &*+[F]{\text{Continuous Lagrangian}}
 	\ar@/^1pc/[ul]^{\text{Cor.\ref{T:converseDafermos}}}
 	\ar@/^1pc/[ur]_{\text{Th.\ref{T:univselection}}}
  	\ar@{.>}[dr]^{\text{clear}}
\\
  *+[F]{\text{Intrinsic Lipschitz}}
 	\ar@{->}[ur]|{\text{Cor.\ref{C:lipordistrlagr} (n=1)}}
 	\ar@/^1pc/[uu]^{\text{L.\ref{L:lipDistr}}}
	& &
      *+[F]{\text{Continuous Broad*}}
       	\ar@/^1pc/@{.>}[ul]^{\text{L.\ref{L:globchi}}}
\\
  {\text{Def.~\ref{D:distribsol}}}
      \ar@{.}[u]
	& &
      {\text{Def.~\ref{D:broad*sol}}}
      \ar@{.}[u]
        }
\end{displaymath}

{\bf A remark about the identification of the source terms.}
The intrinsic gradient is unique.
Suppose $\phi$ is both intrinsic Lipschitz and a broad solution: then by definition at points of intrinsic differentiability the intrinsic gradient must coincide with $\hat w$.
As a broad solution is also a Lagrangian solution, at those points it must coincide also with $\bar w$.
By Rademacher theorem, these functions are therefore identified Lebesgue almost everywhere, and by Lemma~\ref{L:lipDistr} they are representative of the $\rmLinf(\omega;\R^{2n-1})$-function which is the source term of the balance law.
\\
We conclude remarking again that this identification Lebesgue almost everywhere is not enough for fixing them, as both Lagrangian solutions and broad solutions require the right representative of the source also on some of the points of non-intrinsic differentiability, not captured by the source term in the distributional formulation (Fig.~\ref{fig:1}).
By Rademacher theorem, instead, we have that the source term in the broad formulation not only fixed dirctly the source term in the Lagrangian formulation, but also in the distributional one.
Actually, it suffices the source of a Lagrangian formulation in order to fix directly the one in the distributional one.



{\it Acknowledgements.} We warmly thank Stefano Bianchini and Giovanni  Alberti for useful discussions and important suggestions, in particular on the subject of Section~\ref{S:furthereq}.

\section{Sub-Riemannian Heisenberg Group}
\label{S:hnrecalls}

{\bf Definition: a noncommutative Lie group.}
We denote the points of $\Hn\equiv\C^n\times\R\equiv\R^{2n+1}$
by
\[
  P=(x,t),\qquad x\R^{2n},\ t\in\R.
\]
If $P=(x,t)$, $Q=(x',t')\in \Hn$ and $r>0$, the group
operation reads as
\begin{equation}
P\cdot Q:=\left (x+x',t+t'+\frac{1}{2}\sum_{j=1}^n\Big( x_jx'_{j+n}-x_{j+n}x'_j \Big)\right).
\end{equation}
The group identity is the origin 0 and one has
$[x,t]^{-1}=[-x,-t]$. In $\mathbb H^n$ there is a natural one
parameter group of non isotropic dilations
$\delta_{r}(P):=[rx,r^2t]$ , $r>0$.

The group $\h^n$ can be endowed with the homogeneous norm
\begin{equation*}
\| P\|_\infty:=\max\{|x|, |t|^{1/2}\}
\end{equation*}
and with the left-invariant and homogeneous distance
\begin{equation*}
   d_{\infty}(P,Q):=\| P^{-1}\cdot Q\|_\infty.
\end{equation*}
The metric $d_\infty$ is equivalent to the standard
Carnot-Carath\'eodory distance. It follows that the Hausdorff
dimension of $(\Hn,d_{\infty})$ is $2n+2$, whereas
its topological dimension is $2n+1$.

The Lie algebra $\mathfrak{h}_n$ of left invariant vector fields
is (linearly) generated by
\begin{equation}\label{defcampiHn}
  X_j =\dede{x_j}-\frac{1}{2}x_{j+n}\dede{t},
  \quad X_{j+n}=\dede{x_{j+n}}+\frac{1}{2}x_j\dede{t},
  \quad j=1,\dots,n,
  \quad T=\dede{t}
\end{equation}
and the only nonvanishing commutators are
\begin{equation*}
[X_j,X_{j+n}] = T, \qquad j=1,\dots n.
\end{equation*}

{\bf Horizontal fields and differential calculus.}
We shall identify vector fields and associated first order differential operators; thus the vector fields  $X_1,\dots,X_{2n}$ generate a vector bundle on $\Hn$, the so called {\it horizontal} vector bundle $\HH$ according to the notation of Gromov (see \cite{gromov2}), that is a vector subbundle of $\hbox{\rm T}\Hn$, the tangent vector bundle of $\Hn$. Since each fiber of $\HH$ can be canonically identified with a vector subspace of $\R^{2n+1}$, each section $\varphi$ of $\HH$ can be identified with a  map $\varphi:\Hn\rightarrow\R^{2n+1}$. At each point $P\in \Hn$ the horizontal fiber is indicated as $\HH_{P}$ and each fiber can be endowed with the scalar  product $\langle{\cdot},{\cdot}\rangle_{P}$ and the associated norm $\|\cdot\|_P$ that make the vector fields $X_1,\dots,X_{2n}$ orthonormal.

\begin{definition}
A real valued function $f$, defined on an open set
$\Omega\subset\Hn$, is said to be of class
$\mathrm C^1_\h(\Omega)$ if $f\in \mathrm C^0(\Omega)$ and the distribution
\[
  \nabla_\h f: =(X_1f,\dots,X_{2n}f)
\]
is represented by a continuous function. 
\end{definition}

\begin{definition}
We shall say that $S\subset \Hn$ is an {\sl $\h$-regular hypersurface} if for every $P\in S$ there exist an open ball
$U_{\infty}(P,r)$ and a function $f\in\CH (U_{\infty}(P,r))$ such that
\begin{itemize}
\item[i:] $S\cap U_{\infty}(P,r) = \{Q\in U_{\infty}(P,r): f(Q)=0\}$;
\item[ii:] $\nabla_{\h}f(P)\neq 0$.
\end{itemize} 
\end{definition}
The {\em horizontal normal} to $S$ at $P$ is $\nu_S(P):=-\dfrac{\nabla_\h f (P)}{|\nabla_\h f (P)|}. $

\begin{notation}\label{notation}{\rm
Recalling that we denote a point of $\Hn$ as $P=(x,t)\in\Hn$, where $x=(x_1,\dots,x_{2n})\in\R^{2n}$, $t\in\R$, it is convenient to introduce the following notations:
\begin{itemize}
\item We will denote a point of $\R^{2n}\equiv\R^{2n-1}\times\R$ as $A=(z,t)\in\Hn$, where $z\in\R^{2n}$, $t\in\R$. If $z=(z_1,\dots,z_{2n-1})\in\R^{2n-1}$, for given $i=1,\dots,2n-1$, we denote $\hat z_i=(z_1,\dots,z_{i-1},z_{i+1},\dots,z_{2n})\in\R^{2n-2}$. When we will use this notation, we also denote a point $A=(z,t)\in\R^{2n}$ as $A=(z_i,\hat z_i,t)\in\R\times\R^{2n-2}\times\R$.
\item Let $B\subset\R^{2n}$, for given $\hat z_i\in\R^{2n-2}$ and $t\in\R$ we will denote $B_{\hat z_i,t}:=\{s\in\R:\ (s,\hat z_i,t)\in B\};$
\item Let $B\subset\R^{2n}$, for given $z\in \R^{2n-1}$ we will denote$B_{z}:=\{t\in\R:\ (z,t)\in B\}.$
\item Let $B\subset\R^{2n}$, for given $\hat z_i\in\R^{2n-2}$ we will denote $B_{\hat z_i}:=\{(s,t)\in\R^2:\ (s,\hat z_i,t)\in B\};$
\item $\mathbb W=\{(x,t)\in\Hn:\ x_1=0\}$. A point $A=(0,z_1,\dots,z_{2n-1},t)\in\W$ will be identified with the point $(z,t)\in\R^{2n}$.
\item Let $\om\subset\W$, for given $z_n\in\R$ we will denote $\om_{z_n}:=\{(\hat z_n,t)\in\R^{2n-1}:\ (z_n,\hat z_n,t)\in\om\}$.
\end{itemize}
}\end{notation}


\begin{definition}
A set $S\subset\Hn$ is an $X_1$-graph if there is a function $\phi:\omega\subset\W\rightarrow\mathbb V$ such that 
$S=G_{\mathbb H,\phi}^1(\omega):=\{A\cdot\phi(A)e_1:A\in\omega\}.$
\end{definition}

Let us recall the following results proved in \cite{frasersercas}.

\newcommand{\primacomp}{y_{1}}
\newcommand{\tcomp}{t}
\begin{theorem}[Implicit Function Theorem]
\label{DiniTheorem}
Let $\Omega$ be an open set in $\Hn$, $0\in\Omega$, and let $f\in\CH 
(\Omega)$ be such that $X_1f>0$. 
Let
$S:=\{(x,t)\in\Omega: f(x,t)= 0\};$ 
then there exist a connected open neighborhood $\mathcal{U}$ of $0$ and a unique continuous 
function $\phi:\omega\subset\W\rightarrow [-h,h]$ such that $S\cap 
\overline{\mathcal{U}}  = \F(\omega)$, where $h>0$ and $\F$ is the map defined 
as 
\[
\omega\ni(z,\tcomp)
\mapsto
\F(z,\tcomp)=(z,\tcomp)\cdot\phi(z,\tcomp)e_1
\]
and given explicitly by
\begin{equation*}
\begin{array}{ll}
\F(z,\tcomp)=\left(\f(z,\tcomp),z_1,\dots,z_{2n-1},\tcomp-\displaystyle\frac{z_n}{2}\f(z,\tcomp)\right) & \mbox{if }n\geq 2\\
\F(z,\t)=\left(\f(z, \tcomp),z,\tcomp-\displaystyle\frac{z}{2}\f(z, \tcomp )\right) & \mbox{if }n=1.
\end{array}
\end{equation*}
\end{theorem}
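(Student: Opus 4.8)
The plan is to foliate a neighborhood of the origin by the integral curves of the horizontal field $X_1$ and to exploit the hypothesis $X_1f>0$, which makes $f$ strictly monotone along each such curve; the implicit function $\phi$ will then measure the flow time needed to reach the zero level set $S$.

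The first observation I would record is that the integral curve of $X_1=\partial_{x_1}-\tfrac12 x_{n+1}\partial_t$ issuing from a point $A=(0,z,t)\in\W$ is exactly $s\mapsto A\cdot s e_1$. Indeed, the group law gives $A\cdot s e_1=(s,z_1,\dots,z_{2n-1},t-\tfrac{z_n}{2}s)$, whose velocity equals $\partial_{x_1}-\tfrac12 x_{n+1}\partial_t=X_1$ because the coordinate $x_{n+1}=z_n$ stays constant along the curve. Hence $\F(z,t)=A\cdot\phi(z,t)e_1$ is precisely the point obtained by flowing $X_1$ for time $\phi(z,t)$ starting from $A$, and the map $(A,s)\mapsto A\cdot se_1$ fibers every neighborhood of $0$ over $\W$: its inverse sends $P=(x,t)$ to the pair $\big(P\cdot(-x_1)e_1,\,x_1\big)$, with $P\cdot(-x_1)e_1\in\W$ and flow time $s=x_1$ (using that $se_1\cdot s'e_1=(s+s')e_1$).

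The technical heart is the following chain rule for $\CH$ functions: for any integral curve $\gamma(s)=A\cdot se_1$ the scalar function $s\mapsto f(\gamma(s))$ is $C^1$ and $\frac{d}{ds}f(\gamma(s))=(X_1f)(\gamma(s))$. Since $f$ is only assumed to possess a continuous distributional horizontal gradient, not classical derivatives, I would prove this by approximation: choose smooth $f_\varepsilon$ with $f_\varepsilon\to f$ and $\nabla_\h f_\varepsilon\to\nabla_\h f$ locally uniformly (a standard mollification property of $\CH$), apply the classical fundamental theorem of calculus $f_\varepsilon(\gamma(s_2))-f_\varepsilon(\gamma(s_1))=\int_{s_1}^{s_2}(X_1f_\varepsilon)(\gamma(s))\,ds$, and pass to the limit to obtain the same identity for $f$; continuity of $X_1f$ then yields the $C^1$ claim.

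With this in hand the argument becomes a one-dimensional monotone implicit function theorem carried out fiberwise. By continuity of $X_1f$ and $X_1f(0)>0$ there is a neighborhood on which $X_1f\ge c>0$, so each $s\mapsto f(A\cdot se_1)$ is strictly increasing. Fixing $h>0$ with $[-h,h]e_1\subset\Omega$ and $f(-he_1)<0<f(he_1)$, the same strict sign condition persists for $A$ in a small open $\om\subset\W$; the intermediate value theorem together with strict monotonicity then produces a unique $\phi(A)\in(-h,h)$ with $f(A\cdot\phi(A)e_1)=0$. Continuity of $\phi$ follows because any limit point of $\phi(A_k)$, $A_k\to A$, is a zero of $s\mapsto f(A\cdot se_1)$, hence equals $\phi(A)$ by uniqueness (quantitatively, $c\,|\phi(A)-\phi(A')|\le|f(A'\cdot\phi(A)e_1)|\to0$). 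Finally I would take $\mathcal U$ to be the open cylinder $\{A\cdot se_1:\ A\in\om,\ s\in(-h,h)\}$; by the unique fibering every $P\in\mathcal U$ sits over a unique $A\in\om$ at flow time $s=x_1(P)$, and $f(P)=0$ iff $s=\phi(A)$, i.e.\ $P=\F(A)$. This yields $S\cap\mathcal U=\F(\om)$, the closed version in the statement following by a routine adjustment of the neighborhoods, while uniqueness of $\phi$ is forced by the fact that each fiber contains at most one zero; the explicit formula for $\F$ is just the product $A\cdot\phi e_1$ computed above. I expect the genuinely Heisenberg-specific obstacle to be exactly the chain-rule lemma: legitimizing the fundamental theorem of calculus for $f$ along $X_1$-curves when only the distributional horizontal gradient is known to be continuous.
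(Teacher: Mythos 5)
The paper does not prove Theorem~\ref{DiniTheorem}: it is recalled, without proof, from \cite{frasersercas}, so the only possible comparison is with the argument given in that reference. Your proposal is correct and is essentially that argument: fiber $\Hn$ by the $X_1$-integral curves $s\mapsto A\cdot se_1$, $A\in\W$; justify the identity $f(A\cdot s_2e_1)-f(A\cdot s_1e_1)=\int_{s_1}^{s_2}X_1f(A\cdot re_1)\,dr$ by smoothing (with the group convolution $f_\varepsilon=\rho_\varepsilon\ast f$ one has $X_1f_\varepsilon=\rho_\varepsilon\ast X_1f$ by left invariance of $X_1$, so both $f_\varepsilon\to f$ and $X_1f_\varepsilon\to X_1f$ locally uniformly --- this is exactly the ``standard mollification property'' you invoke, and it is proved in \cite{frasersercas}); then run the scalar monotone implicit function theorem on each fiber, with continuity of $\phi$ coming from the uniform bound $X_1f\geq c>0$ as you indicate. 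Only two minor points deserve attention. First, the statement as reproduced in this paper omits the hypothesis $f(0)=0$ (present in the original formulation), which your proof --- like any proof --- needs at the step where you assert $f(-he_1)<0<f(he_1)$. Second, in the final adjustment from $S\cap\mathcal U=\F(\omega)$ to $S\cap\overline{\mathcal U}=\F(\omega)$, the clean way is to take $\omega$ to be a compact neighborhood of $0$ in $\W$, shrunk so that $|\phi|<h$ on it; this is consistent with the statement, which only requires $\omega\subset\W$, and confirms that the adjustment is indeed routine as you claim.
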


Let $n\geq2$, $A_0=(z_1^0,\dots,z_{2n-1}^0,t^0)\in\mathbb R^{2n}$ and define
\begin{equation*}
\begin{array}{cl}
I_r(A_0)&:=\displaystyle\left\{(z,t)\in\mathbb R^{2n}:\,|z_n-z_n^0|<r,\, \sum_{i=1}^{2n-1} [(z_i-z_i^0)^2]< r^2,\,|t-t^0|<r\right \}\,.
\end{array}
\end{equation*}

When $n=1$ and $A_0=(z^0,t^0)\in\mathbb R^2$ let
$$I_r(A_0):=\displaystyle{\left\{(z,t)\in\mathbb R^2:\,|z-z^0|<r,\,|t-t^0|<r\right\}}\,.$$


Following \cite{ascv,tesibigolin,vitttesi} we define the graph quasidistance $\df$ on $\om$. We set $\mathbb O_1:=\{(x,t)\in\Hn:\ x_1=0,t=0\}$, $\mathbb T:=\{(x,t)\in\Hn:\ x_1=0,\dots, x_{2n}=0\}$

\begin{definition}\label{fidistanzadef2708}
For $A=(z_n,\hat z_n,\tcomp),\,A'=(z_n',\hat z_n',\tcomp')\in\omega$ we define
\begin{equation}\label{defindf}
\df(A,A'):=\|\pi_{\mathbb O_1}(\F(A)^{-1}\cdot\F(A'))\|_\infty+
\|\pi_{\mathbb T}(\F(A)^{-1}\cdot\F(A'))\|_\infty
\end{equation}
Following the notations \ref{notation} we have explicitly if $n\geq 2$
$$\df(A,A')=|(z_n',\hat z_n')-(z_n,\hat z_n)|+\left|\tcomp'-\tcomp-\frac{1}{2}(\f(A)+\f(A'))(z_n'-z_n)+\sigma(\hat z_n,\hat z_n')\right|^{1/2};$$
where $\sigma(\hat z_n,\hat z_n')=\frac{1}{2}\sum_{j=1}^{n-1}(z_{j+n}z'_j-z_jz'_{j+n})$.
If $n=1$ and $A=(z_2,\tcomp),A'=(z_1',\tcomp')\in\omega$ we have $$\df(A,A')=|z_1'-z_1|+\left|\tcomp'-\tcomp-\frac{1}{2}(\f(A)+\f(B))(z_1'-z_1)\right|^{1/2}.$$
\end{definition}

An intrinsic differentiable structure can be induced on $\W$ by means of $d_\f$, see \cite{ascv,tesibigolin,vitttesi}.
We remind that a map $L:\W\to\R$ is $\W$-linear if it is a group homeomorphism and $L(rz,r^{2}t)=rL(z,t)$ for all $r>0$ and $(z,t)\in\W$. 
We remind then the notion of $\gf$-differentiablility.

\begin{definition}\label{defiWfdiff}
Let $\f:\omega\subset\W\rightarrow\re$ be a fixed continuous function, and let
$A_0\in\omega$ and $\p:\omega\rightarrow\re$ be given. 
\begin{itemize}
\item 
We say that $\p$ is $\gf$-differentiable at $A_0$ if there is a unique $\W$-linear functional $L:\W\rightarrow\re$ such that
\begin{equation}\label{definWfdiffernziabilita}
\lim_{A\rightarrow A_0}\frac{\p(A) -\p(A_0) - L(A_0^{-1}\cdot A)}{\df(A_0,A)} = 0.
\end{equation}

\item
We say that $\p$ is uniformly $\gf$-differentiable at $A_0$ if there is a unique $\h$-linear functional $L:\W\rightarrow\re$ such that
\begin{equation}\label{defdiL_fperW_fdiffunif}
\lim_{r\rightarrow 0}\sup_{A,B\in I_r(A_0)\atop{A\neq B}}\left\{\frac{|\p(B) -\p(A) - L(B^{-1}\cdot A)|}{\df(A,B)}\right\}=0
\end{equation}
\end{itemize}
\end{definition}
We will denote $L=d_\W\f(A_0)$.
If $\f$ is uniformly $\gf$-differentiable at $A_0$, then $\f$ is $\gf$-differentiable at $A_0$.

In \cite{ascv} it has been  proved that each $\h$- regular graph $\F(\omega)$ admits an  intrinsic gradient $\nabla^\f\f\in \mathrm C^0(\omega;\R^{2n})$, in  sense of distributions, which shares a lot of properties with the Euclidean gradient. Since $\mathbb W=\exp(\text{span}\{X_2,\dots,X_{2n},T\})$, where the vector fields $X_2,\dots,X_{2n},T$ must be understood as restricted to $\W$, it is possible to define the differential operators given, in  sense of distributions, by
\begin{equation}
 \label{defnablaphi2}
\begin{array}{l}
   \displaystyle\Wf\f:=X_{n+1}\f+\frac{1}{2}T(\f^2)
=\dede{x_{n+1}}\phi+\frac{1}{2}\dede{t}(\f^2) ,\\
   \displaystyle\nabla^\f\f:=\left\{\begin{array}{ll}
(X_2\f,\dots,X_n\f,\Wf\f,X_{2+n}\f,\dots,X_{2n}\f) & \text{if
}n\geq 2\\ \Wf\f & \text{if }n=1
\end{array}\right.\,.\end{array}
\end{equation}
According with the notation introduced in \ref{notation},
we also  denote by $\nabla^{\f}:=( \nabla_1^{\f},\dots,\nabla_{2n-1}^{\f})$ the family of vector fields on $\R^{2n}$ defined by $$\nabla^{\f}_{j}\f:=\left\{\begin{array}{ll}
X_{j+1}\f=\dfrac{\partial\f}{\partial z_j}-\dfrac{z_{j+n}}{2}\dfrac{\partial\f}{\partial t} & 
j=1,\dots,n-1\\
\Wf\f=X_{n+1}\f+\,\f\, T\f= \dfrac{\partial\f}{\partial z_n}-\f\dfrac{\partial\f}{\partial t} &  j=n\\
X_{j+1}\f=\dfrac{\partial\f}{\partial z_j}+\dfrac{z_{j+n}}{2}\dfrac{\partial\f}{\partial t} &  j=n+1,\dots,2n-1
\end{array}\right.$$

The following characterizations were proved in \cite{ascv,tesibigolin,BSC2}.
The definitions of broad* and distributional solution of the system $\gf\f=w$ are recalled in Section~\ref{S:pderecalls}.

\begin{theorem}
\label{miotheorem}
Let $\omega\subset \W\equiv\R^{2n}$ be an open set and let
$\f:\omega\rightarrow\R$ be a continuous function. Then
\begin{enumerate}
\item The set $S:=\F(\omega)$  is an $\h$-regular surface and
$\nu_S^{1}(P)<0$ for all $P\in S$, where
$\nu_S(P)=(\nu_S^{1}(P),\dots,\nu_S^{2n}(P))$ is the horizontal
normal to $S$ at $P$.
\end{enumerate}
is equivalent to each one of the following conditions:
\begin{enumerate}
\item[(ii)] There exists $w\in \mathrm C^0(\omega;\R^{2n-1})$ and a family
$(\f_\epsilon)_{\epsilon>0}\subset \mathrm C^1(\omega)$ such that, as
$\epsilon\to0^+$,
\begin{equation*}
 \f_\epsilon\rightarrow\f\quad\mbox{and}
 \quad\nabla^{\f_\epsilon}\f_\epsilon\rightarrow w\quad
 \text{in }\,\rmLinf_{\loc}(\omega)\,,
\end{equation*}
and 
$\nabla^{\phi}\phi=w$ in $\omega\,,$
 in sense of distributions.
 
\item[(iii)] There exists $w\in \mathrm C^0(\omega;\R^{2n-1})$ such that $\phi$ is a  {\it broad* solution} of the system $\gf\f=w$.

\item[(iv)] There exists $w\in \mathrm C^0(\omega;\R^{2n-1})$ such that $\f$ is a distributional solution of $\gf\f=w$.

\item[(v)] $\f$ is uniformly $\gf$-differentiable at $A$ for all $A\in \om$.
\end{enumerate}
\end{theorem}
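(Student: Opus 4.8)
The plan is to prove the equivalences as a single cycle
(i) $\Rightarrow$ (ii) $\Rightarrow$ (iv) $\Rightarrow$ (iii) $\Rightarrow$ (v) $\Rightarrow$ (i), reducing throughout to $n=1$: for the indices $j\neq n$ the operators $\nabla^{\f}_{j}$ are linear horizontal fields not involving $\f$, so their analysis is classical and the only genuine difficulty sits in the nonlinear component $\Wf\f$. For (i) $\Rightarrow$ (ii) I would start from a defining function $f\in\CH$ with $X_1 f>0$ and $S=\{f=0\}$, which exists by hypothesis and whose sign is equivalent to $\nu_S^1<0$. Mollifying, $f_\epsilon:=f\ast\rho_\epsilon\in C^\infty$ still satisfies $X_1 f_\epsilon>0$ on compact subsets for $\epsilon$ small, since $\gradh f$ is continuous; the Euclidean implicit function theorem (the smooth counterpart of Theorem~\ref{DiniTheorem}) then produces $C^1$ graphs $\f_\epsilon$ with $\F_\epsilon(\omega)=\{f_\epsilon=0\}$ and $\f_\epsilon\to\f$ locally uniformly. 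A direct computation expresses $\nabla^{\f_\epsilon}\f_\epsilon$ as $-(X_2 f_\epsilon,\dots,X_{2n}f_\epsilon)/X_1 f_\epsilon$ restricted to the graph, which converges in $\rmLinf_{\loc}(\omega)$ to the continuous field $w:=-(X_2 f,\dots,X_{2n}f)/X_1 f\!\restr{S}$; passing to the limit in the $C^1$ identity $\nabla^{\f_\epsilon}\f_\epsilon=w_\epsilon$ yields $\gf\f=w$ distributionally. The implication (ii) $\Rightarrow$ (iv) is then immediate, keeping the same $w\in \mathrm C^0(\omega;\R^{2n-1})$.

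The substantial analytic step is (iv) $\Rightarrow$ (iii), and I expect it to be the main obstacle. Given a continuous distributional solution with continuous source, I would construct characteristic curves in the nonlinear direction by solving $\dot\gamma=\f(z,\gamma(z))$; Peano's theorem applies because $\f$ is continuous, although uniqueness may fail. The heart of the matter is a Dafermos-type lemma showing that $\f$ is Lipschitz along each such characteristic $\Gamma(z)=(z,\gamma(z))$ and that $\tfrac{d}{dz}\f(\Gamma(z))=w(\Gamma(z))$; the difficulty is precisely to convert the Eulerian (distributional) identity into this pointwise statement along curves that are not unique and a priori only continuous. With $w$ continuous—the $\h$-regular case treated here—the reduction improves to hold on \emph{every} characteristic, and exhibiting a family of characteristics foliating the domain gives exactly the broad* property.

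For (iii) $\Rightarrow$ (v) I would use the broad* reduction to estimate the increment $\f(A)-\f(B)-L(B^{-1}\cdot A)$ appearing in Definition~\ref{defiWfdiff}, where the candidate $\W$-linear functional $L=d_\W\f(A_0)$ is assembled from the components of $w(A_0)$. Connecting two points $A,B\in I_r(A_0)$ first by a characteristic segment joining their vertical lines and then by the residual vertical segment, and integrating $w$ along this path, the continuity of $w$ together with the $1/2$-Hölder control in the transverse direction forces the quotient by $\df(A,B)$ (see~\eqref{fidistanzadef2708}) to vanish uniformly as $r\to0$; this is the uniform $\gf$-differentiability. Finally (v) $\Rightarrow$ (i) is the converse to Theorem~\ref{DiniTheorem}: uniform $\gf$-differentiability of $\f$ is exactly the intrinsic $C^1$ regularity of the graph, so one builds a defining function $f$ vanishing on $S=\F(\omega)$ and normalized by $X_1 f\equiv 1$, and checks that $\gradh f$ is represented by a continuous nonvanishing field, whence $f\in\CH$, $S$ is $\h$-regular, and $\nu_S^1=-X_1 f/|\gradh f|<0$. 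This closes the cycle and establishes all the equivalences.
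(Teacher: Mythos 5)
This paper does not actually prove Theorem~\ref{miotheorem}: it recalls it from \cite{ascv,tesibigolin,BSC2}, and what the paper itself establishes is the $\rmLinf$ analogue (Theorems~\ref{T:firsttheorem} and~\ref{T:othertheorem}) through Lemmas~\ref{L:DafLipschitz}, \ref{L:holderode}, \ref{L:distrLip} and Theorem~\ref{T:univselection}. Your cycle (i)$\Rightarrow$(ii)$\Rightarrow$(iv)$\Rightarrow$(iii)$\Rightarrow$(v)$\Rightarrow$(i) is essentially the route of those references (defining function plus mollification for (i)$\Rightarrow$(ii), a Dafermos-type reduction along characteristics for (iv)$\Rightarrow$(iii), the characteristic-plus-vertical-segment path for differentiability), and it parallels the machinery this paper builds in the $\rmLinf$ setting, so the architecture is sound. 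Two technical caveats in your first step, though: the defining function $f$ provided by $\h$-regularity is only \emph{local} (Theorem~\ref{DiniTheorem} is local), so producing a single family $(\f_\epsilon)\subset \mathrm C^1(\om)$ on all of $\om$ as required by (ii) needs a patching argument; and Euclidean mollification $f\ast\rho_\epsilon$ does not commute with the fields $X_j$, whose coefficients depend on $x$, while $Tf$ is an uncontrolled distribution for $f\in\CH$ --- one must use the group convolution as in \cite{frasersercas,ascv}.

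The genuine gap is in (iii)$\Rightarrow$(v). For vertical pairs $A=(z,t_1)$, $B=(z,t_2)$ one has $L(B^{-1}\cdot A)=0$ for every $\W$-linear $L$ (additivity on the center plus homogeneity $L(0,r^2t)=rL(0,t)$ force $L(0,t)=0$) and $\df(A,B)=|t_1-t_2|^{1/2}$, so uniform $\gf$-differentiability at $A_0$ requires
\begin{equation*}
\sup\big\{|\f(z,t_1)-\f(z,t_2)|\,/\,|t_1-t_2|^{1/2}\big\}\longrightarrow 0
\end{equation*}
as the pairs shrink to $A_0$. The ``$1/2$-H\"older control in the transverse direction'' you invoke (the analogue of Lemma~\ref{L:holderode}, fed by the Lipschitz bound of Lemma~\ref{L:DafLipschitz}) gives this quotient only \emph{bounded}, by $C\sqrt{\|w\|_\infty}$, not vanishing; as written, the step fails to produce the little-$o$ in \eqref{defdiL_fperW_fdiffunif}. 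The missing idea --- and it is the crux of \cite{ascv,BSC1} --- is to rerun the two-curve collision argument of Lemma~\ref{L:holderode} for $\f$ relative to its affine approximation: along characteristics through $I_r(A_0)$ the broad* identity gives $\frac{d}{ds}\big(\f(\Gamma(s))-w_n(A_0)s\big)=w_n(\Gamma(s))-w_n(A_0)$, whose supremum is the oscillation $\epsilon(r)$ of $w_n$ on $I_r(A_0)$; the collision argument then yields a vertical H\"older constant of order $\sqrt{\epsilon(r)}$, and it is only the continuity of $w$ (through $\epsilon(r)\to0$) that makes the differential quotient vanish uniformly. Without this refinement the broad* hypothesis yields Lipschitz and H\"older estimates but never uniform $\gf$-differentiability, and the cycle does not close.
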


\begin{remark}{\rm It follows that, for given $\f:\om\to\R$ such that $S=\F(\om)$ is $\h$-regular, then at any point $A\in\om$ the differetial $d_\W\f(A):\W\equiv\R^{2n}\to\R$ can be represented in terms of the intrinsic gradient $\gf\f(A)$. More precisely (see \cite{ascv,pinamonti})
$$d_\W\f(A)(z)=\left\{\begin{array}{ll} 
\gf\f(A)z & n=1 \\ \displaystyle\sum_{i=1}^{2n-1}\gf_i\f(A)z_i & n\geq 2 \end{array}\right.$$
}\end{remark}

\begin{definition}\label{D:gfderivative}
Let $\f:\om\to\R$ be a continuous function and let $A\in\om$ be given. We say that $\f$ admits $\gf_i$-derivative if there exists $\alpha\in\R$ such that for each integral curve $\Upsilon:(-\de,\de)\to\om$ of $\gf_i$ with $\Upsilon(0)=A$ $$\exists\ \dede{s}\f(0)=\lim_{s\to0}\dfrac{\f(\Upsilon(s))-\f(\Upsilon(0))}{s}=\alpha,$$
and $\alpha=\gf_i\f(A)$.
\end{definition}

\begin{remark}
Notice that if $S=\F(\om)$ is an $\h$-regular hypersurface then $\f$ admits $\gf_i$-derivative at $A$
for every $A\in\om$ for $i=1,\dots,2n-1$ (see \cite{ascv}, \cite{vitttesi}). 
\end{remark}

{\bf Introduction to the concern of this paper.}

Let us now introduce the concept of intrinsic Lipschitz function and intrinsic Lipschitz graph.

\begin{definition}
\label{D:deflip}
Let $\f:\om\subset\W\to\R$. We say that $\f$ is an intrinsic Lipschitz continuous function in $\om$ and write $\f\in \mathrm{ Lip}_\W(\om)$, if there is a constant $L>0$ such that
\begin{equation}
\label{deflip}
|\f(A)-\f(B)|\leq L\df(A,B)\qquad \forall A,B\in\om
\end{equation}
Moreover we say that $\f$ is a locally intrinsic Lipschitz function in $\om$ and we write 
$\f\in \mathrm{ Lip}_{\W,\loc}(\om)$ if $\f\in \mathrm{ Lip}_\W (\om)$ for every $\om'\Subset\om$.
\end{definition}

We remark that when $\phi$ is intrinsic Lipschitz, then there exists $C>0$ such that
\[
\frac{1}{C}\df(A,B) 
\leq
d_{\infty}(\Phi(A),\Phi(B))
\leq
C\df(A,B)
\qquad
\forall A,B\in\omega.
\]
In particular, the graph distance $d_{\phi}$ is also equivalent to the Carnot-Carath\'eodory distance restricted to the corresponding points on the graph of the Lipschitz intrinsic hypersurface. This means that $\phi$ is Lipschitz continuous also in the classical sense when evaluated on any fixed integral curve of the vector field $\Wf$, while it is $1/2$-H\"older on the lines where $t$ is fixed.

%
In \cite{pinamonti} is proved the following  characterization for intrinsic Lipschitz functions. 

\begin{theorem}\label{T:tesipinamonti}
Let $\om\subset\W$ be open and bounded, let $\f:\om\to\R$. Then the following are equivalent:
\begin{enumerate}
\item[(i)]  $\f\in \mathrm{ Lip}_{\W,\loc}(\om)$
\item[(ii)] there exist $\{\f_k\}_{k\in \N}\subset \mathrm C^{\infty}(\om)$ and $w\in (\rmLinf_{\loc}(\om))^{2n-1}$ such that $\forall \om'\Subset\om$ there exists $C=C(\om')>0$ such that
\begin{enumerate}
\item[(ii1)] $\{\f_k\}_{k\in \N}$ uniformly converges to $\f$ on the compact sets of $\om$;
\item[(ii2)] $|\nabla^{\f_k}\f_k(A)|\leq C \quad \mathcal L^{2n}$-a.e. $x\in \om',$  $k\in\N$;
\item[(ii3)] $\nabla^{\f_k}\f_k(A)\to w(A)\quad \mathcal L^{2n}$-a.e. $A\in \om$.
\end{enumerate}
\end{enumerate}
Moreover if (ii) holds, then $\gf\f(A)=w(A)$ $\mathcal L^{2n}$-a.e. $A\in \om$.
\end{theorem}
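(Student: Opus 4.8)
The plan is to establish the two implications and the concluding identity separately. For (i)$\Rightarrow$(ii) I would regularize the graph \emph{geometrically}, mollifying a defining function inside the group and reading off the smooth approximants through the implicit function theorem; for (ii)$\Rightarrow$(i) I would prove a uniform intrinsic Lipschitz estimate for the approximants and pass it to the limit. The one structural fact I would use throughout is that, by the very definition of intrinsic Lipschitz function (Definition~\ref{D:deflip} and the cone formulation of~\cite{frasersercaslip}), the graph $\F(\om)$ satisfies a \emph{uniform} intrinsic cone condition on each $\om'\Subset\om$; the linear components of $\nabla^\f$ are harmless and the whole difficulty concentrates, as in the rest of the paper, on the Burgers-type component $\Wf$.

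\textit{Construction for (i)$\Rightarrow$(ii).} From the cone condition I would produce a defining function $f:\Hn\to\R$ with $\F(\om)\subset\{f=0\}$ which is quantitatively monotone along $X_1$, namely $X_1 f\ge c>0$ and $|\gradh f|\le C\,X_1 f$ in a suitable weak sense, the constants being the aperture of the cone. I then mollify in the group, $f_\epsilon:=f\ast\rho_\epsilon$ with $\rho_\epsilon$ a smooth kernel: since the $X_j$ are left invariant one has $X_j f_\epsilon=(X_j f)\ast\rho_\epsilon$, so both inequalities survive unchanged while $f_\epsilon\in \mathrm C^\infty$. As $f_\epsilon$ is Euclidean-smooth with transversality in the $x_1$-direction encoded by $X_1 f_\epsilon>0$, the classical implicit function theorem (the smooth refinement of Theorem~\ref{DiniTheorem}) yields a genuinely smooth parametrisation $\f_\epsilon\in \mathrm C^\infty$ of $\{f_\epsilon=0\}$. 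By Theorem~\ref{miotheorem} the surface $\F_\epsilon(\om)$ is $\h$-regular, and the horizontal normal formula expresses $\nabla^{\f_\epsilon}\f_\epsilon$ through the ratios $X_j f_\epsilon/X_1 f_\epsilon$; the bound $|\gradh f_\epsilon|\le C\,X_1 f_\epsilon$ then forces $|\nabla^{\f_\epsilon}\f_\epsilon|\le C'$ on $\om'$ uniformly in $\epsilon$, which is (ii2). Finally $\F_\epsilon(\om)\to\F(\om)$ locally in Hausdorff distance, and uniqueness of the graph together with equicontinuity gives the locally uniform convergence $\f_\epsilon\to\f$ of (ii1).

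\textit{The main obstacle} is (ii3) together with the final identity: one must upgrade the mere uniform bound on $\nabla^{\f_\epsilon}\f_\epsilon$ from weak-$*$ convergence to genuine $\mathcal L^{2n}$-a.e. convergence, and identify the limit $w$ with the intrinsic gradient of $\f$. Here I would invoke the intrinsic Rademacher theorem of~\cite{frasersercaslip}: $\f$ is $\mathcal L^{2n}$-a.e. intrinsically differentiable, and at each such point the horizontal normals of the regular approximants $\F_\epsilon(\om)$ converge to the normal prescribed by $d_\W\f$. This pins down the a.e. limit $w$ as the pointwise intrinsic gradient and yields (ii3) along a subsequence. It is precisely the cone/monotonicity structure that converts weak-$*$ boundedness into pointwise convergence to the Rademacher derivative, and this is the delicate step.

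\textit{Implication (ii)$\Rightarrow$(i) and the identity.} For each smooth $\f_k$ the field $\nabla^{\f_k}\f_k$ is continuous, so $\F_k(\om)$ is $\h$-regular; hence $\f_k$ is Lipschitz along the integral curves of $\Wf[k]$ with constant $|\nabla^{\f_k}\f_k|\le C$, while the remaining, linear components of $\nabla^{\f_k}$ are directly controlled by $C$. Joining two points by a characteristic between their vertical lines followed by the residual vertical segment (the scheme of Lemma~\ref{L:DafLipschitz}) produces a uniform estimate $|\f_k(A)-\f_k(B)|\le L(C)\,d_{\f_k}(A,B)$ on $\om'$. Since $d_{\f_k}\to\df$ by the explicit continuous dependence on $\f_k$ in Definition~\ref{fidistanzadef2708}, and $\f_k\to\f$ locally uniformly, the estimate passes to the limit and gives $\f\in \mathrm{Lip}_{\W,\loc}(\om)$. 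For the concluding identity, uniform convergence forces $\f_k^2\to\f^2$ uniformly, so both the linear and the quadratic parts of $\nabla^{\f_k}\f_k$ converge, giving $\nabla^{\f_k}\f_k\to\gf\f$ in the sense of distributions; on the other hand the uniform bound and (ii3) give $\nabla^{\f_k}\f_k\to w$ in $\mathrm L^1_{\loc}$ by dominated convergence. Uniqueness of the distributional limit yields $\gf\f=w$ distributionally, and by the intrinsic Rademacher theorem this equals the pointwise intrinsic gradient $\mathcal L^{2n}$-a.e., which is the asserted $\gf\f=w$ a.e.
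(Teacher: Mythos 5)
First, a structural remark: the paper you were given does \emph{not} prove this theorem. It is imported from \cite{pinamonti} (``In \cite{pinamonti} is proved the following characterization\dots''), so there is no internal proof to compare yours against; your proposal has to be judged on its own merits, and it is in fact of the same geometric-approximation type as the cited work. The half of your argument that can be checked against this paper's toolbox is sound: for (ii)$\Rightarrow$(i), each smooth $\f_k$ is in particular a continuous distributional solution of $\nabla^{\f_k}\f_k=w_k$ with $\|w_k\|_{\rmLinf}\leq C$ on $\om'$, so Lemma~\ref{L:distrLip} (whose proof, via Lemmas~\ref{L:DafLipschitz}, \ref{L:DafLipschitzbis} and~\ref{L:holderode}, depends only on the $\rmLinf$ bound and nowhere uses the present theorem, so there is no circularity) gives $\f_k\in\mathrm{Lip}_{\W,\loc}(\om)$ with local constants depending only on $C$; since $\f_k\to\f$ locally uniformly one has $d_{\f_k}\to\df$ locally uniformly by the explicit formula in Definition~\ref{fidistanzadef2708}, and the estimate passes to the limit. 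Your derivation of the distributional identity $\gf\f=w$ from (ii1)--(ii3) by dominated convergence is also correct.

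The genuine gap is in (i)$\Rightarrow$(ii), at precisely the step you flag as delicate: (ii3) together with the identification of the limit. You assert that ``the cone/monotonicity structure converts weak-$*$ boundedness into pointwise convergence to the Rademacher derivative,'' but no argument is given, and the one you gesture at does not work as stated. Granting the defining function $f$ (which does follow from the cone characterization of \cite{frasersercaslip}), write $f_\epsilon:=\rho_\epsilon* f$ --- note, incidentally, that for left-invariant fields one has $X_j(f*\rho)=f*(X_j\rho)$, so to make the derivative fall on $f$ you must put the mollifier on the left, $f_\epsilon:=\rho_\epsilon* f$, a fixable but real slip. Then $\nabla^{\f_\epsilon}\f_\epsilon(A)$ is a ratio of averages of $\gradh f$ over $d_\infty$-balls of radius $\sim\epsilon$ centered at points lying on (or within $O(\epsilon)$ of) the graph $\F(\om)$. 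The graph is $\mathcal L^{2n+1}$-negligible in $\Hn$, so the Lebesgue-point theorem for mollifications of the $\rmLinf$ function $\gradh f$ --- which is what ``a.e.\ convergence of mollified normals'' amounts to --- gives no information at all at graph points: a null set may consist entirely of non-Lebesgue points. What is actually needed, and what constitutes the real content of the cited proof, is a computation expressing $\gradh f$ in a \emph{full neighborhood} of $\F(A_0)$ through difference quotients of $\f$ at base points near $A_0$, combined with the fact that intrinsic differentiability of $\f$ at $A_0$ (Theorem~\ref{T:rademacher}) controls those difference quotients uniformly on such a neighborhood, so that the $\epsilon$-averages converge to the value prescribed by $d_\W\f(A_0)$. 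Without this step your construction delivers (ii1), (ii2) and only weak-$*$ compactness of $\nabla^{\f_\epsilon}\f_\epsilon$, which is strictly weaker than (ii3) and does not yield the ``moreover'' clause.
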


Let us finally recall the following Rademacher type  Theorem, proved in \cite{frasersercaslip}.
\begin{theorem}\label{T:rademacher}
If $\f\in \mathrm{ Lip}_{\W}(\om)$ then $\f$ is $\gf$-differentiable for $\mathcal L^{2n}$-a.e $A\in\om$.
\end{theorem}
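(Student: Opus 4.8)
The plan is to produce, $\mathcal L^{2n}$-a.e., a candidate intrinsic gradient and then to verify the first-order expansion~\eqref{definWfdiffernziabilita} at almost every point. To isolate the genuinely nonlinear phenomenon I would first reduce to $n=1$: for $i\ne n$ the operators $\gf_i$ are the standard left-invariant horizontal fields $\partial_{z_i}\mp\tfrac{z_{i+n}}{2}\partial_t$ on $\W$, whose integral curves are Euclidean and \emph{horizontal}, so that along them $\df$ is comparable to the displacement and $\f$ is classically Lipschitz; these directions are thus controlled by the Euclidean Rademacher theorem together with a Fubini slicing, and the sole nonlinear direction is $\gf_n=\Wf$. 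For $n=1$ the subgroup $\W$ is abelian (the symplectic term vanishes on $\{x_1=0\}$), and a $\W$-linear functional is forced to have the form $L(z,t)=\alpha z$, because $t$ is homogeneous of degree $2$ under $\delta_r$ and cannot enter at degree $1$. Hence $\gf$-differentiability at $A_0=(z_0,t_0)$ amounts to producing $\alpha\in\R$ with
\[
\f(z,t)-\f(z_0,t_0)=\alpha\,(z-z_0)+o\bigl(\df(A_0,(z,t))\bigr)\qquad\text{as }(z,t)\to A_0.
\]

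First I would fix the candidate slope. By the remark following Definition~\ref{D:deflip}, along every integral curve $\Gamma(s)=(s,\gamma(s))$ of $\Wf$ the map $s\mapsto\f(\Gamma(s))$ is classically Lipschitz, hence differentiable for $\mathcal L^1$-a.e.\ $s$; this assigns the value $\Wf\f$ to a.e.\ point of each characteristic. Filling $\om$ by a measurable family of such characteristics and applying Fubini, I obtain a bounded measurable $w$ with $w=\Wf\f$ for $\mathcal L^2$-a.e.\ point, i.e.\ the a.e.\ intrinsic gradient already appearing in Theorem~\ref{T:tesipinamonti}. I would then work at a point $A_0$ that is simultaneously a Lebesgue point of $w$ along its characteristic and a point of classical differentiability of $\f$ on that characteristic, and set $\alpha:=w(A_0)$.

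To estimate the remainder, given a nearby $A=(z,t)$ I connect $A_0$ to $A$ in two steps: first along the characteristic $\Gamma$ through $A_0$ up to the abscissa $z$, reaching $A'=(z,\gamma(z))$, then along the vertical segment $\{z=\text{const}\}$ from $A'$ to $A$. On the first segment $\f(A')-\f(A_0)=\int_{z_0}^{z}\Wf\f(\Gamma(s))\,ds=\alpha(z-z_0)+o(|z-z_0|)$ by the Lebesgue-point property of $w$ along $\Gamma$. On the vertical segment the increment is governed by the off-characteristic displacement $t-\gamma(z)$, which, since $\f$ is continuous, differs from the quantity $t-t_0-\tfrac12(\f(A_0)+\f(A))(z-z_0)$ entering $\df$ only by $o(|z-z_0|)$; the intrinsic $1/2$-H\"older bound then gives $|\f(A)-\f(A')|\le L\,|t-\gamma(z)|^{1/2}\lesssim\df(A_0,A)$.

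The main obstacle is exactly this transversal estimate: as written it is only of size $O(\df)$, not $o(\df)$, so by itself it reproves intrinsic Lipschitz continuity rather than differentiability. Upgrading it to $o(\df)$ cannot rely on the H\"older bound alone, since characteristics issued near $A_0$ may genuinely diverge (Example~\ref{Ex:2}) and the vertical increment truly probes the transversal oscillation of $\f$. I would close the gap by a blow-up argument at the Lebesgue point $A_0$: rescaling $\f$ by the intrinsic dilations $\delta_r$ and by left translation along $\F(A_0)^{-1}$, the intrinsic Lipschitz and $1/2$-H\"older bounds yield equicontinuity and hence Arzel\`a--Ascoli compactness of the rescaled functions, while the Lebesgue-point property of $w$ forces every subsequential limit to be the single $\W$-linear map $z\mapsto\alpha z$. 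Translating this uniform convergence back into~\eqref{definWfdiffernziabilita} gives $\gf$-differentiability at $\mathcal L^2$-a.e.\ $A_0$, and the full statement follows by combining with the classical Rademacher theorem in the linear directions for $n\ge2$.
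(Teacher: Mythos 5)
First, a point of reference: the paper itself does \emph{not} prove Theorem~\ref{T:rademacher}; it is recalled without proof from \cite{frasersercaslip}, where it is obtained through an entirely different route (the subgraph of an intrinsic Lipschitz function has locally finite $\h$-perimeter, and one then invokes the De Giorgi-type blow-up/structure theory for reduced boundaries in $\Hn$). Your function-level blow-up is therefore a genuinely different, and in principle viable, strategy --- it is close in spirit to the Lagrangian/PDE point of view of this paper. However, as written it has a real gap exactly at its crux. The sentence ``the Lebesgue-point property of $w$ forces every subsequential limit to be the single $\W$-linear map $z\mapsto\alpha z$'' asserts the hard step without supplying any mechanism. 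What the Lebesgue-point property actually yields (after checking that $\mathcal L^{2n}$ is doubling with respect to $\df$-balls, so that a.e.\ point is a Lebesgue point, and that the rescaled sources converge in $\mathrm L^1_{\loc}$ so the distributional equation passes to the limit) is only this: every subsequential limit $\psi$ of the rescalings is an \emph{entire} intrinsic Lipschitz distributional solution of $\nabla^{\psi}\psi=\alpha$ with \emph{constant} source. Concluding that $\psi(z,t)=\alpha z$ is a Liouville-type rigidity theorem, and it is not automatic: as Examples~\ref{Ex:1} and~\ref{Ex:2} (which you yourself cite) show, the behaviour of a continuous solution along individual characteristics is \emph{not} determined by the $\mathcal L^{2n}$-a.e.\ class of the source --- characteristics can split or merge inside Lebesgue-null sets --- so one cannot simply say ``the source is constant, hence $\psi$ is affine along characteristics and the characteristics are parabolas.'' That rigidity statement is the entire content of the theorem at the blown-up scale; without it the proposal is not a proof.

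The gap is fillable with tools already in the paper, which is why the strategy deserves credit. The Dafermos computation of Lemma~\ref{L:DafLipschitz} is an integral argument that sees only the $\rmLinf$-class of the source; running it for the limit $\psi$ with source identically $\alpha$ gives the \emph{two-sided exact} identity $\psi(b,\gamma(b))-\psi(a,\gamma(a))=\alpha\,(b-a)$ along every characteristic $\dot\gamma(s)=\psi(s,\gamma(s))$, not merely a Lipschitz bound. Hence every characteristic of $\psi$ is a genuine parabola $\gamma(z)=\gamma(0)+\psi(0,\gamma(0))\,z+\alpha z^{2}/2$ on which $\psi$ is exactly affine; if $\psi(0,\cdot)$ were non-constant, two such parabolas with distinct initial velocities would intersect at a finite $z$, where continuity forces their distinct values of $\psi$ to coincide --- a contradiction. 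This gives $\psi(z,t)\equiv\psi(0,0)+\alpha z=\alpha z$ and closes the blow-up for $n=1$ (every $r\downarrow0$ sequence has a sub-subsequence converging to the same limit, so the full family converges, which is~\eqref{definWfdiffernziabilita}). Separately, your opening reduction to $n=1$ ``by Euclidean Rademacher plus Fubini in the linear directions'' is not a valid decomposition: intrinsic differentiability is a \emph{joint} expansion with remainder $o(\df)$, and a.e.\ differentiability along each family of coordinate curves does not recombine via Fubini into joint differentiability (the same pitfall as in the classical Euclidean Rademacher theorem). The clean repair is to run the blow-up in all $2n$ variables at once: the constant-source Liouville argument above handles the nonlinear field, and the linear fields are handled by the same exact Dafermos identity via Lemma~\ref{L:DafLipschitzbis}, so no separate ``combination'' step is ever needed.
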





\section{Different Solutions of the Intrinsic Gradient Differential Equation}
\label{S:pderecalls}

Even when $w\in \mathrm C^{0}(\omega)$, where $\omega$ is an open subset of $ \W\equiv\R^{2}$, the equation
\begin{equation}
\label{E:balance}
\phi_{z}(z,t)+ \left[\frac{\phi^{2}(z,t)}{2}\right]_{t}=w(z,t)\qquad{\rm in\ }\om
\end{equation}
allows in general for discontinuous solution. However, it is the case $n=1$ of the system~\eqref{defnablaphi2}
\begin{equation}\label{E:balancen}
\gf\f=w
\qquad 
\Leftrightarrow
\qquad
\begin{cases}
X_{j+1}\phi = w_{j}
&j=1,\dots,n-1,n+1,\dots,2n-1
\\
W^{\phi}\phi=w_n
&
\\
\end{cases}
\end{equation}
where $w\in\mathrm C^0(\om,\R^{2n-1})$ and this system, by Theorem~\ref{miotheorem}, describes an $\h$-regular surface $S:=\F(\omega)$ which is an $X_{1}$-graph.
Since we want to study in the present paper intrinsic Lipschitz graphs, then we do not require anymore the continuity of $w$ but we allow  $w\in\rmLinf(\omega;\R^{2n-1})$. Notwithstanding that, the continuity of $\phi$ remains natural.

There are a priori different notions of \emph{continuous} solutions $\phi:\omega\to\R$. We recall some of them in this section: distributional, Lagrangian, broad, broad*. 
All of them will finally coincide.

After giving in the present sections the definitions for all $n$, we will focus in the next one the analysis on the non-linear equation in the case $n=1$, which conveys the attention on the planar case~\eqref{E:balance}. We will remind this reduction by adopting often the variables $(s,\tau)$ instead of $(y,t)$. The generalization to other cases $n\geq2$ of most of the lemmas is straightforward, because the fields $X_j$ and $Y_j$ are linear. It is not basically in Lemma~\ref{L:gfunztx}, where we prefer taking advantage of the continuity of $\chi$; however, we have no reason to prove it in full generality.
%
%

We recall that in general solutions are not smooth, even if we assume the continuity---see e.g.~Example~\ref{E:fillholesNoGlobLip} below.
The equation is then interpreted in a distributional way.

\begin{definition}[Distributional solution]\label{D:distribsol}
A continuous function $\phi:\omega\to\R$ is a \emph{distributional solution} to~\eqref{E:balancen} if for each $\varphi\in \mathrm C^{\infty}_c(\omega)$
\begin{equation}
\int_{\omega}\f\, \gf_{j}\varphi\,d\mathcal L^{2n}=-\int_{\omega}w_j\varphi\,d\mathcal L^{2n},\qquad j=1,\dots,n-1,n+1,\dots 2n-1
\end{equation}
and \begin{equation*}
\int_{\omega}\left(\f\dede{z_n}\varphi+\frac{1}{2}\f^2\dede{t}\varphi\right)\,d\mathcal L^{2n}=\,-\int_{\omega}w_{n}\varphi\,d\mathcal L^{2n}.
\end{equation*}
\end{definition}


We consider now different versions for the Lagrangian formulation of the PDE. The first one somehow englobes a choice of trajectories for passing from Lagrangian to Eulerian variables, and imposes the evolution equation on these trajectories.

\begin{definition}[Lagrangian parameterisation]
\label{D:Lagrparam}
A family of \emph{partial Lagrangian parameterisations} associated to a continuous function $\phi:\omega\to\R$ and to the system~\eqref{E:balancen} is a family of couples $(\tilde\omega_i,\chi_i)$ ($i=2,\dots,2n$) with $\tilde\omega_i\subset\R^{2n}$ open sets and $\chi_i=\chi_i(\xi,t)=\chi_i(\xi_i,\hat\xi_i,t):\tilde \omega_i\to \R$ Borel functions such that  for each $i=1,\dots,2n-1$
\begin{itemize}
\item[(L.1):] the map $\Upsilon_i:\tilde\om_i\to\R^{2n}$,
$\Upsilon_i(\xi,t)= (\xi,\chi_i(\xi,t))$ is valued in $\omega$;
\item[(L.2):] for every $\xi\in\R^{2n-1}$, the function
$\tilde\om_{i,\xi}\ni t\mapsto\chi_i(\xi,t)$ is nondecreasing;
\item[(L.3):] for every $\hat\xi_i\in\R^{2n-2},\ t\in\R$, for every $(s_1,s_2)\subset\tilde\om_{i,\hat\xi_i,t}$, the function $(s_1,s_2)\ni s\mapsto\Upsilon_i(s,\hat\xi_i,t)$ is absolutely continuous and 
\begin{equation}
\label{E:generalODE}
\dede{s}\chi_i(s,\hat\xi_i,t)=\gf_i(\Upsilon(s,\hat\xi_i,t))\quad{\rm a.e.\ }s\in(s_1,s_2).
\end{equation}
\end{itemize}
We call it a family of \emph{(full) Lagrangian parameterisations} if $\chi_i:\tilde\om_\xi\to\om_\xi$ is onto the section $\omega_\xi$ for all $\xi$.
\end{definition}

We remark again that we emphasized in this definition the nonlinear PDE of the system: a Lagrangian parameterisaiton provides a covering of $\omega$ by characteristic lines for that equation. Indeed, a covering by characteristic lines of the other equations is immediately given by an expression like
\[
\chi_i(z_1,\dots,z_{2n-1},t)
= 
\begin{cases}
t-\tfrac{z_{i+n}}{2} z_i & i=1,\dots,n-1\\
t+\tfrac{z_{i-n}}{2} z_i & i=n+1,\dots,2n-1 
\end{cases}\]
Moreover, the reduction along characteristics for the linear equations, and thus the equivalence between Lagrangian and distributional solution, holds with less technicality.

\begin{definition}
A family of (partial) parameterisations $(\tilde\omega_i, \chi_i)$ extends the family of (partial) parameterisations $(\tilde {\omega}_i', \tilde {\chi}_i')$, we denote $(\tilde {\omega}'_i, \tilde {\chi}'_i)\preceq(\tilde\omega_i,\chi_i)$, if there exists a family of Borel injective maps
\[
J_i:\tilde\omega'_i\ni(z,\tau) \mapsto (z, j(z,\tau))\in\tilde\omega_i
\qquad\text{such that}\qquad
\chi_i\circ J_i= \tilde\chi'_i\quad\forall\ i=1,\dots,2n-1.
\]
When $(\tilde\omega'_i, \tilde\chi'_i)\preceq(\tilde\omega_i,\chi_i)$ and $(\tilde\omega_i,\chi_i)\preceq(\tilde\omega'_i, \tilde\chi'_i)$ they are called equivalent. 
\end{definition}

\begin{remark}
The notion of Lagrangian parameterisation given above does not consist in a different formulation for the notion of regular Lagrangian flow in the sense by Ambrosio-Di Perna-Lions (see~\cite{Crippa} for an effective presentation). Particles are really allowed both to split and to join, therefore in particular the compressibility condition here is not required, while instead we have a monotonicity property.
\end{remark}

\begin{notation}{\rm
When we need to distinguish letters, we denote with $\bar\cdot$ functions defined on $\omega$ but possibly related to a parameterisation, with $\tilde\cdot$ functions defined on $\tilde \omega$, and with $\hat\cdot$ functions defined on $\omega$ not related to specific parameterisations.}
\end{notation}

Notice that a full Lagrangian parameterisation is continuous in the two variables for free: indeed, e.g.~for $n=1$, by monotonicity one has that for each $s$
\[
\sup_{\tau' < \tau} \chi(s,\tau')\leq \chi(s,\tau)\leq \inf_{\tau' >\tau}\chi(s,\tau').
\]
By the surjectivity then equality must hold. Considering the Lipschitz continuity in the other variables one gains the joint continuity in $(s,\tau)$.
The same holds for $n>1$ provided that $\chi$ is continuous on the hyperplane $z_n=0$, since the argument above gives the continuity only on the planes where $\hat z_n$ is constant. We do not mind about continuity in $\hat z_n$.

Before giving the notion of Lagrangian solution, we recall that a set $A\subset\R^{n}$ is \emph{universally measurable} if it is measurable w.r.t.~every Borel measure. Universally measurable sets constitute a $\sigma$-algebra, which includes analytic sets. A function $f:\R^n\to\R$ is said universally measurable if it is measurable w.r.t.~this $\sigma$-algebra. In particular, it will be measurable w.r.t.~any Borel measure.
\\
Notice that Borel counterimages of universally measurable sets are universally measurable. Then the composition $\varphi\circ\psi$ of any universally measurable function $\varphi$ with a Borel function $\psi$ is universally measurable. This composition would be nasty if $\varphi $ were just Lebesgue measurable.
\\
Since restrictions of Borel functions on Borel sets are Borel, all the terms in the following definition are thus meaningful.

\begin{definition}[Lagrangian solution]
\label{D:Lagrsol}
A continuous function $\phi:\om\to\R$ is a \emph{Lagrangian solution} of~\eqref{E:balancen} if there exists a family of Lagrangian parameterisations $(\tilde\omega_i, \chi_i)$ ($i=1,\dots,2n-1$), associated to $\phi$ and \eqref{E:balancen}, and a family of universally measurable functions $\bar w_{\chi_i}:\omega\to\R$ ($i=1,\dots,2n-1$), such that $\forall\hat\xi_i\in\R^{2n-2}$, $\forall t\in\R$, $\forall(s_1,s_2)\subset\tilde\om_{\xi_i,t}$, it holds that
\begin{itemize}
\item[(LS1):] the function $(s_1,s_2)\ni s\mapsto\phi(\Upsilon(s,\xi_i,t))$ is absolutely continuous and
\begin{equation}
\label{E:evolutionphi}
\dede{s}\f(\Upsilon_i(s,\hat\xi_i,t))=\bar w_{\chi_i}(\Upsilon_i(s,\hat\xi_i,t))
\end{equation}
where $\Upsilon$ is given in {\rm (L.1)} of definition \ref{D:Lagrparam}.
\item[(LS2):] If $\f$ admits $\gf_i$-derivative at $A\in\om$, then $\bar w_{\chi_i}(A)=\gf_i\f(A)$ $\forall\ i=1,\dots,2n-1$.
\end{itemize}
\end{definition}

We are going to prove in Section~\ref{Ss:lagrdistr} that $\phi$ above is a distributional solution to~\eqref{E:balancen} provided that the function $\bar w_{\chi} \in \mfrl(\omega;\R^{2n-1})$ is a pointwise representative of the source term $w\in\rmLinf(\omega, \Ll^{2n})$.
The fact that we do not require in the definition that $\bar w_{\chi} $ and $w$ represent the same function is just a notational convenience for the proofs below.

We give now the strongest notion of solution: the evolution equation is imposed on every trajectories.

\begin{definition}[Broad solution]
\label{D:broadsol}
A continuous function $\phi:\omega\to\R$ is a \emph{broad solution} of~\eqref{E:balancen} if there exists a universally measurable function $\hat w=(\hat w_1,\dots,\hat w_{2n-1})\in \mfrl(\omega;\R^{2n-1})$ such that
\begin{itemize}
	\item[(B.1):] $\hat w=w$ $\mathcal L^{2n}$-a.e. in $\om$;
	\item[(B.2):] $\forall\ i=1,\dots,2n-1$, for every integral curve $\Gamma_i\in \mathrm C^{1}((-\de,\de); \omega)$ of the vector field $\nabla^{\phi}_{i} $ it holds that $(-\delta,\delta)\ni s\mapsto\f(\Gamma_i(s))$ is absolutely continuous and
$$\dede{s}\f(\Gamma_i(s))=\hat w(\Gamma_i(s))\qquad {\rm a.e.}\ s\in (-\de,\de);$$
\item[(B.3):] if $\f$ admits $\gf_i$-derivative at $A\in\om$, then $\hat w_i(A)=\gf_i\f(A)$ $\forall i=1,\dots,2n-1$.
\end{itemize}
\end{definition}

We also remind the intermediate notion of broad* solution introduced in~\cite{BSC1}.

\begin{definition}[Broad* solution]
\label{D:broad*sol}
A continuous function $\phi$ is a \emph{broad* solution} of~\eqref{E:balancen} if for every $A_0\in\omega$ there exists a map, that we will call exponential map, 
\begin{equation}\label{exponential}
\exp_{A_0}(\nabla^\phi_i)(\cdot):[-\de,\de]\times{I_{\de}(A_0)}\rightarrow{I_{\delta_1}(A_0)} \Subset\omega\,,
\end{equation}
where $0<\de<\delta_1$ such that, if $\Gamma_i^A(s)=\exp_{A_0}(s\nabla^{\phi}_i)(A)$ for $i=1,\dots,2n-1$, then
\begin{itemize}
	\item[(E.1):] $\displaystyle{\Gamma_i^A\in\,\mathrm C^1([-\de,\de];\,\R^{2n})}\,,$
	\item[(E.2):] $\displaystyle{\left\{ \begin{array}{ll}
	\dot\Gamma_i^A= {\nabla^{\f}_{i}\circ\Gamma_i^A} \\
	\Gamma_i^A(0)=A
	\end{array}
	\right.\,,}$
	\item[(E.3):] there exists a universally measurable function $\bar w\in \mfrl(\omega;\R^{2n-1})$ such that $\bar w=w$ $\mathcal L^{2n}$-a.e. in $\om$, $\forall\ i=1,\dots,2n-1$ the map $(-\delta,\delta)\ni s\mapsto\f\left(\Gamma^A_i(s)\right)$ is absolutely continuous and
$$\dede{s}\f\left(\Gamma^A_i(s)\right)=\bar w\left(\Gamma^A_i(s)\right)\qquad {\rm a.e.}\ s\in (-\de,\de)\qquad \forall A\in I_\de(A_0);$$
\item[(E.4):] if $\f$ admits $\gf_i$-derivative at $A\in\om$, then $\bar w_i(A)=\gf_i\f_i(A)$ $\forall i=1,\dots,2n-1$.
\end{itemize}
\end{definition}

Being a distributional solution to the PDE, a Lagrangian solution will be in particular a broad* solution with $w:=\bar w$.
Viceversa, if the exponential map related to a broad solution satisfies the relative monotonicity property, then one can prove that the broad* solution is also a Lagrangian solution, with the same $\bar w$. One can moreover derive a procedure for constructing a Lagrangian parameterisation in Lemma~\ref{L:globchi}, where the curves $\gamma(s;\bar s,\bar t)$ should be replaced by the ones of the exponential map.

\section{Existence of Lagrangian parameterisations}
\label{S:existencepartialpar}
As mentioned, we can focus in this section on the case $n=1$. We will give information about the generalization to the case $n\geq2$ in Remark~\ref{R:remarkdimnsez4}.

Given a continuous function $\phi$ on $\clos(\omega)\subset\W$, we defined what we mean by `Lagrangian parameterisation'. Here we show that the definition is non-empty.
It is part of the very classical theory on ODEs with continuous coefficients covering $\omega$ with integral curves of the continuous vector field $(1,\phi)$, see for example the first chapters of~\cite{hartman}. 
In the definition we of Lagrangian parameterisation it is however essential that we cover the region with curves which are ordered and not with generic ones:
we therefore briefly remind below one way of doing it, with also the aim being self-contained.

Firstly, in Lemma~\ref{L:solODE} we show, for familiarising with notations, that if one takes the integral curves through an $s$-section of $\omega$ which are minimal, in the sense that any other curve through that point lies on its right side, then we get a partial Lagrangian parameterisation, as well as it happens for the maximal ones.
Extending it to a full one will be matter of the next section.
A  full Lagrangian parameterisation basically amounts to an order preserving parameterisation, with a \emph{real} valued parameter, of non-crossing curves through each point of the plane.
We immediately give an example of a full one (Lemma~\ref{L:globchi} below). 

Notice that the definition of Lagrangian parameterisation concerns only classical theory on ODEs with continuous coefficients, as therefore Lemmas~\ref{L:solODE},~\ref{L:globchi},~\ref{L:gfunztx},~\ref{L:holderode},~\ref{L:fillinODE}.

\begin{lemma}
\label{L:solODE}
Let $\phi:\mathrm{clos}(\omega)\to\R$ be a continuous function, $(0,0)\in\omega$.
Then there are domains $\tilde \omega_{m}$, $\tilde \omega_{M}$ associated to the functions
\begin{align*}
\chi_{m}(s,\tau)&:=\min\Big\{\gamma(s):\ (r,\gamma(r))\in\clos(\omega),\ \dot\gamma(r)=\phi(r,\gamma(r)),\ \gamma(0)=\tau \Big\}
\\
\chi^{}_{M}(s,\tau)&:=\max \Big\{\gamma(s):\ (r,\gamma(r))\in \clos(\omega),\ \dot\gamma(r)=\phi(r,\gamma(r)),\ \gamma(0)=\tau \Big\}
\end{align*}
for which $(\tilde \omega_{m}, \chi_{m} )$, $(\tilde \omega_{M}, \chi_{M} )$ are partial Lagrangian parameterisaiton relative to $\phi$.
\end{lemma}
\begin{proof}
For every $(\bar s,\bar \tau)\in\omega$ one could consider the minimal and maximal curves satisfying on $\clos(\omega)$ the ODE for characteristics \eqref{E:generalODE} and passing through that point: indeed the functions
\begin{subequations}
\label{E:gammamM}
\begin{align}
\label{E:gammamM1}
\gamma_{(\bar s,\bar \tau)}(s)&:= \min\Big\{\gamma(s):\ (r,\gamma(r))\in\clos(\omega),\ \dot\gamma(r)=\phi(r,\gamma(r)),\ \gamma(\bar s)= \bar \tau\Big\}
\\
\gamma^{(\bar s,\bar \tau)}(s)&:= \max\Big\{\gamma(s):\ (r,\gamma(r))\in\clos(\omega),\ \dot\gamma(r)=\phi(r,\gamma(r)),\ \gamma(\bar s)= \bar \tau\Big\}
\end{align}
\end{subequations}
are well defined, Lipschitz, and because of the continuity of $\phi$ they are still integral curves (\cite{hartman}).

Denoting by $\ri$ the relative interior of a set, we define the domain
\[
	\tilde\omega_{m}=\ri\big\{(s,\tau)\in \R\times\omega_{0}:\ \tau \in \omega_{0},\ (s,\gamma_{(0,\tau)}(s))\in\omega\big\}.
\]
where recall that $\omega_o=\{t\in\R: (0,t)\in\omega\}$.
Since we are assuming that $\omega$ contains the origin, $\tilde\omega_{m} $ is nonempty.
The domain $\tilde\omega_{M}$ is analogous.

Being $\gamma_{(\bar s,\bar \tau)}(s)$, $\gamma^{(\bar s,\bar \tau)}(s)$ Lipschitz solutions to the ODE with continuous coefficients, the functions $\chi_{m}(s,\tau),\chi_{M}(s,\tau)$ in the statement are $\mathrm C^{1}(\omega\cap\{t=\tau\})$ in the $s$ variable for every $\tau$ fixed. 
$\chi_{m}, \chi^{M}$ are jointly Borel in $(s, \tau)$ by continuity in $s$ and monotonicity in $\tau $, monotonicity that now we show.

Notice the semigroup property: for $t,h>0$, for example for~\eqref{E:gammamM1},
\[
\gamma_{(0,\tau)}(s)=:\overline \tau,
\quad
\gamma_{(s, \overline \tau)}(h)=: \bar {\overline \tau},
\quad
\Longrightarrow
\quad
\gamma_{(0, \tau)}(s+h)=\bar {\overline \tau}.
\]
This yields the monotonicity of $\chi_{m}(s,\tau)=\gamma_{(0,\tau)}(s)$ for each $s$ fixed. Indeed, if $\tau_{1}< \tau_{2}$ and $\gamma_{(0, \tau_{1})}(\hat s)\geq \gamma_{(0, \tau_{2})}(\hat s)$ at a certain $\hat s>0$, by the continuity of the curves there exists $0<\bar s\leq \hat s$ when $\gamma_{(0, \tau_{1})}(\bar s)= \gamma_{(0, \tau_{2})}(\bar s)$.
But then the curve
\[
\gamma(s)=
\begin{cases}
\gamma_{(0, \tau_{1})}(s)
& \text{for $s\leq \bar s$}
\\
\gamma_{(0, \tau_{2})}(s)
& \text{for $s\geq \bar s$}
\end{cases}
\]
is a good competitor for the definition of $\gamma_{(0, \tau_{1})}$, which implies
\[
\gamma_{(0, \tau_{2})}(\hat s)= \gamma(\hat s)\geq  \gamma_{(0, \tau_{1})} (\hat s),
\]
and therefore equality.
For $\gamma^{(0, \tau)}$ the argument is similar.
\end{proof}

In Lemma~\ref{L:fillinODE} we show how to make a partial parameterisation $\chi$ surjective: thus we cover $\omega$ by a family of characteristic curves which includes the ones of $\chi$.
In the following lemma, w.l.o.g.~in a simpler setting, we provide instead a full Lagrangian parameterisation, defined at once instead of extending a given one.

\begin{lemma}
\label{L:globchi}
There exists a full Lagrangian parameterisation associated to a continuous function $\phi:(0,1)^{2}\to \R$ which is also continuous on the closure $[0,1]^{2}$.
\end{lemma}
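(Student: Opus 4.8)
The plan is to construct a full Lagrangian parameterisation $(\tilde\omega,\chi)$ directly on the square, taking advantage of the continuity of $\phi$ up to the closed boundary $[0,1]^2$, which guarantees a uniform bound $\|\phi\|_\infty=:M<\infty$ and hence that all integral curves of $(1,\phi)$ have slope bounded by $M$. Since we work with $n=1$, we need a single function $\chi(s,\tau):\tilde\omega\to\R$ which, for each fixed $\tau$, solves $\partial_s\chi(s,\tau)=\phi(s,\chi(s,\tau))$ in $s$, which is nondecreasing in $\tau$ for each $s$, and which is onto the section $\omega_s=(0,1)$ for each $s\in(0,1)$. The natural idea is to start from one of the monotone families produced in Lemma~\ref{L:solODE}, say the minimal curves $\chi_m$ based at the section $\{s=0\}$, but these only cover points reachable from that single section; to get surjectivity on \emph{every} vertical section I would instead index characteristics by the value they attain at a fixed section and then reparameterise.

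Concretely, first I would fix the base section $s=1/2$ (interior, to avoid boundary degeneracy) and, for each $\tau\in\R$, let $\gamma_\tau(\cdot)$ be the minimal solution of $\dot\gamma=\phi(r,\gamma(r))$ on $[0,1]$ with $\gamma(1/2)=\tau$, as in \eqref{E:gammamM1} of Lemma~\ref{L:solODE}. By the monotonicity argument already carried out there, $\tau\mapsto\gamma_\tau(s)$ is nondecreasing for each fixed $s$, and each $\gamma_\tau$ is Lipschitz with constant $M$. The key point is surjectivity: for a fixed $s\in(0,1)$ and any target value $t\in(0,1)$, I claim there is some $\tau$ with $\gamma_\tau(s)=t$. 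This follows from a continuity-plus-monotonicity (intermediate value) argument: the map $\tau\mapsto\gamma_\tau(s)$ is monotone and its range covers all of $(0,1)$ because by choosing $\tau$ near the top (resp.\ bottom) of the section one can push the curve's value at $s$ arbitrarily high (resp.\ low) within the square, using that the solutions depend continuously and that vertical translations of the base value shift the whole curve monotonically. I would then set $\chi(s,\tau):=\gamma_\tau(s)$ on the domain
\[
\tilde\omega=\ri\{(s,\tau)\in\R^2:\ s\in(0,1),\ \gamma_\tau(s)\in(0,1)\},
\]
verifying (L.1)--(L.3) exactly as in Lemma~\ref{L:solODE} (the ODE (L.3) holds by construction, monotonicity (L.2) is the displayed claim, and (L.1) is the defining condition of $\tilde\omega$), and checking the surjectivity clause $\chi(\cdot,\cdot):\tilde\omega_s\to\omega_s$ onto from the intermediate-value step.

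The main obstacle I expect is surjectivity together with the joint continuity needed to make $\chi$ a genuine \emph{full} parameterisation rather than merely a monotone Borel family. The nonuniqueness of solutions for continuous (non-Lipschitz) $\phi$ means distinct base values $\tau_1<\tau_2$ can yield curves that coincide on an interval, so $\tau\mapsto\gamma_\tau(s)$ is only nondecreasing, not strictly increasing, and may have flat parts; conversely it may jump, so its range could miss values of $t$. To close the surjectivity gap at a putative jump I would insert the missing characteristics by hand, exploiting that between the minimal and maximal solutions through a point the whole funnel of solutions is available (again by the classical theory in~\cite{hartman}), and choosing a monotone selection through the gap. Once surjectivity holds, joint continuity is essentially free: the remark preceding the lemma shows that monotonicity in $\tau$ plus surjectivity forces continuity in $\tau$ via the squeeze $\sup_{\tau'<\tau}\chi(s,\tau')\le\chi(s,\tau)\le\inf_{\tau'>\tau}\chi(s,\tau')$, while Lipschitz dependence on $s$ gives continuity there, so the two combine into joint continuity on $\tilde\omega$. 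I would therefore organise the proof as: (i) define the candidate family from minimal curves based at an interior section; (ii) establish monotonicity and the ODE; (iii) prove surjectivity on each section by an intermediate-value argument, filling any gaps using the solution funnel; and (iv) deduce joint continuity from the squeeze, the hardest and most delicate step being (iii).
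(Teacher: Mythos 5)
Your step (iii) contains the two genuine gaps, and they are exactly where the paper spends its effort. The first is that you skip the boundary reduction. Since the curves of \eqref{E:gammamM1} are required to stay in $\clos(\omega)$, a single interior base section cannot see the whole square: already for $\phi\equiv 1$ the only characteristic through $(0.1,\,0.99)$ is $\gamma(s)=0.99+(s-0.1)$, which leaves $[0,1]^{2}$ at $s=0.11$, long before reaching $s=1/2$; no amount of jump--filling at the base section can recover such points. The paper's Step~1 fixes this by extending $\phi$ to a function compactly supported in $[0,1]\times(-1,2)$, so that \emph{every} point of the square lies on a characteristic defined on all of $[0,1]$, and only at the end restricts the parameterisation back to the square.

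The second, more serious, gap is that the funnel-filling device does not work as described. At a left-discontinuity $\tau_{0}$ of the monotone map $\tau\mapsto\gamma_{\tau}(s_{0})$, the missing values at the section $s_{0}$ lie \emph{below} $\gamma_{\tau_{0}}(s_{0})$, whereas the funnel of solutions through the single point $(1/2,\tau_{0})$ lies above its minimal solution, i.e.\ on the wrong side; the gap points sit on characteristics meeting the base section at parameters $\tau'<\tau_{0}$. To cover them you must choose one characteristic through every gap point in a globally non-crossing way --- and solutions of a non-Lipschitz ODE through distinct points can genuinely cross, so ``choosing a monotone selection through the gap'' is not an off-the-shelf step: it is the original problem reproduced inside each gap. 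Moreover gaps can occur at every section $s$, so a section-by-section insertion needs an iteration scheme plus a limiting argument; this is precisely what the Appendix (Lemma~\ref{L:fillinODE}) does over dyadic sections, recovering surjectivity at the remaining sections only in the limit, and Example~\ref{E:fillholesNoGlobLip} shows the outcome can even fail to be Lipschitz. The paper's proof of Lemma~\ref{L:globchi} avoids the iteration by a global device you are missing: it takes the spliced curves $\gamma(\cdot;\bar s,\bar t)$ (minimal forward, maximal backward) through \emph{every} point of the square, which are pairwise ordered by construction; it then shows the closure $\mathcal C^{*}$ of this family in $\mathrm C^{0}([0,1])$ is compact, still totally ordered, still made of characteristics, and \emph{connected}; finally the order-preserving evaluation $\theta(\gamma)=\sum_{k}2^{-k}\gamma(r_{k})$ is a homeomorphism of $\mathcal C^{*}$ onto the interval $[0,2]$, and $\chi(s,\tau):=\theta^{-1}(\tau)(s)$ is the full parameterisation. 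Connectedness of $\mathcal C^{*}$ is what replaces your intermediate-value argument and delivers surjectivity; your step (iv) --- joint continuity from monotonicity plus surjectivity --- is correct, but it is the only part of the plan that survives.
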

\begin{proof}%
{\sf Step $1$.} 
Let $\phi$ be a continuous function on $[0,1]^{2}$, we want to give a Lagrangian parameterisation for his restriction to $(0,1)^{2}$ as we defined it on an open set. One can assume $\phi$ is compactly supported in $(s,\t)\in [0,1]\times (0,1)$.
If not, one can extend it to a compactly supported function $\bar \phi$ on $[0,1]\times (-1,2)$: restricting the Lagrangian parameterisation $(\tilde \omega^{\bar\phi},\chi^{\bar\phi})$ for $\bar \phi$, defined as described below, to the open set
\[
\tilde \omega := \{(s,\tau)\in \tilde \omega^{\bar\phi}:\ \chi^{\bar\phi}(s,\tau)\in (0,1) \},
\] 
one will get a Lagrangian parameterisation for $\phi$.
The assumption of $\t$ compactly supported in $(0,1)$ implies that there are two characteristics, one starting from $(0,0)$ and one from $(0,1)$, which satisfy $\dot \gamma(s) = \phi(s,\gamma(s))\equiv 0$. This means respectively $\gamma(s)\equiv 1$ and $\gamma(s)\equiv 0$ for each $s\in[0,1]$.

{\sf Step $2$.} 
After this simplification, we associate to each point $(\bar s, \bar t)\in[0,1]^{2}$ a curve $\gamma(s;\bar s, \bar t)$ which is minimal forward in $s$, maximal backward:
\[
\gamma(s;\bar s, \bar t)=
\begin{cases}
\gamma_{(\bar s, \bar t)}(s)
&\text{$s\geq \bar s$,}
\\
\gamma^{(\bar s, \bar t)}(s)
&\text{$s<\bar s$.}
\end{cases}
\]
where $\gamma_{(\bar s, \bar t)}(s)$, $\gamma^{(\bar s, \bar t)}(s)$ were defined in~\eqref{E:gammamM}.
Notice that the curves $\gamma(\cdot;\bar s,\bar t)$ are defined on the whole $[0,1]$. Let us denote by
\[ 
\mathcal C:=\{\gamma(\cdot;\bar s,\bar t):\ [0,1]\to[0,1],\ (\bar s,\bar t)\in[0,1]^2\}.
\]
We will endow $\mathcal C$ by the topology of uniform convergence on $[0,1]$ and the following total order relation
\begin{equation}\label{E:lemma4.2eq1}
\big\{\gamma(\cdot; s_{1}, t_{1})\} \preceq \big\{\gamma(\cdot; s_{2}, t_{2}) \big\}
\qquad\Longleftrightarrow\qquad
\gamma(s; s_{1}, t_{1}) \leq \gamma(s; s_{2}, t_{2})\quad \forall s\in[0,1].
\end{equation}
Let us denote by $\mathcal C^*$ the closure of $\mathcal C\subset C^0([0,1])$ endowed with the topology of uniform convergence.
We prove in the rest of this step the following claims:
\begin{equation}\label{E:lemma4.2eq2}
\mathcal C^*\text{ is compact};
\end{equation}
\begin{equation}\label{E:lemma4.2eq3}
\text{the total order relation \eqref{E:lemma4.2eq3} still applies in }\mathcal C^*;
\end{equation}
\begin{equation}\label{E:lemma4.2eq4}
\mathcal C^*\text{ is connected};
\end{equation}
\begin{equation}\label{E:lemma4.2eq5}
\mathcal C^* \text{ is still a family of characteristic curves for }\phi,
\end{equation}
i.e. for each $\gamma\in\mathcal C^*$ $\dot\gamma(s)=\phi(s,\gamma(s))$ $\forall\ s\in[0,1]$.

{Proof of \eqref{E:lemma4.2eq2}}.
Since $\mathcal C$ is a family of equi-Lipschitz continuous and bounded functions of $C^0([0,1])$, then, from Arzel\`a-Ascoli's theorem $\mathcal C^*$ is compact. 

Proof of \eqref{E:lemma4.2eq3}.
Let $\gamma,\tilde\gamma\in\mathcal C^*$ and let us prove that $\gamma\preceq\tilde\gamma$ or $\tilde\gamma\preceq\gamma$. By definition there are two sequences $\{\gamma_h\}_h,\{\tilde\gamma\}_h\subset\mathcal C$ such that $ \gamma_h\to\gamma$ and $\tilde\gamma_h\to\tilde\gamma$ uniformly on $[0,1]$. Assume $\gamma\neq\tilde\gamma$, then there is $s_0\in[0,1]$ such that 
\[
\tilde\gamma(s_0)<\gamma(s_0)\quad{\rm or}\quad\gamma(s_0)<\tilde\gamma(s_0).
\]
For instance, let $\tilde\gamma(s_0)<\gamma(s_0)$ and let us prove that
\begin{equation}\label{E:lemma4.2eq6}
\tilde\gamma(s)<\gamma(s)\quad\forall\ s\in[0,1].
\end{equation}
Let $0<\epsilon<\dfrac{\gamma(s_0)-\tilde\gamma(s_0)}{2}$, there exists $\bar h=\bar h(\epsilon)$ such that
\begin{equation}\label{E:lemma4.2eq7}
|\gamma(s)-\gamma_h(s)|<\epsilon\ {\rm and}\ |\tilde\gamma(s)-\tilde\gamma_h(s)|<\epsilon\quad\forall\ s\in[0,1],\ \forall h<\bar h.
\end{equation}
From \eqref{E:lemma4.2eq7} it follows that
\[
\tilde\gamma_h(s_0)<\tilde\gamma(s_0)+\epsilon<\gamma(s_0)-\epsilon<\gamma_h(s_0),\quad\forall h>\bar h.
\]
Because $\tilde\gamma_h$ and $\gamma_h$ are ordered, it follows that $\tilde\gamma_h(s)\leq\gamma_h(s)$ $\forall\ s\in[0,1]$, $\forall\ h>\bar h$. Passing to the limit as $h\to\infty$ in the previous inequality, we get \eqref{E:lemma4.2eq6}.

Proof of \eqref{E:lemma4.2eq4}. By contradiction, suppose that $C^*=C_1\cup C_2$, with $C_1$ and $C_2$ non empty, closed sets in $C^0([0,1])$.
It is well-know that, from \eqref{E:lemma4.2eq2} and \eqref{E:lemma4.2eq3}, for each subset $\mathcal A\subset\mathcal C^*$ there exists the least upper bound (or supremum) and greatest lower bound (or infimum) of $\mathcal A$; we will denote respectively by $\sup\mathcal A$, $\inf\mathcal A$. Thus let
\[
l_1:=\inf\mathcal C_1\ \leq\ L_1:=\sup\mathcal C_1,\qquad
l_2:=\inf\mathcal C_2\ \leq\ L_2:=\sup\mathcal C_2.
\]
Because $\mathcal C_1$ and $\mathcal C_2$ are closed, $l_1,L_1\in\mathcal C_1$ and $l_2,L_2\in\mathcal C_2$. Since $\mathcal C_1\cap\mathcal C_2=\emptyset$, we have $L_1\preceq l_2$ or $L_2\preceq l_1$. Assume, for instance, that $L_1\preceq l_2$. Then $L_1(s)\leq l_2(s)$ $\forall\ s\in[0,1]$ and $L_1(s_0)<l_2(s_0)$ for a suitable $s_0\in[0,1]$. Let 
\[
\bar s:=s_0,\qquad \bar t:=\dfrac{L_1(s_0)+l_2(s_0)}{2},\quad \gamma(s):=\gamma(s;\bar s,\bar t)\quad s\in[0,1].
\]
By definition, $\gamma\in\mathcal C\subseteq\mathcal C^*$, but $L_1\preceq\gamma\preceq l_2$ and therefore we have a contradiction since $\gamma\notin\mathcal C_1\cup \mathcal C_2=\mathcal C^*$.

Proof of \eqref{E:lemma4.2eq5}. Let $\gamma\in\mathcal C^*$, then by definition there exists a sequence $\{\gamma_h\}_h\subset\mathcal C$ such that $\gamma_h\to\gamma$ uniformly in $[0,1]$. Because
\[
\gamma_h(s)-\gamma_h(0)=\int_0^s\phi(\sigma,\gamma_{h}(\sigma))\ d\sigma\qquad \forall\sigma\in[0,1],\ \forall\ h,
\]
passing to the limit as $h\to\infty$ in the previous identity we get \eqref{E:lemma4.2eq5}.

{\sf Step $3$.} 
Let us now consider the map $\theta:\mathcal C^*\to\R$ defined by
\[  \theta(\gamma):=\sum_{k=0}^{\infty} \dfrac{1}{2^k}\gamma(r_k)  \]
where $\{r_k\}_{k\in\N}$ is an enumeration of $\mathbb Q\cap[0,1]$. Notice that $\theta$ satisfies the following properties:
\begin{equation}\label{E:lemma4.2eq8}
\theta\text{ is continuous};
\end{equation} 
\begin{equation}\label{E:lemma4.2eq9}
\theta\text{ is strictly order preserving, that is } \theta(\gamma_1)<\theta(\gamma_2)\ {\rm if} \  \gamma_1<\gamma_2;
\end{equation} 
\begin{equation}\label{E:lemma4.2eq10}
\theta(\mathcal C^*)=[0,2];
\end{equation} 
\begin{equation}\label{E:lemma4.2eq11}
\text{there exists }\theta^{-1}:[0,2]\to\mathcal C^*\ \text{continuous}.
\end{equation} 

Indeed \eqref{E:lemma4.2eq8} and \eqref{E:lemma4.2eq9} immediately follow by the definition of $\theta$. Equality \eqref{E:lemma4.2eq10} follows by \eqref{E:lemma4.2eq4} and \eqref{E:lemma4.2eq8}; \eqref{E:lemma4.2eq11} is a consequence of \eqref{E:lemma4.2eq2}, \eqref{E:lemma4.2eq9} and \eqref{E:lemma4.2eq10}. 
Eventually let us consider the map $\chi:[0,1]\times[0,2]\to[0,1]$ 
\[
\chi(s,\tau): = \theta^{-1}(\tau)(s) \ \text{if } (s,\tau)\in[0,1]\times[0,2].
\]
Then, from \eqref{E:lemma4.2eq11}, $\chi$ is continuous. Moreover the map $\Upsilon:(0,1)\times(0,2)\to(0,1)^2$, defined by $\Upsilon(s,\tau):=(s,\chi(s,\tau))$, turns out to be a full Lagrangian parameterisation associated to $\phi:(0,1)^2\to\R$.
\end{proof}

\begin{remark}\label{R:remarkdimnsez4}{\rm
Notice that the same construction works with more variables, considering analogous characteristic curves $\gamma(s;\bar s,\bar{\hat{z}}_n,\bar t)$---having the same order relation at $\bar{\hat{z}}_n$ frozen---and the relative parameterisation given by $\chi(s,\hat z_n,\tau):=\gamma(s; \bar s, \hat z_n, \bar t)$ with $\tau=\theta(\gamma(s; \bar s, \hat z_n, \bar t))$ defined as above.
The continuity in $\hat z_n$ however is not guaranteed.}
\end{remark}

\section{Equivalence between Distributional solutions and Intrinsic Lipschitz Condition.}
\label{S:equivalence}
In this section we prove Theorem \ref{T:firsttheorem}, without dimensional restrictions.

\subsection{Some properties of distributional solutions}

Preliminary we highlight here two properties of continuous distributional solutions $\phi(z,t)$ to the problem $\gf\f=w $. 
In particular we need regularity results of the solution along the characteristics lines $\gamma$ of the fields $\gf_j$. 
In the case of the non linear field $\Wf$, the integral curves of $\dot\gamma(s)=\phi(s,\hat z_n,\gamma(s))$ exist by the continuity and boundedness of $\phi$.

\begin{lemma}\label{L:DafLipschitz} Let $\om\subset\W$ be an open set.
A continuous distributional solution $\f:\om\to\R$ to $\gf\f=w$ is $\|w\|_{\infty}$-Lipschitz along any characteristic line $\gamma:[-\delta,\delta]\to \R$ satisfying
$$\dot \gamma(s)=\f(s,\hat z_n,\gamma(s))\qquad s\in[-\delta,\delta],\ \hat z_n\ {\rm fixed}.$$
\end{lemma}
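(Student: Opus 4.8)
The plan is to show that $\phi$ restricted to the characteristic curve $\gamma$ is Lipschitz with constant $\|w\|_\infty$, following the argument of Dafermos. The key idea is to estimate the increment $\phi(s_2,\hat z_n,\gamma(s_2)) - \phi(s_1,\hat z_n,\gamma(s_1))$ for $-\delta \le s_1 < s_2 \le \delta$ by testing the distributional formulation of the PDE against test functions that concentrate, in the limit, onto the curve $\gamma$. Since the statement is essentially one-dimensional in the nonlinear variable, I would freeze the transverse coordinates $\hat z_n$ and work with the planar equation $\phi_z + [\phi^2/2]_t = w$, as indicated in the outline.

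First I would set up a suitable family of mollifications. Because $\phi$ is only continuous and the characteristic $\gamma$ solves $\dot\gamma(s) = \phi(s,\hat z_n,\gamma(s))$, I would consider test functions of the form $\varphi_\varepsilon(z,t) = \rho_\varepsilon(t - \gamma(z))\,\eta(z)$, where $\rho_\varepsilon$ is a standard mollifier in the $t$-variable concentrating at the origin and $\eta$ is a smooth cutoff in $z$ localizing to $[s_1,s_2]$. The point of choosing the argument $t-\gamma(z)$ is that differentiating $\varphi_\varepsilon$ along the flow produces, via the chain rule, the combination $\partial_z \varphi_\varepsilon - \dot\gamma(z)\,\partial_t\varphi_\varepsilon = \partial_z\varphi_\varepsilon - \phi(z,\hat z_n,\gamma(z))\,\partial_t\varphi_\varepsilon$, which is precisely the transport derivative adapted to the nonlinear field $W^\phi$. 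Plugging $\varphi_\varepsilon$ into the distributional identity $\int \phi\,\partial_z\varphi + \tfrac12\phi^2\,\partial_t\varphi = -\int w\,\varphi$ and rearranging the $\partial_t$ term as $\tfrac12\phi^2 - \phi\cdot\gamma$-type corrections, I would try to recognize the left-hand side, in the limit $\varepsilon \to 0$, as a difference of boundary values of $\phi$ along $\gamma$ coming from the $\eta'$ factor.

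The main estimate would then read, after passing to the limit $\varepsilon\to 0$ and choosing $\eta$ to approximate the indicator of $[s_1,s_2]$, as
\[
\phi(s_2,\hat z_n,\gamma(s_2)) - \phi(s_1,\hat z_n,\gamma(s_1)) = \int_{s_1}^{s_2} w(z,\hat z_n,\gamma(z))\,dz
\]
in an appropriate weak sense, whence $|\phi(s_2,\cdot,\gamma(s_2)) - \phi(s_1,\cdot,\gamma(s_1))| \le \|w\|_\infty\,|s_2 - s_1|$, giving the claimed Lipschitz bound. The delicate technical points, and what I expect to be the main obstacle, are twofold: first, justifying the limit $\varepsilon\to 0$ rigorously, since $\phi$ is merely continuous while the quadratic term $\phi^2$ and the mollification in $t$ interact — one must control the error between $\tfrac12\partial_t(\phi^2)$ tested against $\rho_\varepsilon(t-\gamma(z))$ and the genuine transport derivative $\phi\,\dot\gamma$, using uniform continuity of $\phi$ on compact sets to show the discrepancy vanishes. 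Second, handling the fact that $w$ is only in $\mathrm L^\infty$, so the expression $w(z,\hat z_n,\gamma(z))$ along the curve is not a priori well-defined pointwise; here I would only need the bound $\|w\|_\infty$, so I would keep $w$ inside the integral against the mollified test function and estimate $|\int w\,\varphi_\varepsilon| \le \|w\|_\infty \int |\varphi_\varepsilon|$ uniformly, letting the mass of $\varphi_\varepsilon$ concentrate on a strip of $z$-width $|s_2-s_1|$ and unit total mass in $t$, which yields the constant $\|w\|_\infty|s_2-s_1|$ directly without ever evaluating $w$ on the curve.
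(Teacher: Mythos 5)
Your setup---testing the weak form of $\phi_z+[\phi^2/2]_t=w$ against $\varphi_\varepsilon(z,t)=\rho_\varepsilon(t-\gamma(z))\,\eta(z)$---is sound, but the step you yourself flag as delicate, namely that uniform continuity of $\phi$ makes the discrepancy between $\tfrac12\partial_t(\phi^2)$ and the transport term vanish, is exactly where the argument breaks, and no continuity assumption can repair it. Carry the computation out: using $\dot\gamma(z)=\phi(z,\gamma(z))$ and $\int\rho_\varepsilon'(t-\gamma(z))\,dt=0$, the left-hand side of the distributional identity becomes
\begin{equation*}
\iint \phi\,\rho_\varepsilon(t-\gamma(z))\,\eta'(z)\,dt\,dz
\;+\;
\underbrace{\frac12\iint \bigl(\phi(z,t)-\phi(z,\gamma(z))\bigr)^2\,\rho_\varepsilon'(t-\gamma(z))\,\eta(z)\,dt\,dz}_{=:E_\varepsilon}.
\end{equation*}
The first integral converges to the boundary difference you want, but after rescaling $t=\gamma(z)+\varepsilon u$ one finds $|E_\varepsilon|$ is of order $\omega(\varepsilon)^2/\varepsilon$, where $\omega$ is the modulus of continuity of $\phi$ in $t$. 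This vanishes only if $\phi$ is \emph{better} than $1/2$-H\"older in $t$, whereas the solutions at hand are in general exactly $1/2$-H\"older (Corollary~\ref{holder}, Example~\ref{Ex:1}). Concretely, take $\phi(z,t)=\sqrt{\max\{t,0\}}$, $w=\tfrac12\chi_{\{t>0\}}$, $\gamma\equiv0$ on $\om=(0,1)\times(-1,1)$: then $\phi(s_2,\gamma(s_2))-\phi(s_1,\gamma(s_1))=0$, while $\int w\,\varphi_\varepsilon\to\tfrac14(s_2-s_1)$ (a symmetric mollifier sees the average of the two one-sided traces of $w$) and $E_\varepsilon\to-\tfrac14(s_2-s_1)$. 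The discrepancy does not vanish; it exactly compensates the source term, and dropping it, as your plan requires, would let you derive $0=\tfrac14(s_2-s_1)$. So the identity you aim at is false for a symmetric mollifier, and the proof does not close.

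The paper's proof (following Dafermos) circumvents this by arranging for the quadratic discrepancy to appear \emph{with a definite sign} rather than trying to make it small: one integrates the equation over the one-sided strip $\{a\le s\le b,\ \gamma(s)\le t\le\gamma(s)+\epsilon\}$, i.e.\ tests with (an approximation of) its indicator, and the error term is then $-\int_a^b\bigl[\phi(s,\hat z_n,\gamma(s)+\epsilon)-\phi(s,\hat z_n,\gamma(s))\bigr]^2\,ds\le0$, see \eqref{daf0623}. Discarding it yields the one-sided inequality $\phi(b,\hat z_n,\gamma(b))-\phi(a,\hat z_n,\gamma(a))\le\|w_n\|_{\rmLinf(\om)}(b-a)$ after dividing by $\epsilon$ and letting $\epsilon\to0$; the reverse inequality comes from the strip on the other side of the characteristic, where the sign of the quadratic term flips. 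Your plan becomes correct if you replace the symmetric kernel $\rho_\varepsilon$ by one-sided kernels approximating $\tfrac1\varepsilon\chi_{[0,\varepsilon]}$ and $\tfrac1\varepsilon\chi_{[-\varepsilon,0]}$, and argue via two one-sided inequalities instead of one identity---at which point it coincides with the paper's argument.
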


\begin{proof}
In the same way in Dafermos \cite{Dafermos} we obtain for $a,b\in (-\de,\de)$ and $\hat z_n$ fixed
\begin{equation}\label{daf0623}
\int_{\gamma(b)}^{\gamma(b)+\epsilon}\phi(b,\hat z_n,t)dt-\int_{\gamma(a)}^{\gamma(a)+\epsilon}\phi(a,\hat z_n,t)dt-\int_{a}^{b}\int_{\gamma(s)}^{\gamma(s)+\epsilon}w(s,\hat z_n,t)\ dt\,ds=
\end{equation}
$$=-\int_{a}^{b}[\phi(s,\hat z_n,\gamma(s)+\epsilon)-\phi(s,\hat z_n,\gamma(s))]^2\,ds\leq0$$
and then
$$\int_{\gamma(b)}^{\gamma(b)+\epsilon}\phi(b,\hat z_n,t)dt-\int_{\gamma(a)}^{\gamma(a)+\epsilon}\phi(a,\hat z_n,t)dt\leq \int_{a}^{b}\int_{\gamma(s)}^{\gamma(s)+\epsilon}w(s,\hat z_n,t)\ dt\,ds\leq \|w_{n}\|_{\rmLinf(\om)}(b-a)\epsilon$$
Dividing by $\epsilon$ and getting to the limit to $\epsilon\to0$ we obtain
$$\phi(b,\hat z_n,\gamma(b))-\phi(a,\hat z_n,\gamma(a))\leq\|w_{n}\|_{\rmLinf(\om)}(b-a).$$
The opposite inequality is obtained in a similar way integrating on the left of the characteristic.
\end{proof}

We obtain the same result of Lemma \ref{L:DafLipschitz} for the linear fileds $X_j,Y_j$ following the same proof.

\begin{lemma}
\label{L:DafLipschitzbis} 
Let $\om\subset\W$ be an open set.
A continuous distributional solution $\f:\om\to\R$ to $\gf\f=w$ is $\|w_{j}\|_{\rmLinf(\om)}$-Lipschitz along any characteristic line $\Gamma:[-\delta,\delta]\to \om$ satisfying
$$\dot \Gamma(s)=\gf_j\circ \Gamma(s)\qquad j=1,\dots,n-1,n+1,\dots, 2n-1.$$
\end{lemma}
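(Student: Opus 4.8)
The plan is to mimic exactly the argument of Lemma~\ref{L:DafLipschitz}, replacing the nonlinear field $\Wf$ with one of the linear fields $\gf_j$ ($j\neq n$), and to verify that the only structural ingredient used---the divergence form of the equation and the sign of the quadratic correction---carries over, only now the correction term vanishes because the field is linear. Concretely, fix $j\in\{1,\dots,n-1,n+1,\dots,2n-1\}$ and a characteristic $\Gamma$ solving $\dot\Gamma(s)=\gf_j\circ\Gamma(s)$. Recall that $\gf_j\f=\partial_{z_j}\f\mp\tfrac{z_{j+n}}{2}\partial_t\f$ is itself in divergence form (the coefficient $z_{j+n}$ does not depend on $t$, so $\partial_t(z_{j+n}\f)=z_{j+n}\partial_t\f$), which is what allows the integration-by-parts computation to go through distributionally against the test strip $[\Gamma(s),\Gamma(s)+\epsilon]$.

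First I would write down, for $a,b\in(-\delta,\delta)$, the analogue of~\eqref{daf0623}: integrating the distributional identity $\gf_j\f=w_j$ against the characteristic of the field over the thin tube of width $\epsilon$ swept out between $\Gamma(a)$ and $\Gamma(b)$. The key point is that for a \emph{linear} field the "competitor" correction that in Lemma~\ref{L:DafLipschitz} produced the nonpositive term $-\int_a^b[\phi(s,\hat z_n,\gamma(s)+\epsilon)-\phi(s,\hat z_n,\gamma(s))]^2\,ds$ is now absent (the field is affine in the unknown, not quadratic), so the corresponding identity is an exact equality with no quadratic remainder. Then I would bound the right-hand side by $\|w_j\|_{\rmLinf(\om)}(b-a)\epsilon$, divide by $\epsilon$, and pass to the limit $\epsilon\to0$ using the continuity of $\f$ to recover
\[
\f(\Gamma(b))-\f(\Gamma(a))\leq \|w_j\|_{\rmLinf(\om)}(b-a).
\]
The reverse inequality follows by the symmetric argument, integrating the tube on the other side of the characteristic (width $-\epsilon$), exactly as in the previous lemma.

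The technical care needed is in justifying the distributional integration by parts along a single characteristic curve of the linear field; this requires mollifying $\f$, performing the computation for the smooth approximations where everything is classical, and then passing to the limit. Since $\f$ is continuous and $w_j\in\rmLinf$, and the characteristics of the linear fields are explicit (straight lines in the relevant coordinates, as $\gf_j$ has constant-in-$t$ coefficients), the limit passage is routine and no monotonicity or ordering of curves is needed here---this is precisely why the authors remark that the linear case "holds with less technicality." The main obstacle, such as it is, is purely bookkeeping: keeping track of the frozen variables $\hat z_j$ and the correct sign of the coefficient $\pm z_{j+n}/2$ in the two coordinate ranges $j<n$ and $j>n$, and confirming that the divergence structure that makes Dafermos's trick work is genuinely present for each linear field. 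Once that is checked, the estimate and its reverse give the claimed $\|w_j\|_{\rmLinf(\om)}$-Lipschitz bound verbatim.
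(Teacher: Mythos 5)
Your proposal is correct and follows essentially the same route as the paper: the paper's entire proof of Lemma~\ref{L:DafLipschitzbis} is the remark that one repeats the Dafermos tube argument of Lemma~\ref{L:DafLipschitz} for the linear fields, which is precisely what you carry out, with the accurate observation that the boundary flux terms now cancel exactly (since the characteristic speed $\mp z_{j+n}/2$ is independent of $\phi$), so the quadratic remainder of~\eqref{daf0623} disappears and the tube identity is an exact equality. Your extra care about the divergence structure ($z_{j+n}$ is frozen along the plane of motion) and the mollification-based justification of the integration by parts are sound implementations of details the paper leaves implicit.
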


We pass now to the H\"older continuity in the other variable.

\begin{lemma}
\label{L:holderode} 
Let $f\in \mathrm C^0(\om)$ be such that of all $\tau\in\R$ there are $\gamma:[-\delta,\delta]\to \R$ satisfying
$$\begin{cases}\dot \gamma(s)=f(s,\gamma(s)) &s\in[-\delta,\delta]\\ \gamma(0)=\tau\end{cases}$$
and that
$$|f(s,\gamma(s))-f(s',\gamma(s'))|\leq L|s-s'|.$$
Then we have
$$|f(0,\tau_1)-f(0,\tau_2)|\leq 2 \sqrt{2L }\sqrt{|\tau_1-\tau_2|}
\qquad \text{for $|\tau_1-\tau_2|\leq r_0 $,\quad $0<r_0<\frac{\de^4}{16}$}
.$$
\end{lemma}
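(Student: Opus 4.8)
The plan is to compare, for a freely chosen auxiliary ``time'' $h\in(0,\delta]$, the two characteristics issued from $(0,\tau_1)$ and $(0,\tau_2)$, and to exploit the single-valuedness of $f$ at any point where two such characteristics meet. Assume without loss of generality $\tau_1<\tau_2$, and write $a:=f(0,\tau_1)$, $b:=f(0,\tau_2)$; fix characteristics $\gamma_1,\gamma_2:[-\delta,\delta]\to\R$ as in the hypothesis, with $\gamma_i(0)=\tau_i$. The first ingredient is a \emph{consistency estimate}: if $\gamma_1(s^\ast)=\gamma_2(s^\ast)$ for some $s^\ast$ with $|s^\ast|\le\delta$, then, applying the hypothesis $|f(s,\gamma_i(s))-f(0,\tau_i)|\le L|s|$ (take $s'=0$) along each of the two curves and the triangle inequality at the common point, one gets $|a-b|\le 2L|s^\ast|$.

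The second ingredient is a quadratic trapping of the characteristics. Since $\dot\gamma_i(s)=f(s,\gamma_i(s))\in[\,f(0,\tau_i)-L|s|,\ f(0,\tau_i)+L|s|\,]$, integration gives $\gamma_i(s)=\tau_i+f(0,\tau_i)\,s+\theta_i(s)$ with $|\theta_i(s)|\le \tfrac{L}{2}s^2$. Working forward in $s$, a meeting of the two curves is forced as soon as $(a-b)h-Lh^2\ge \tau_2-\tau_1$ for some $h\in(0,\delta]$: then the lower bound for $\gamma_1(h)$ exceeds the upper bound for $\gamma_2(h)$, and since $\gamma_1(0)<\gamma_2(0)$ the intermediate value theorem yields a crossing $s^\ast\le h$. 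Combining the two ingredients I obtain, for \emph{every} $h\in(0,\delta]$, the following alternative: either $(a-b)h-Lh^2<\tau_2-\tau_1$, whence $a-b<\frac{\tau_2-\tau_1}{h}+Lh$; or a crossing occurs and the consistency estimate gives $a-b\le 2Lh$. In both cases
\[
a-b\ \le\ \frac{\tau_2-\tau_1}{h}+2Lh\qquad\text{for all }h\in(0,\delta].
\]
Running the same argument backward in $s$ (replacing $s$ by $-s$, which only turns the initial slopes into $-a,-b$ and preserves the along-characteristic Lipschitz bound) produces the symmetric estimate $b-a\le \frac{\tau_2-\tau_1}{h}+2Lh$, so that $|a-b|\le \frac{|\tau_1-\tau_2|}{h}+2Lh$ for all admissible $h$. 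Minimising the right-hand side at $h_\ast=\sqrt{|\tau_1-\tau_2|/(2L)}$ then gives exactly $|a-b|\le 2\sqrt{2L}\,\sqrt{|\tau_1-\tau_2|}$.

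The role of the smallness assumption $|\tau_1-\tau_2|\le r_0<\delta^4/16$ is precisely to guarantee that the optimal $h_\ast$ is admissible, i.e.\ $h_\ast\in(0,\delta]$, so that the previous inequality may be evaluated at $h=h_\ast$; for larger separations one can only take $h=\delta$ and gets a weaker control. The main delicate point is the crossing argument: because $f$ is merely continuous, the Cauchy problem $\dot\gamma=f(s,\gamma)$ need not have unique solutions and distinct characteristics may merge and separate, so the meeting of $\gamma_1$ and $\gamma_2$ must be \emph{detected} through the quadratic trapping bounds and the intermediate value theorem rather than assumed, and one must check that the crossing point $(s^\ast,\gamma_1(s^\ast))$ still lies where $f$ is defined and the Lipschitz hypothesis applies. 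Once these points are secured, the four configurations determined by the orderings of $\{\tau_1,\tau_2\}$ and of $\{a,b\}$ are all covered by the forward/backward symmetry, which completes the proof.
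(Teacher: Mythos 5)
Your argument is, at its core, the same as the paper's: both rest on (i) the quadratic trapping $|\gamma_i(s)-\tau_i-f(0,\tau_i)\,s|\le \tfrac{L}{2} s^2$ coming from the along-characteristic Lipschitz bound, and (ii) the single-valuedness of $f$ at a point where two characteristics meet, so that slopes which differ too much force a crossing and hence an estimate. The difference is organisational: the paper runs a proof by contradiction against a threshold $\alpha$, while you derive directly the one-parameter family of bounds $|f(0,\tau_1)-f(0,\tau_2)|\le \frac{|\tau_1-\tau_2|}{h}+2Lh$ for all $h\in(0,\delta]$ (forward and backward in $s$) and then optimise in $h$. Up to and including that family of inequalities your proof is correct, and arguably cleaner than the paper's.

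The flaw is the final admissibility claim: $|\tau_1-\tau_2|\le r_0<\delta^4/16$ does \emph{not} guarantee $h_*=\sqrt{|\tau_1-\tau_2|/(2L)}\le\delta$. That requires $|\tau_1-\tau_2|\le 2L\delta^2$, and when $L<\delta^2/32$ there are pairs with $2L\delta^2<|\tau_1-\tau_2|<\delta^4/16$; for those your inequalities give only $|f(0,\tau_1)-f(0,\tau_2)|\le\frac{|\tau_1-\tau_2|}{\delta}+2L\delta$, which is strictly weaker than $2\sqrt{2L}\sqrt{|\tau_1-\tau_2|}$ in that regime. You should know, however, that the paper's proof suffers from the very same problem and resolves it by silently weakening the conclusion: it takes $\alpha:=\max\{2\sqrt{2L},r_0^{1/4}\}$ as the contradiction threshold, so what is actually proved is the H\"older bound with constant $\max\{2\sqrt{2L},r_0^{1/4}\}$, not $2\sqrt{2L}$; the entry $r_0^{1/4}$ is precisely what makes the crossing time $\bar s=2\sqrt{\tau_1-\tau_2}/\alpha\le 2|\tau_1-\tau_2|^{1/4}\le\delta$ admissible --- in a contradiction argument the assumed-too-large quotient itself accelerates the crossing, a leverage your direct argument does not have. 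Indeed no proof can achieve the constant $2\sqrt{2L}$ alone: for $f(s,t)=\epsilon t/(1+\epsilon s)$ with $0<\epsilon<1/\delta$, the characteristics $\gamma(s)=\tau(1+\epsilon s)$ carry the constant value $f(s,\gamma(s))\equiv\epsilon\tau$, so the hypothesis holds with $L=0$, yet $f(0,\cdot)$ is not constant. So your proof establishes the lemma to exactly the same (corrected) extent as the paper's --- e.g.\ with constant $\max\{2\sqrt{2L},\delta/2\}$, obtained by taking $h=\min\{h_*,\delta\}$ and using $\sqrt{r_0}<\delta^2/4$ in the second case --- and only your sentence explaining the role of $r_0$ needs to be replaced by this honest fallback.
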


\begin{proof} Let us denote 
$$
\beta:= L,\quad \alpha:= {\max\{2\sqrt{2L},r_0^{1/4}\}}, \quad f_0(\tau)=f(0,\tau).
$$ 
Let us observe that
$\dfrac{\beta}{\alpha^2}\leq \, \dfrac{1}{8}.$

By contradiction, let us assume there exist  ${-\de \le\,\t_2<\,\t_1\le\,\de}$, $0<\,\bar r<\,r_0<\delta$ such that
\begin{equation}\label{hoelderode4}
0<\,|\t_1-\t_2|\le\,\bar r
\end{equation}
\begin{equation}\label{hoelderode5}
\frac{|f_0(\t_1)- f_0(\t_2)|}{\sqrt{\t_1-\t_2}}>\,\alpha.
\end{equation}

The idea of the proof is the following: the Lipschitz condition in the hypothesis is an upper bound on the second derivative of the mentioned curves $\gamma$. Therefore, if their first derivative wants to change it takes some time in $s$. If we assume that at $s=0$ the first derivative differs at two points $\tau_{1},\tau_{2}$ at least of the ratio~\eqref{hoelderode5}, then the the relative curves $\gamma_1, \gamma_2 $ starting from those points must meet soon. However, at the time they meet the first derivative did not have the time to change enough to become equal, providing an absurd.

Let us introduce curves $\gamma_1, \gamma_2$ such that for $i=1,2$, $s\in {(-\de,\de)}$ 
$$\dot \gamma_i(s)=f(s,\gamma_i(s)),$$
$$
\gamma_{i}(0):=\t_i.
$$
We observe that, by our Lipschitz assumption, $\dfrac{d}{ds}f(s,\gamma_i(s)):=h_i(s)$ is a function bounded by $L$.
Therefore we can represent each $\gamma_i(s)$ for each $s\in [-\delta,\delta]$
as 
\begin{equation}\label{hoelderode2}
 \gamma_i(s)=\,\t_i+\,\displaystyle\int_{0}^s f(s,\gamma_{i}(\sigma))\,d\sigma
\end{equation}
$$=\,\t_i+\,g(\t_i)\,s+\,\int_{0}^s \int_0^\sigma h_i(z)\,dz\ d\sigma\quad\forall s\in [-\de,\de]\,.$$

In particular by the second equality in \eqref{hoelderode2}, for $0\leq s\leq\de$,
\begin{equation}\label{hoelderode9}
\gamma_1(s)-\gamma_2(s)\leq \t_1-\t_2+(f_0(\t_1)-f_0(\t_2))s+2\beta s^2
\end{equation}
for $s\in[-\de,\de]$.
By \eqref{hoelderode5} we get
\begin{equation}\label{hoelderode6}
f_0(\t_1)- f_0(\t_2)<\,-\alpha\,\sqrt{\t_1-\t_2}
\end{equation}
or
\begin{equation}\label{hoelderode7}
f_0(\t_1)- f_0(\t_2)>\,\alpha\,\sqrt{\t_1-\t_2}
\end{equation}
Let us prove now that if \eqref{hoelderode6} holds then there exists $0<\,s^*<\,\de$ such that
\begin{equation}\label{hoelderode8}
\gamma_1(s^*)=\gamma_2(s^*).
\end{equation}

Let $\bar s:=\displaystyle 2\,\frac{\sqrt{\t_1-\t_2}}{\alpha}$ then

\begin{equation}\label{hoelderode10}
\bar s\in [0,\de], \quad {\gamma_1(s^*)\leq\gamma_2(s^*)}\,.
\end{equation}
Indeed by \eqref{hoelderode4} and the definition of $\alpha$, $\bar s=\, 2\,\frac{\sqrt{\t_1-\t_2}}{\alpha}\le\, 2\, (\t_1-\t_2)^{1/4}\le\, 2\, \bar r^{1/4}\le\, \de$. On the other hand by \eqref{hoelderode9} (with $s=\bar s$),  \eqref{hoelderode6} gives 
\begin{equation*}
\begin{array}{cl}
\displaystyle \gamma_1(\bar s)-\gamma_2(\bar s)&\le\,\t_1-\t_2-\, 2\,(\t_1-\t_2)+\dfrac{8\beta}{\alpha^2}\,(\t_1-\t_2)\\
&=\,(\t_1-\t_2)\, \left(-1+\dfrac{8\beta}{\alpha^2}\right){\leq }0
\end{array}
\end{equation*}
Then \eqref{hoelderode10} follows. Let 

\begin{equation*}
s^*:=\sup\{s\in[0,\de_2]:\gamma_1(s)>\gamma_2(s)\}
\end{equation*} 

then by \eqref{hoelderode8}  $0<s^*<\bar s\le\, \de$ and it satisfies \eqref{hoelderode8}.

If \eqref{hoelderode7} holds we can repeat the argument reversing the $s$-axis getting that there exist $-\de<\,s*<\;0$ such that \eqref{hoelderode8} still holds. 
 
Let us prove now that
\begin{equation}\label{hoelderode11}
f(s*,\gamma_1(s*))\neq f(s*,\gamma_2(s*)),
\end{equation}
then a contradiction and the thesis will follow. Indeed, for instance, let us assume \eqref{hoelderode6}. Then by \eqref{hoelderode2} and the bound on $h_i$
$$
f(s*,\gamma_{1}(s*))-f(s*,\gamma_{2}(s*))=f_0(\t_1)-f_0(\t_2)+\int_0^{s*}h_1(\sigma)-h_2(\sigma)d\sigma\leq
$$
$$\leq f_0(\t_1)-f_0(\t_2)+2\beta\,s*\leq\,f_0(\t_1)-f_0(\t_2)+2\beta\,\bar s
$$
$$\leq\,-\alpha\,\sqrt{\t_1-\t_2}+2\, \frac{2\beta}{\alpha}\,\sqrt{\t_1-\t_2}\leq
$$
$$
\leq 2\,\alpha\,\sqrt{\t_1-\t_2}\left[-\frac{1}{2}+\frac{2\beta}{\alpha^2}\right]\,.
$$ 
Therefore we get that
$$f(s*,\gamma_{1}(s*))-f(s*,\gamma_{2}(s*)))<0$$ and \eqref{hoelderode11} follows.
\end{proof}

\subsection{Proof of the equivalence}
\label{Ss:firsttheorem}

We are now able to give the proof of Theorem \ref{T:firsttheorem}. We distinguish the two different implications in the following two lemmas.

\begin{lemma}
\label{L:distrLip}
Let $\om\subset\W$ be an open set.
If $\f$ is a distributional solution of $\gf\f=w$, then $\f\in \mathrm{ Lip}_{\W,\loc}(\om)$.
\end{lemma}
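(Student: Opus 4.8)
The plan is to verify the metric definition of intrinsic Lipschitz continuity (Definition \ref{D:deflip}) directly, by estimating $|\f(A)-\f(B)|$ in terms of $\df(A,B)$, using the two regularity results established along characteristics: the Lipschitz bound of Lemma \ref{L:DafLipschitz} (and its linear-field analogue Lemma \ref{L:DafLipschitzbis}) along characteristic curves, and the $1/2$-H\"older estimate of Lemma \ref{L:holderode} in the transverse (vertical) direction. As indicated in the outline of the proofs, the strategy is to connect two arbitrary points $A,B\in\om'$ (with $\om'\Subset\om$) by a two-piece path: first travel along a characteristic curve $\gamma$ of the nonlinear field $\Wf$ (satisfying $\dot\gamma(s)=\f(s,\hat z_n,\gamma(s))$) that joins the vertical line through $A$ to the vertical line through $B$, and then close up with the remaining vertical segment on which $z$ is constant. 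Since the problem is essentially two-dimensional, I will first carry out the argument in the case $n=1$, where only the nonlinear field is present, and then remark that the linear fields $X_j$ are handled identically by Lemma \ref{L:DafLipschitzbis}.

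Concretely, fix $\om'\Subset\om$ and work in the case $n=1$ with variables $(z,t)$. Given $A=(z_A,t_A)$ and $B=(z_B,t_B)$, let $\gamma$ be a characteristic emanating from $A$, so $\gamma(z_A)=t_A$ and $\dot\gamma=\f\circ\Gamma$ where $\Gamma(z)=(z,\gamma(z))$; such a curve exists by continuity and boundedness of $\f$. Along this characteristic, Lemma \ref{L:DafLipschitz} gives
\[
|\f(z_B,\gamma(z_B))-\f(z_A,t_A)|\leq \|w\|_{\rmLinf(\om')}\,|z_B-z_A|.
\]
It then remains to control the vertical increment between the point $(z_B,\gamma(z_B))$ and $B=(z_B,t_B)$. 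Here I invoke Lemma \ref{L:holderode}: the hypothesis of that lemma, namely the Lipschitz bound $|f(s,\gamma(s))-f(s',\gamma(s'))|\leq L|s-s'|$ along characteristics, is exactly supplied by Lemma \ref{L:DafLipschitz}, so $\f$ is locally $1/2$-H\"older on the vertical lines, yielding
\[
|\f(z_B,\gamma(z_B))-\f(z_B,t_B)|\leq C\,|\gamma(z_B)-t_B|^{1/2}
\]
for points close enough. Summing the two estimates by the triangle inequality bounds $|\f(A)-\f(B)|$ by a horizontal term $|z_B-z_A|$ plus a square-root of a vertical term $|\gamma(z_B)-t_B|^{1/2}$.

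The remaining and principal task is to reconcile this bound with the quasidistance $\df$ from Definition \ref{fidistanzadef2708}, whose vertical part for $n=1$ is
\[
\left|t_B-t_A-\tfrac{1}{2}(\f(A)+\f(B))(z_B-z_A)\right|^{1/2}.
\]
The key algebraic step is therefore to show that the quantity inside the square root, $|\gamma(z_B)-t_B|$, is comparable to the $\df$-vertical quantity $|t_B-t_A-\tfrac12(\f(A)+\f(B))(z_B-z_A)|$, up to an error absorbed by the horizontal term $|z_B-z_A|$. This is where the nonlinearity enters, and I expect it to be the main obstacle: one must expand $\gamma(z_B)-t_A=\int_{z_A}^{z_B}\f(s,\gamma(s))\,ds$ and compare the integral of $\f$ along the characteristic with the ``trapezoidal'' value $\tfrac12(\f(A)+\f(B))(z_B-z_A)$, using again the characteristic-Lipschitz control on $\f\circ\Gamma$ to bound the discrepancy by a constant times $|z_B-z_A|^2$, which is dominated by $|z_B-z_A|$ on the bounded set $\om'$. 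Combining these comparisons gives $|\f(A)-\f(B)|\leq L'\,\df(A,B)$ on $\om'$, proving $\f\in\mathrm{Lip}_{\W,\loc}(\om)$. For $n\geq 2$ the horizontal increment along the linear directions $X_j$ is handled verbatim by Lemma \ref{L:DafLipschitzbis}, and the remaining structure of $\df$ (including the symplectic term $\sigma(\hat z_n,\hat z_n')$) is accounted for by connecting $A$ and $B$ through the intermediate directions in sequence, so the two-dimensional argument suffices componentwise.
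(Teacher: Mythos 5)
Your plan follows the paper's own proof of Lemma~\ref{L:distrLip} step by step: the two-piece path (characteristic segment, then vertical segment), Lemma~\ref{L:DafLipschitz}/\ref{L:DafLipschitzbis} along the characteristic, Lemma~\ref{L:holderode} for the vertical increment, and a trapezoid comparison to match $\df$. However, the step you yourself flag as ``the main obstacle'' is handled incorrectly, and this is a genuine gap. The trapezoidal value appearing in $\df$ is $\tfrac12(\f(A)+\f(B))(z_B-z_A)$, and $B$ does \emph{not} lie on the characteristic $\Gamma$ through $A$: the characteristic ends at $(z_B,\gamma(z_B))$, not at $B=(z_B,t_B)$. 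The Lipschitz control of $\f\circ\Gamma$ bounds by $C|z_B-z_A|^2$ only the discrepancy between $\int_{z_A}^{z_B}\f(s,\gamma(s))\,ds$ and the trapezoid with \emph{both} endpoints on the characteristic, $\tfrac12\bigl(\f(A)+\f(z_B,\gamma(z_B))\bigr)(z_B-z_A)$; this is the paper's remainder $R_3$ and estimate~\eqref{equaz3.10n}. The leftover mismatch term $\tfrac12\,|\f(B)-\f(z_B,\gamma(z_B))|\,|z_B-z_A|$ cannot be bounded by the characteristic-Lipschitz control at all: by Lemma~\ref{L:holderode} it is only $\leq C\,|\gamma(z_B)-t_B|^{1/2}|z_B-z_A|$, which reintroduces precisely the unknown quantity $|\gamma(z_B)-t_B|$ you are trying to estimate. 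The paper's proof resolves this self-reference with Young's inequality, $C|t'-t''|^{1/2}|z_n'-z_n|\leq \epsilon|t'-t''|+\tfrac{1}{\epsilon}|z_n'-z_n|^2$ (the remainder $R_1$, estimate~\eqref{equaz3.11n}), and then absorbs the term $\epsilon|t'-t''|$ into the left-hand side of~\eqref{equaz3.9n} by choosing $\epsilon=1/2$. This absorption is a necessary idea and is absent from your plan.

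A second, related flaw: your reduction ``a constant times $|z_B-z_A|^2$, which is dominated by $|z_B-z_A|$ on the bounded set $\om'$'' destroys the homogeneity the argument needs. The error sits \emph{inside} the square root. An error of order $|z_B-z_A|^2$ contributes, after taking roots, $O(|z_B-z_A|)\leq O(\df(A,B))$, which is what you want; an error of order $|z_B-z_A|$ contributes $|z_B-z_A|^{1/2}$, which for nearby points is much larger than $\df(A,B)$ (take the vertical part of $\df$ equal to zero and $|z_B-z_A|=\delta\to0$: you would need $\delta^{1/2}\leq C\delta$). That weakened bound only yields the Euclidean $1/2$-H\"older estimate of Corollary~\ref{holder}, not the intrinsic Lipschitz bound. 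So keep the quadratic bound, run the absorption described above, and conclude $|t'-t''|^{1/2}\leq C\,\df(A,B)$ exactly as in the chain \eqref{equaz3.9n}--\eqref{equaz3.12n} of the paper.
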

\begin{proof}
Let $B,B'\in I_{\de_0}(A)$ for a sufficiently small $\de_0$. For $n\geq 2$ let $\overline X, \Wf$ be the vector fields given by
\begin{equation*}
\overline X:=\sum_{j=1\atop {j\neq n+1}}^{2n-1}(z_j'-z_j)\widetilde X_j,\qquad\Wf:=\dede{z_n}+\f\dede{t}.
\end{equation*}
Define
\begin{eqnarray*}
B^\ast &:=& \exp(\overline X)(B)\\
&=& B\star(0,(z_1'-z_1,\dots,z_{n-1}'-z_{n-1},z_{n+1}'-z_{n+1},\dots,z_{2n-1}'-z_{2n-1}),0)\\
&=& (z_n,\hat z_n',t-\sigma(\hat z',\hat z))\\
B'' &:=& \exp((z_n'-z_n)\Wf)(B^\ast)=(z_n',\hat z_n',t'')\quad\mbox{(for a certain $t''$);}
\end{eqnarray*}
observe that $B^\ast$ and $B''$ are well defined. For $n=1$, $\overline X$ is not defined and we set $B^\ast=B$ and $B'':=\exp((z_n'-z_n)\Wf)(B)=(z_n',t'')$. 

We have to show that there exists $L>0$ such that 
\begin{equation}\label{tesilipheisenberg}
|\f(B)-\f(B')|\leq L\df(B,B').
\end{equation}
We have
\begin{equation}\label{daci1agenerale1}
|\f(B)-\f(B')| \leq |\f(B)-\f(B^\ast)|+|\f(B^\ast)-\f(B'')|+|\f(B'')-\f(B')|
\end{equation}

Notice that from  Lemma \ref{L:DafLipschitzbis}
\begin{equation}\label{app19lugeq2tris}
|\f(B)-\f(B^\ast)|\leq \|(w_1,\dots,w_{n-1},w_{n+1},\dots,w_{2n-1})\|_{\rmLinf(\om,\R^{2n-2})}\ |\hat z_n'-\hat z_n|
\end{equation}
and then $|\f(B)-\f(B^\ast)|\leq \df(B,B')$, 
from  Lemma \ref{L:DafLipschitz}
\begin{equation}\label{app19lugeq2bis}
|\f(B^\ast)-\f(B'')|\leq \|w_{n}\|_{\rmLinf(\om)}\ |z_n'-z_n|
\end{equation}
and then $|\f(B^\ast)-\f(B'')|\leq \df(B,B')$.

By \eqref{app19lugeq2bis} we can apply Lemma \ref{L:holderode} and obtain
\begin{equation}\label{app19lugeq2}
|\f(B')-\f(B'')|\leq C \sqrt{|t'-t''|}
\end{equation}

Let's observe that
\begin{equation}\label{equaz3.9n}
\begin{array}{cl}
&\displaystyle|t'-t''|\\
=& \displaystyle\Bigl|t'-t+\sigma(\hat z_n',\hat z_n)-\int_0^{z_n'-z_n}\f(\exp(s\Wf)(B^\ast))\,ds\Bigr|\vspace{0.1cm}\\
\leq& \displaystyle\Bigl|t'-t-\frac{1}{2}(\f(B')+\f(B))(z_n'-z_n)+\sigma(\hat z_n',\hat z_n)\Bigr|+\\
& \displaystyle+\frac{1}{2}\Bigl|(\f(B')+\f(B))(z_n'-z_n)-2\int_0^{z_n'-z_n}\f(\exp(s\Wf)(B^\ast))\,ds \Bigr|\vspace{0.2cm}\\
\leq& \displaystyle\df(B',B)^2+\frac{1}{2}|\f(B')-\f(B'')|\ |z_n'-z_n|+\frac{1}{2}|\f(B^\ast)-\f(B)|\ |z_n'-z_n|+\\
& \displaystyle+\frac{1}{2}\Bigl|[\f(B'')+\f(B^\ast)](z_n'-z_n)-2\int_0^{z_n'-z_n}\f(\exp(s\Wf)(B^\ast)\,ds \Bigr|\vspace{0.2cm}\\ 
=:& \displaystyle\df(B',B)^2+R_1(B',B)+R_2(B',B)+R_3(B',B).
\end{array}
\end{equation}
For the case $n=1$ we arrive to \eqref{equaz3.9n} with the same line (it is sufficient to follow the same steps ``erasing'' the term $\sigma(\hat z_n',\hat z_n)$).

Now we want to prove that for all $\epsilon>0$ there is a $\de_\epsilon\in]0,\de_0]$ such that, for $\de\in]0,\de_\epsilon[$,
\begin{equation}\label{equaz3.11n}
R_1(B',B)\leq |z_n'-z_n|^2+\epsilon|t'-t''|
\end{equation}
for all $B',B\in I_\de(A)$ 
and that there exist 
$C_1,C_2>0$ such that
\begin{eqnarray}
& & R_2(B',B) \leq C_2 \df (B',B)^2\label{equaz3.12n}\\
& & R_3(B',B)\leq C_1|z_n'-z_n|^2\label{equaz3.10n}
\end{eqnarray}
for all $B',B\in I_{\de_0}(A)$,

 These estimates are sufficient to conclude: in fact, choosing $\epsilon:=1/2$ and using \eqref{equaz3.9n}, \eqref{equaz3.10n}, \eqref{equaz3.11n} and \eqref{equaz3.12n}, we get 
\begin{equation*}
|t'-t''| \leq \df(B',B)^2+C_1|z_n'-z_n|^2+|z_n'-z_n|^2+|t'-t''|/2+C_2\df(B',B)^2
\end{equation*}
whence
\begin{equation*}
|t'-t''|^{1/2}\leq C_3 \df(B,B')
\end{equation*}
which is $|\f(B')-\f(B'')|\leq \df(B,B')$ and then the thesis \eqref{tesilipheisenberg}.

By \eqref{app19lugeq2} we obtain
$$R_1(B',B)\leq 2C \sqrt{|t'-t''|}\ |z_n'-z_n|\leq \epsilon |t'-t''|+\frac{1}{\epsilon} |z_n'-z_n|^2,$$
 whence \eqref{equaz3.11n} follows. 
 
Observe that \eqref{equaz3.12n} follows from $R_2(B,B')=0$ if $n=1$, and from
\begin{eqnarray*}
R_2(B',B) &=& |z_n'-z_n||\f(B)-\f(B^\ast)|\\
&\leq& 2C_2|z_n'-z_n||\hat z_n'-\hat z_n|\leq C_2|(z_n'-z_n,\hat z_n'-\hat z_n)|^2\leq C_2\df(B',B)^2
\end{eqnarray*}
if $n\geq 2$.
Finally, for $s\in[-\de_0,\de_0]$ we can define 
\begin{equation}\label{defindig}
g(s):=2\int_0^s \f(\exp(r\Wf)(B^\ast))\,dr-\bigl[ \f\bigl(\exp(s\Wf)(B^\ast)\bigr)+\f(B^\ast)\bigr]s;
\end{equation}
We have $$g(s)=2\int_0^s \big[\f(\exp(r\Wf)(B^\ast))-\f(B^{\ast})\big]\,dr-\bigl[ \f\bigl(\exp(s\Wf)(B^\ast)\bigr)-\f(B^\ast)\bigr]s=o(s^2)$$
because $(-\de_0,\de_0)\ni s\mapsto\f(\exp(s \Wf)(B^\ast))$ is Lipschitz. Therefore \eqref{equaz3.10n} follows with $s=z_n'-z_n$. 
\end{proof}

\begin{corollary}\label{holder} Let $\omega\subset\W$ be an open set.
If $\f\in \mathrm C^0(\om)$ is a distributional solution of $\gf\f=w$ in $\omega$, then
\begin{equation}\label{tesicamillo}
|\f(A)-\f(B)|\leq C\sqrt{|A-B|}\qquad \forall\ A,B\in\omega
\end{equation}
\end{corollary}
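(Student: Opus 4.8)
The plan is to deduce the statement directly from Lemma~\ref{L:distrLip}, combined with an elementary comparison between the graph quasidistance $\df$ and the Euclidean distance. By Lemma~\ref{L:distrLip} we already know $\f\in \mathrm{Lip}_{\W,\loc}(\om)$, so fixing any $\om'\Subset\om$ there is a constant $L>0$, the intrinsic Lipschitz constant of $\f$ on $\om'$, such that $|\f(A)-\f(B)|\le L\,\df(A,B)$ for all $A,B\in\om'$, by Definition~\ref{D:deflip}. It therefore suffices to prove that on the bounded set $\om'$ one has $\df(A,B)\le C\sqrt{|A-B|}$; the constant $C$ will depend only on $\diam\om'$ and on $M:=\|\f\|_{\rmLinf(\om')}$, which is finite because $\f$ is continuous.

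To obtain this comparison I would use the explicit expression of $\df$ in Definition~\ref{fidistanzadef2708}. Writing $A=(z,t)$, $B=(z',t')$ with $z,z'\in\R^{2n-1}$, the quasidistance splits into a linear part $|z'-z|$ and a square-root part $|E|^{1/2}$, where $E=t'-t-\tfrac12(\f(A)+\f(B))(z_n'-z_n)+\sigma(\hat z_n,\hat z_n')$ (the $\sigma$-term being absent when $n=1$). The two correction terms vanish on the diagonal and are controlled by the $z$-increment: the term $\tfrac12(\f(A)+\f(B))(z_n'-z_n)$ is bounded by $M|z'-z|$, while $\sigma(\hat z_n,\hat z_n')$ is bilinear with $\sigma(\hat z_n,\hat z_n)=0$, so that $\sigma(\hat z_n,\hat z_n')=\tfrac12\sum_{j=1}^{n-1}\big(z_{j+n}(z_j'-z_j)-z_j(z_{j+n}'-z_{j+n})\big)$ is bounded by $C_\sigma|z'-z|$, with $C_\sigma$ depending only on $\sup_{\om'}|z|$. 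Hence $|E|\le |t'-t|+(M+C_\sigma)|z'-z|\le(1+M+C_\sigma)|A-B|$.

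Combining these bounds gives $\df(A,B)\le|z'-z|+\big((1+M+C_\sigma)|A-B|\big)^{1/2}$. Since on the bounded set $\om'$ one has $|z'-z|\le|A-B|\le\sqrt{\diam\om'}\,\sqrt{|A-B|}$, both summands are dominated by a constant multiple of $\sqrt{|A-B|}$, which yields $\df(A,B)\le C\sqrt{|A-B|}$ and therefore $|\f(A)-\f(B)|\le LC\sqrt{|A-B|}$, i.e.~\eqref{tesicamillo} on $\om'$. I do not expect a genuine obstacle here: the only point requiring care is that the linear-in-$z$ part of $\df$ is controlled by $\sqrt{|A-B|}$ only on bounded sets, which is exactly why the estimate is inherently local, consistent with the inclusion $\mathrm{Lip}_{\W,\loc}(\om)\subset \mathrm C^{0,1/2}_{\loc}(\om)$ recorded in the introduction. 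One reads the statement over all of $\om$ by letting $\om'$ exhaust $\om$, with $C=C(\om')$; in particular a single global constant is obtained whenever $\om$ is bounded and $\f$ is bounded on $\om$.
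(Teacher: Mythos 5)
Your proposal is correct and follows exactly the route the paper intends: the paper states Corollary~\ref{holder} without proof, as an immediate consequence of Lemma~\ref{L:distrLip} together with the elementary comparison $\df(A,B)\leq C\sqrt{|A-B|}$ on bounded sets (the inclusion $\mathrm{Lip}_{\W,\loc}(\om)\subset \mathrm C^{0,1/2}_{\loc}(\om)$ already recorded in the introduction), which is precisely what you spell out. Your remark that the estimate is inherently local (constants depending on $\om'$, $\|\f\|_{\rmLinf(\om')}$ and $\diam\om'$) is also the right reading of the statement, since Lemma~\ref{L:distrLip} only yields a local intrinsic Lipschitz constant.
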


\begin{lemma}
\label{L:lipDistr}
If  $\f\in \mathrm{ Lip}_{\W,\loc}(\om)$, then $\f$ is a distributional solution of $\gf\f=w$ in $\om$.
\end{lemma}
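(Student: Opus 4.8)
The plan is to realize $\f$ as a limit of smooth intrinsic regular graphs and to pass to the limit in the weak formulation of Definition~\ref{D:distribsol}. Since being a distributional solution is a local property — every $\varphi\in \mathrm C^\infty_c(\om)$ is supported in some $\om'\Subset\om$ — it suffices to argue on a fixed bounded open set $\om'\Subset\om$ and then let $\om'$ exhaust $\om$. On such an $\om'$, Theorem~\ref{T:tesipinamonti} furnishes a sequence $\{\f_k\}\subset \mathrm C^\infty(\om')$ and a function $w\in(\rmLinf_{\loc}(\om'))^{2n-1}$ with $\f_k\to\f$ uniformly on compact subsets, $|\nabla^{\f_k}\f_k|\le C$ $\mathcal L^{2n}$-a.e., $\nabla^{\f_k}\f_k\to w$ $\mathcal L^{2n}$-a.e., and $\gf\f=w$ $\mathcal L^{2n}$-a.e. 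This $w$ is the intended source; by Theorem~\ref{T:rademacher} it coincides $\mathcal L^{2n}$-a.e.\ with the pointwise intrinsic gradient of $\f$, so it does not depend on the chosen exhaustion and the local identities glue to a global one.

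Next, for each fixed $k$ I would set $w_k:=\nabla^{\f_k}\f_k$ and integrate by parts, with no boundary terms since $\varphi$ is compactly supported. For the linear components $j\neq n$ the field $\gf_j=X_{j+1}$ is divergence free, hence
\begin{equation*}
\int_{\om'}(w_k)_j\,\varphi\,d\mathcal L^{2n}=\int_{\om'}(\gf_j\f_k)\,\varphi\,d\mathcal L^{2n}=-\int_{\om'}\f_k\,\gf_j\varphi\,d\mathcal L^{2n},
\end{equation*}
while writing the nonlinear operator in divergence form, $\Wf\f_k=\partial_{z_n}\f_k+\tfrac12\partial_t(\f_k^2)$, gives
\begin{equation*}
\int_{\om'}(w_k)_n\,\varphi\,d\mathcal L^{2n}=-\int_{\om'}\Big(\f_k\,\partial_{z_n}\varphi+\tfrac12\f_k^2\,\partial_t\varphi\Big)\,d\mathcal L^{2n}.
\end{equation*}

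Finally I would pass to the limit $k\to\infty$ in both identities for fixed $\varphi\in \mathrm C^\infty_c(\om')$. On the source side, $(w_k)_j\to w_j$ $\mathcal L^{2n}$-a.e.\ together with the uniform bound $|(w_k)_j|\le C$ lets dominated convergence give $\int (w_k)_j\varphi\to\int w_j\varphi$; on the other side, $\f_k\to\f$ and hence $\f_k^2\to\f^2$ uniformly on $\supp\varphi$, so the integrals against $\gf_j\varphi$, $\partial_{z_n}\varphi$ and $\partial_t\varphi$ converge to the corresponding ones with $\f$. The resulting equalities are exactly the two weak identities of Definition~\ref{D:distribsol} for $\f$ with source $w$, established on each $\om'\Subset\om$ and therefore on all of $\om$. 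There is no genuine obstacle beyond the uniform $\rmLinf$ bound on $\nabla^{\f_k}\f_k$ supplied by Theorem~\ref{T:tesipinamonti}: this is precisely what upgrades the a.e.\ convergence of the sources to convergence of the tested integrals, and the only other point requiring mild care is the quadratic term $\f_k^2$, controlled by uniform convergence on compacta.
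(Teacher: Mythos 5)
Your proposal is correct and follows essentially the same route as the paper: invoke Theorem~\ref{T:tesipinamonti} to get smooth approximations $\f_k$ with uniformly bounded intrinsic gradients converging a.e.\ to $w$, write the weak identities for each $\f_k$ by integration by parts, and pass to the limit via uniform convergence of $\f_k$, $\f_k^2$ and dominated convergence for the sources. Your additional care about localizing on $\om'\Subset\om$ (since Theorem~\ref{T:tesipinamonti} is stated for bounded domains) and gluing via the Rademacher-type identification of $w$ is a sensible refinement that the paper's proof glosses over, but it is not a different argument.
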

\begin{proof}
 By Theorem~\ref{T:tesipinamonti} there exist $\{\f_k\}_{k\in \N}\subset \mathrm C^{\infty}(\om)$,  such that $\f_k$ uniformly converges to $\f$ on the compact sets of $\om$, $|\nabla^{\f_k}\f_k(A)|\leq C$ $\mathcal L^n$-a.e. $A\in\om$ for every $k\in\N$ and $\nabla^{\f_k}\f_k(A)\to w(A)$ $\mathcal L^n$-a.e. $A\in\om$.
Therefore, denoting $w_k:=\nabla^{\f_k}\f_k$, we have
for every $k\in\N$ and for every $\varphi\in \mathrm C^{\infty}_c(\om)$
$$\int_{\omega}\f_kX_j\varphi\,d\mathcal L^{2n}=-\int_{\omega}w_{j,k}\varphi\,d\mathcal L^{2n}, \qquad j=1,\dots,n-1,n+1,\dots,2n-1$$
$$\int_{\omega}\left(\f_k\dede{z_n}\varphi+\frac{1}{2}\f_k^2\dede{t}\varphi\right)\,d\mathcal L^{2n}=\,-\int_{\omega}w_{n,k}\varphi\,d\mathcal L^{2n}.$$
Getting to the limit for $k\to\infty$ we obtain
$$\int_{\omega}\f X_j\varphi\,d\mathcal L^{2n}=-\int_{\omega}w_{j}\varphi\,d\mathcal L^{2n}, \qquad j=1,\dots,n-1,n+1,\dots,2n-1$$
$$\int_{\omega}\left(\f\dede{z_n}\varphi+\frac{1}{2}\f^2\dede{t}\varphi\right)\,d\mathcal L^{2n}=\,-\int_{\omega}w_n\varphi\,d\mathcal L^{2n}$$
i.e. $\f$ is a distributional solution of the problem $\gf\f=w$.
\end{proof}

\section{Further Equivalences} 
\label{S:furthereq}

In the previous section we established the equivalence between
\[
\text{`$\phi:\omega\subset\W\to\R$ is intrinsic Lipschitz continuous'}
\]
and
\[
\text{`$\phi\in \mathrm C^{0}(\omega;\R)$ and there exists $w\in\mathrm  L^{\infty}(\omega;\R^{2n-1})$ such that $\nabla^{\phi}\phi=w$ in $\mathcal D'(\omega)$'.}
\]
We establish now in Section~\ref{S:distrbroad} a characterization more related to the Lagrangian formulation: we prove that one can reduce the PDE
\[
\nabla^{\phi}\phi=w
\]
along any integral line of the vector fields $\nabla^\phi_i$, $i=2,\dots,n$, provided that one chooses suitably the $\mathrm L^{\infty}(\omega;\R^{2n-1})$ representative $\hat w$ of the distribution identified by $w$.
We first prove in Section~\ref{Ss:distrtoLagr}, as an introduction, the weaker statement that one can reduce the PDE to ODEs along a selected family of characteristic constituting a Lagrangian parameterisation.
As well, the converse holds: if the ODEs on characteristics are satisfied one has a distributional solution to the PDE and the sources of the two formulations can be identified; this, as recalled in Lemma~\ref{L:lipDistr}, is known from~\cite{pinamonti} and we prove it differently in Section~\ref{Ss:lagrdistr} below.

The conclusion of this last section will be the following.

\begin{corollary}
The various notions of continuous solutions we have considered are equivalent.
\end{corollary}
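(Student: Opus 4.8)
The plan is to close a cycle of implications among the four notions of continuous solution (distributional, Lagrangian, broad and broad*), together with the intrinsic Lipschitz condition, drawing each arrow from a result established earlier in the paper as summarized by the papillon diagram of the introduction. First I would record the two-sided equivalence between the distributional formulation and the intrinsic Lipschitz condition: this is precisely Theorem~\ref{T:firsttheorem}, whose implication \textsc{distributional} $\Rightarrow$ \textsc{intrinsic Lipschitz} is Lemma~\ref{L:distrLip} and whose converse is Lemma~\ref{L:lipDistr}. This anchors the geometric notion to the analytic ones.

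Next I would assemble the cycle among the three Lagrangian-type notions. From a continuous distributional solution one passes to a broad solution through the selection argument of Theorem~\ref{T:univselection} combined with the Lipschitz-along-characteristics regularity of Lemma~\ref{L:DafLipschitz}, which produces the universal representative $\hat w\in\mfrl(\omega;\R^{2n-1})$ of Definition~\ref{D:broadsol}. From a broad solution one obtains a Lagrangian solution by exhibiting a concrete full Lagrangian parameterisation --- the one constructed in Lemma~\ref{L:globchi} --- and checking that the broad source restricts correctly to the ordered family of characteristics composing $\chi$; here the monotonicity of that family is exactly what makes the restriction admissible. Finally, from a Lagrangian solution one returns to a distributional solution via Theorem~\ref{T:converseDafermos}, which simultaneously identifies the two source terms as representatives of the same $\rmLinf(\omega;\R^{2n-1})$ class. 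Since \textsc{distributional} $\Rightarrow$ \textsc{broad} $\Rightarrow$ \textsc{Lagrangian} $\Rightarrow$ \textsc{distributional}, the three are equivalent, and by the previous paragraph equivalent to the intrinsic Lipschitz condition as well.

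It then remains to fold in the broad* notion of Definition~\ref{D:broad*sol}. Here I would invoke the remark closing Section~\ref{S:pderecalls}: a Lagrangian solution is in particular a broad* solution with the same $\bar w$, since the broad* requirement only demands the reduction along the exponential-map curves $\Gamma_i^A$ rather than along \emph{every} characteristic; conversely a broad solution is trivially a broad* solution, the family $\Gamma_i^A$ being a subfamily of all integral curves of $\nabla^\phi_i$. Thus broad* sits between broad and Lagrangian in logical strength and is caught inside the same cycle, completing the chain of equivalences.

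The step I expect to require the most care, and the genuine obstacle, is the bookkeeping of the source terms across the formulations. The distributional source is an $\rmLinf(\omega;\R^{2n-1})$ equivalence class, whereas the Lagrangian, broad and broad* sources are genuine $\mfrl(\omega;\R^{2n-1})$ functions whose pointwise values on the Lebesgue-null set of non-intrinsic-differentiability points genuinely matter, as Examples~\ref{Ex:1} and~\ref{Ex:2} show. The consistency clauses (B.1)/(LS2)/(E.3)--(E.4), together with the Rademacher-type Theorem~\ref{T:rademacher}, guarantee that all these sources coincide $\mathcal L^{2n}$-almost everywhere and represent the single distributional source; I would verify that at each arrow the representative carried along is compatible with this identification, so that no inconsistency arises in traversing the cycle.
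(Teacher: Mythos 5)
Your assembly of the core cycle is the same as the paper's: distributional $\Leftrightarrow$ intrinsic Lipschitz via Lemmas~\ref{L:distrLip} and~\ref{L:lipDistr}; distributional $\Rightarrow$ broad via Theorem~\ref{T:univselection} together with Lemma~\ref{L:DafLipschitz}; broad $\Rightarrow$ Lagrangian by restricting the broad source to the full parameterisation of Lemma~\ref{L:globchi}; and Lagrangian $\Rightarrow$ distributional, with identification of the source terms, via Theorem~\ref{T:converseDafermos}. Your final paragraph on the bookkeeping of representatives also matches the paper's remark on the identification of sources. Up to that point the proposal is correct.

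The genuine gap is in how you ``fold in'' the broad* notion of Definition~\ref{D:broad*sol}. Both implications you state --- ``a Lagrangian solution is in particular a broad* solution'' and ``a broad solution is trivially a broad* solution'' --- point \emph{into} broad*: they show only that broad* is a consequence of the notions already proved equivalent. Despite your word ``conversely'', you never produce an arrow \emph{out} of broad*, i.e.\ an implication of the form broad* $\Rightarrow$ Lagrangian (or broad* $\Rightarrow$ distributional), so the claim that broad* ``sits between broad and Lagrangian in logical strength and is caught inside the same cycle'' is unsupported; with only incoming arrows, broad* could a priori be a strictly weaker notion, and the stated equivalence would fail to be proved for it. The paper closes this loop with the dotted arrow of its diagram from Continuous Broad* to Continuous Lagrangian, justified by the remark at the end of Section~\ref{S:pderecalls}: given a broad* solution, one reruns the construction of Lemma~\ref{L:globchi} with the exponential-map curves $\Gamma_i^A$ in place of the curves $\gamma(\cdot\,;\bar s,\bar t)$, exploiting their ordering/monotonicity to obtain a full Lagrangian parameterisation along whose curves the reduction holds with the same $\bar w$; this yields broad* $\Rightarrow$ Lagrangian, after which Theorem~\ref{T:converseDafermos} re-enters your cycle. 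You need this step (or a substitute, such as a direct proof that broad* $\Rightarrow$ distributional in the spirit of Theorem~\ref{T:converseDafermos}) for the corollary as stated. A minor additional point: your justification of Lagrangian $\Rightarrow$ broad* compares broad* with broad rather than with Lagrangian; the paper's route is through the distributional formulation (a Lagrangian solution is distributional, and the parameterisation supplies the exponential maps), which is the cleaner way to phrase that arrow.
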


\subsection{Distributional solutions are Lagrangian solutions}
\label{Ss:distrtoLagr}
The present section is an introduction to the next one, which proves the stronger statement that distributional solutions are broad solutions.
Being technically simpler, this section provides a guideline for some ideas implemented next.
One can in particular notice that the proof concerns only ODEs.

In order to avoid technicalities we focus on $n=2$ and on $\omega=\tilde\omega=[0,1]^{2}$. We refer to Section~\ref{S:distrbroad} for the stronger statement with the universal source term $\hat w$.
The function~$\bar w$ below relative to the Lagrangian formulation has not yet been related with the RHS of~\eqref{E:balance}: their identification will come from Theorem~\ref{T:converseDafermos}.

\begin{lemma}
\label{L:gfunztx}
Let $\phi$ be a continuous function. 
Consider a Lagrangian parameterisation $(\tilde\omega,\chi(s,\tau))$ 
and assume that $\phi(s, \chi(s,\tau))$ is Lipschitz in $s$ for all $\tau$. Then there exists a Borel function $\bar w:\omega\to\R$ such that for all $\tau$
\[
\partial_{s}\phi(s, \chi (s,\tau))= \partial_{ss} \chi (s,\tau) = \bar w(s, \chi (s,\tau)) 
\qquad
\text{for $\Ll^{1}$-a.e.~$s$.}
\]
\end{lemma}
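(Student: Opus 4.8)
The plan is to treat the first equality as essentially built into the notion of Lagrangian parameterisation, and to concentrate the real work on transporting the second $s$-derivative of $\chi$ from the parameter domain $\tilde\omega$ back to $\omega$.

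First I would record the regularity along each characteristic. By condition (L.3) the curve $s\mapsto\chi(s,\tau)$ solves $\partial_{s}\chi(s,\tau)=\phi(s,\chi(s,\tau))$, and by hypothesis the right-hand side is Lipschitz in $s$; hence $\chi(\cdot,\tau)\in C^{1,1}$ for every $\tau$, the derivative $\partial_{ss}\chi(s,\tau)$ exists for $\mathcal L^{1}$-a.e.~$s$, is bounded by the Lipschitz constant, and satisfies
\[
\partial_{ss}\chi(s,\tau)=\partial_{s}\big[\phi(s,\chi(s,\tau))\big]
\qquad\text{for }\mathcal L^{1}\text{-a.e. }s.
\]
This already gives the first equality, and the function $\tilde w(s,\tau):=\partial_{ss}\chi(s,\tau)$, set equal to $0$ where the derivative fails to exist, is a bounded Borel function on $\tilde\omega$: joint measurability follows from writing it as a countable $\limsup$ of difference quotients of the Borel function $\chi$.

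Next I would define the candidate source $\bar w$ on $\omega$ by pushing $\tilde w$ through $\Upsilon$. Since by (L.2) the map $\tau\mapsto\chi(s,\tau)$ is nondecreasing for each fixed $s$, for $(s,t)\in\omega$ the preimage $\{\tau:\chi(s,\tau)=t\}$ is an interval; I would take its left endpoint $\tau^{*}(s,t):=\inf\{\tau:\chi(s,\tau)\ge t\}$, a Borel selection thanks to the monotonicity of $\chi(s,\cdot)$, and set $\bar w(s,t):=\tilde w(s,\tau^{*}(s,t))$, extended by $0$ off the range. It then remains to check, for every fixed $\tau_{0}$ and $\mathcal L^{1}$-a.e.~$s$, that $\bar w(s,\chi(s,\tau_{0}))=\tilde w(s,\tau_{0})$, i.e.~that replacing $\tau_{0}$ by the selected $\tau^{*}$ does not change the value.

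This consistency is the heart of the matter and the step I expect to be the main obstacle, precisely because characteristics may collapse and split (as in Examples~\ref{Ex:1},~\ref{Ex:2}), so the selected parameter genuinely varies with $s$. The clean fact I would exploit is that for two \emph{fixed} parameters $\tau_{1}<\tau_{2}$ the nonnegative function $h:=\chi(\cdot,\tau_{2})-\chi(\cdot,\tau_{1})$ is $C^{1,1}$ and vanishes on its coincidence set $Z_{\tau_{1},\tau_{2}}$; hence $h'=0$, and then $h''=0$, hold $\mathcal L^{1}$-a.e.~on $Z_{\tau_{1},\tau_{2}}$, so $\partial_{ss}\chi(\cdot,\tau_{1})=\partial_{ss}\chi(\cdot,\tau_{2})$ a.e.~there. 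Fixing $\tau_{0}$, the selection can only fail where $\tau^{*}(s,\chi(s,\tau_{0}))<\tau_{0}$; by monotonicity every rational $q$ strictly between $\tau^{*}(s,\chi(s,\tau_{0}))$ and $\tau_{0}$ then satisfies $\chi(s,q)=\chi(s,\tau_{0})$, so this bad set is covered by the countably many coincidence sets $Z_{q,\tau_{0}}$, $q\in\mathbb Q$. I would conclude by combining this covering with the monotone structure of $s\mapsto\tau^{*}(s,\chi(s,\tau_{0}))$: on the region where this selector is locally constant the selected characteristic coincides with $\chi(\cdot,\tau_{0})$ on an entire interval and equality is immediate, while on its strict-increase region one controls the discrepancy through the sets $Z_{q,\tau_{0}}$. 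The delicate point is that $\partial_{ss}\chi$ need not be continuous in $\tau$, so pinning the selected value down to the fixed comparison parameters $q$ at $\mathcal L^{1}$-a.e.~$s$ is exactly where the essential care is required.
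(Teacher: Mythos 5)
Your opening step is sound and coincides with the paper's: along each characteristic $\partial_s\chi(\cdot,\tau)=\phi(\cdot,\chi(\cdot,\tau))$ is Lipschitz, so $\partial_{ss}\chi$ exists for $\Ll^1$-a.e.\ $s$ and is Borel on $\tilde\omega$, which gives the first equality. The proposal breaks at the definition of $\bar w$: the left-endpoint selection $\tau^*$ is blind to whether the selected curve $\chi(\cdot,\tau^*)$ actually possesses a second derivative at the time $s$ in question, and at merging points the extremal parameter of the preimage interval is typically the characteristic that merges \emph{exactly} at time $s$, whose derivative has a corner there; your convention $\tilde w:=0$ is then invoked on a full-measure set of times, producing the wrong value. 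Concretely, on $\omega=(0,1)\times\R$ take
\[
\phi(s,t)=s+2\sqrt{\big(\tfrac{s^{2}}{2}-t\big)_{+}},
\qquad
\chi(s,\tau)=
\begin{cases}
\tfrac{s^{2}}{2}-\big((-\tau-s)_{+}\big)^{2}, & \tau\le 0,\\
\tfrac{s^{2}}{2}+\tau, & \tau>0,
\end{cases}
\]
which is a full Lagrangian parameterisation: $\chi(s,\cdot)$ is nondecreasing, continuous and onto $\R$, and $\partial_s\chi(s,\tau)=s+2(-\tau-s)_{+}=\phi(s,\chi(s,\tau))$ is $3$-Lipschitz in $s$. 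For $\tau_0=0$ one has $\chi(s,0)=s^2/2$ and $\partial_{ss}\chi(s,0)\equiv 1$, while the preimage of $\chi(s,0)$ at time $s$ is $[-s,0]$, so $\tau^*(s,\chi(s,0))=-s$. But $r\mapsto\chi(r,-s)=r^2/2-\big((s-r)_{+}\big)^2$ has one-sided second derivatives $-1$ and $+1$ at $r=s$, hence no second derivative there, so $\bar w(s,\chi(s,0))=0\neq 1=\partial_{ss}\chi(s,0)$ for \emph{every} $s\in(0,1)$. The conclusion of the lemma fails for your $\bar w$ along an entire characteristic, so no consistency argument can repair this choice; the right endpoint fails symmetrically when characteristics merge from above, as in Example~\ref{Ex:2}.

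This also locates the error in the consistency sketch: all your coincidence-set statements are of the form ``for a.e.\ $s$, for a \emph{fixed} pair of parameters'', and in the example above they all hold (for fixed $q\in(-1,0)$ the only bad point of $Z_{q,0}=[-q,1)$ is $s=-q$); but the selector $s\mapsto\tau^*(s,\chi(s,\tau_0))=-s$ runs through uncountably many parameters and hits precisely the single exceptional point of each one, so the fixed-parameter null sets do not add up to a null set after composition with the selector. (Moreover $s\mapsto\tau^*$ has no monotone structure in general, since characteristics may merge and split repeatedly.) This is exactly why the paper's proof does two things differently: it selects the parameter \emph{inside} the Borel set $B\subset\tilde\omega$ where $\partial_{ss}\chi$ exists, using Srivastava's cross-section theorem and Lusin's theorem (\cite{Sri}) to obtain a Borel inverse $\Xi$ of $\Upsilon$ valued in $B$, so that the selected curve always carries a second derivative; and it disposes of the residual ambiguity---two parameters in $B$ over the same point with different second derivatives---by a geometric argument: such curves separate quadratically from each other, hence by the ordering property (L.2) they cannot cross, and such points are at most countable on each characteristic. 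Your observation that $h''=0$ a.e.\ on $\{h=0\}$ for a fixed pair of curves is a correct ingredient (it can even be sharpened to every non-isolated zero at which $h''$ exists, which is the germ of the paper's countability argument), but without a selection constrained to $B$ and the countability of isolated quadratic touches, the argument cannot be closed.
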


\begin{corollary}
\label{C:lipordistrlagr}
If a function $\phi:\omega\subset\W\to \R$ is
\begin{itemize}
\item[-] either an intrinsic Lipschitz continuous function
\item[-] or a continuous distributional solutions $\phi$ to the balance law~\eqref{E:balance}
\end{itemize}
then it is a Lagrangian solution to the equation $\Wf\f=w$.
\end{corollary}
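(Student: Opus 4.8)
The plan is to deduce Corollary~\ref{C:lipordistrlagr} by combining the previously established equivalence of Theorem~\ref{T:firsttheorem} with the existence of Lagrangian parameterisations from Section~\ref{S:existencepartialpar} and the regularity along characteristics furnished by Lemma~\ref{L:DafLipschitz}, and finally by applying Lemma~\ref{L:gfunztx} to produce the Lagrangian source term. First I would observe that the two hypotheses are in fact the same: by Theorem~\ref{T:firsttheorem} (equivalently Lemmas~\ref{L:distrLip} and~\ref{L:lipDistr}) a continuous function $\phi$ is intrinsic Lipschitz if and only if it is a continuous distributional solution of $\gf\phi=w$ for some $w\in\rmLinf_{\loc}(\omega;\R^{2n-1})$. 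Hence it suffices to treat the distributional case, reducing to $n=1$ and the balance law~\eqref{E:balance} as announced.

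So let $\phi\in\mathrm C^0(\omega)$ be a distributional solution of $\Wf\phi=w$ with $w\in\rmLinf(\omega)$. The first step is to produce a Lagrangian parameterisation $(\tilde\omega,\chi(s,\tau))$ associated to $\phi$: this is exactly what Lemma~\ref{L:globchi} (or Lemma~\ref{L:solODE} together with the filling construction) provides, since $\phi$ is continuous and bounded, so the continuous vector field $(1,\phi)$ has integral curves covering $\omega$ and these can be chosen ordered and monotone in the sense of Definition~\ref{D:Lagrparam}. The second step is to check that the hypothesis of Lemma~\ref{L:gfunztx} is met, namely that $s\mapsto\phi(s,\chi(s,\tau))$ is Lipschitz for every $\tau$. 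This is precisely the content of Lemma~\ref{L:DafLipschitz}: along any characteristic curve $\gamma$ with $\dot\gamma(s)=\phi(s,\gamma(s))$, the distributional solution $\phi$ is $\|w\|_\infty$-Lipschitz. Since each slice $s\mapsto\chi(s,\tau)$ is by construction such a characteristic, the composition $\phi(s,\chi(s,\tau))$ is Lipschitz in $s$ uniformly in $\tau$.

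With these two ingredients in place, Lemma~\ref{L:gfunztx} applies directly and yields a Borel function $\bar w:\omega\to\R$ with
\[
\partial_s\phi(s,\chi(s,\tau))=\partial_{ss}\chi(s,\tau)=\bar w(s,\chi(s,\tau))
\qquad\text{for $\Ll^1$-a.e.~}s,\ \forall\tau,
\]
which is the absolute continuity and evolution identity (LS1) of Definition~\ref{D:Lagrsol}. Boundedness of $\bar w$ follows from the uniform Lipschitz bound $\|w\|_\infty$. To finish, I would verify condition (LS2): if $\phi$ admits the $\Wf$-derivative at a point $A$ in the sense of Definition~\ref{D:gfderivative}, then the value $\bar w(A)$ coincides with $\Wf\phi(A)$, because at such a point the derivative along \emph{every} characteristic through $A$ agrees, and in particular along the one belonging to $\chi$. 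This identifies $\bar w$ as a genuine Lagrangian source and establishes that $\phi$ is a Lagrangian solution of $\Wf\phi=w$.

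The main obstacle, and the place where care is needed, is the measurability and the passage between the Lagrangian coordinates $(s,\tau)$ and the Eulerian coordinates $(s,t)$: the map $\chi$ is continuous and monotone but neither injective nor of bounded variation, so defining $\bar w$ as a function on $\omega$ (rather than on $\tilde\omega$) requires choosing a value of $\partial_{ss}\chi$ where several characteristics collapse, and checking that this choice is Borel and independent of the selection on the relevant null sets. This subtlety is exactly what Lemma~\ref{L:gfunztx} already absorbs, so in the corollary it reduces to quoting that lemma; the only genuinely new verification is (LS2), which is immediate from Definition~\ref{D:gfderivative} since the $\gf_i$-derivative, when it exists, is insensitive to the particular characteristic chosen. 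I would note that the identification of $\bar w$ with the distributional source $w$ in $\rmLinf(\omega)$ is deferred to Theorem~\ref{T:converseDafermos}, as the statement of Definition~\ref{D:Lagrsol} deliberately does not demand it here.
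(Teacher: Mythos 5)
Your proposal is correct and its core is the same as the paper's: produce a Lagrangian parameterisation via Lemma~\ref{L:globchi}, obtain Lipschitz regularity of $\phi$ along characteristics, and conclude by Lemma~\ref{L:gfunztx}. The one genuine difference is how you handle the intrinsic Lipschitz branch: you first convert it into the distributional case via Theorem~\ref{T:firsttheorem} (i.e.\ Lemma~\ref{L:lipDistr}) and then invoke Lemma~\ref{L:DafLipschitz}, whereas the paper treats that branch directly, using the metric observation recorded just after Definition~\ref{D:deflip} that an intrinsic Lipschitz function is automatically Lipschitz in the classical sense along any integral curve of $\Wf$ (the graph distance $\df$ controls the increment of $\phi$, and along a characteristic $\df$ is comparable to the increment of the $s$-variable). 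Your route is logically sound but heavier: Lemma~\ref{L:lipDistr} rests on the approximation result of Theorem~\ref{T:tesipinamonti} from the literature, while the paper's direct argument for this branch needs nothing beyond the definition of $\df$; on the other hand, your reduction lets you treat both hypotheses uniformly in a single case. A point in your favour: you explicitly verify condition (LS2) of Definition~\ref{D:Lagrsol}, observing that whenever the $\gf$-derivative exists at $A$ in the sense of Definition~\ref{D:gfderivative} it is the same along every characteristic through $A$, so it necessarily agrees with the value $\bar w(A)$ produced by Lemma~\ref{L:gfunztx}; the paper's proof leaves this check implicit in the phrase ``directly by applying Lemma~\ref{L:gfunztx}'', so your spelling it out is a useful completion rather than a deviation.
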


\begin{proof}[Proof of the corollary]
The existence of a Lagrangian parameterisation has been proved in Lemma~\ref{L:globchi}.
As recalled just after the Definition~\ref{D:deflip}, an intrinsic Lipschitz continuous function is continuous and Lipschitz along characteristics.
Also continuous distributional solutions to the balance law~\eqref{E:balance} are Lipschitz continuous along characteristics by Lemma~\ref{L:DafLipschitz}, following~\cite{Dafermos}). 
We have therefore the thesis directly by applying Lemma~\ref{L:gfunztx}.
\end{proof}

\begin{proof}[Proof of Lemma~\ref{L:gfunztx}]

We remind the notation
\[
	\Upsilon{}:\tilde\omega \to\omega
		\qquad
	\Upsilon{}(s,\tau):=(s,\chi (s,\tau)).
\]

{\sf Second derivative in $\tilde\omega$.}
By assumption $\partial_{s}\chi (s,\tau)=\phi(s,\chi (s,\tau))$ is Lipschitz in $s$.
Being $\phi$ continuous, one can see that the subset $B\subset\tilde\omega $ of those $(s,\tau)$ where $\chi (s,\tau) $ is twice $s$-differentiable is $F_{\sigma\delta}$, and $\partial_{ss} \chi (s,\tau)$ is a Borel function on it. Moreover, by Rademacher's theorem the $\tau$-sections of $B$ have full measure and therefore by Tonelli theorem also $B$ has full measure.
However, the function $\partial_{ss} \chi (s,\tau)$ is defined on $\tilde\omega $, while we are looking for a function defined on $\omega$.

{\sf A preliminary comment.}
In order to check that $\Upsilon{}$ lifts $\partial_{ss} \chi  $ to a map $\bar w$ a.e.~defined on $\omega$, which would provide our thesis, it would be natural to show that
\begin{itemize}
\item $\Upsilon{}(B)$ is a Lebesgue measurable subset of $\omega$ with full measure;
\item $\partial_{ss} \chi (s,\tau) $ is constant on the level set of $\Upsilon{}$ intersected with $B$.
\end{itemize}
Since $\chi $ is not Lipschitz, we do not manage to prove at this point that $\Upsilon{}(B) $ has full measure. We instead assign a specific value, $0$, to the function out of $\Upsilon{}(B) $. This choice of the extension does not affect our claim.
We notice moreover that the second point is true in that strong form, but we show that the points of the level set of $\Upsilon{}$ corresponding to more values of $\partial_{ss}\chi$ are not relevant. 

{\sf Analysis of $\Upsilon{}(B)$ and partial inverse of $\Upsilon{}$.}
The proof of the Borel measurability of $\Upsilon{}(B)$ requires some technicality: we apply a theorem due to Srivastava (\cite{Sri}, Theorem 5.9.2) deriving that there exists a Borel restriction $\chi $ which is one-to-one to $\Upsilon{}(B)$; then Theorem 4.12.4 in~\cite{Sri}, due to Lusin, would provide the thesis.
In order to apply the first theorem, we partition $\tilde\omega $ into the level sets of $\Upsilon{}$, which are $G_{\delta}$.
We need also to observe that $\Upsilon{}^{-1}(\Upsilon{})(O)$ is Borel for each open set $O$. For simplicity, consider the case when $\chi $ is already a full parameterisation and thus it is continuous. Every open set $O$ is $\sigma$-compact: thus by continuity $\Upsilon{}(O)$ is $\sigma$-compact, and finally $\Upsilon{}^{-1}(\Upsilon{})(O)$ is $\sigma$-compact. Therefore by Srivastava's theorem there is a Borel cross section $S$ for the partition: $\Upsilon{}$ restricted to $S\cap B$ is Borel, injective and onto $\Upsilon{}(B)$. Being a Borel image by a one-to-one map, Lusin's theorem asserts on one hand that that $\Upsilon{}(B)$ is Borel, and moreover that this restriction has a Borel inverse
\[
\Xi:\omega\to\tilde\omega .
\]

Maybe there is a more elementary argument which allows to approximate $B$ with a $\sigma$-compact subset $B_{K}$ whose $\tau$-section have full measure. Then one could as well work with $B_{K}$ instead of $B$ avoiding measurability difficulties, even without investigating the size of its image.

{\sf Analysis of $\partial_{ss} \chi (s,\tau) $ for $\tau\in C$.}
We can define $\bar w$ as
\[
\bar w(s,t)=
\begin{cases}
\partial_{ss} \chi (\Xi(s,\tau))
&(s,t)\in \Upsilon{}( B),
\\
0
& \text{otherwise.}
\end{cases}
\]
This map satisfies the claim by the next analysis of the set where $(\partial_{ss}\chi)\circ \Upsilon{}^{-1}$ is multivalued.

{\sf Analysis of multivalued points of $(\partial_{ss}  \chi) \circ\Upsilon{}^{-1} $.}
The set of points $(s,\tau)\in B$ such that there exists $(\bar s, \bar \tau)\in B$ satisfying $\Upsilon{}(s,\tau)=\Upsilon{}(\bar s, \bar \tau)$ and $\partial_{ss} \chi(s,\tau)\neq \partial_{ss} \chi(\bar s,\bar \tau)$ is Borel: just see it as
\[
M:=\Upsilon{}^{-1}\circ\Upsilon{}\big(\big\{(s,\tau)\in B: \qquad|\partial_{ss} \chi(s,\tau)-\partial_{ss} \chi(\Xi(\Upsilon{}(s,\tau))|>0\big\}\big).
\]
The value of $\bar w$ on this set is not at all relevant: we show below that for any $\tau$ the intersection of $\Upsilon{}(M)$ with the characteristics curve $\{\chi(s, \tau)\}_{s\in[0,1]}$ is at most countable.
\\
Indeed, fix a curve $\gamma(s)=\chi(s,\tau)$, for simplicity fix $ \gamma(s)\equiv 0$.
Suppose $\gamma'(s)=\chi(s,\tau')$ intersects $\gamma(s)$ at $s=\bar s$ with $\ddot\gamma'(\bar s)=\partial_{ss} \chi( \bar s,\tau)>1/m$, $m\in\N$ fixed.
By Taylor's expansion, there exists a neighborhood $(\bar s -\delta, \bar s +\delta)$ where
\[
\frac{(s-\bar s)}{3m}  \geq \gamma'(s) \geq \frac{2(s-\bar s)}{3m}.
\]
There can be at most $1/\delta$ such values $\bar s$ of $s$: otherwise by the last inequality two characteristics $\gamma'_{1}, \gamma'_{2}$ relative to two $\bar s_{1}<\bar s_{2}$ with $|s_{2}-s_{1}|\leq \delta$ would intersect at $\xi\in (s_{1},s_{2})$ and cross each other.
Taking the union for a sequence $\delta_{i}\downarrow 0$, and then the union over $m\in\N$, we get the thesis.
\end{proof}

\subsection{Distributional solutions are broad solutions}
\label{S:distrbroad}

Below we show Theorem~\ref{T:univselection}: there exists a Borel function $\hat w(y,t)$ such that every curve $\gamma(s)$ satisfying the ODE with continuous coefficient $\dot\gamma(s)=\phi(s,\gamma(s))$ has time derivative Lipschitz, and it has second derivative precisely $\hat w(s,\gamma(s))$ for a.e.~$s$.
The remarkable fact is that $\hat w(y,t)$ could be defined independently of any set of characteristic curves.
This is thus different, and stronger, from what we already proved, which is that there exists a Lagrangian parameterisation $\chi$ and an associated function $w_\chi$ satisfying $\partial^{2}_{s}\chi_{}(s,\tau)= w_\chi(s,\chi_{}(s,\tau))$.
The new point is indeed that $\hat w$ is a universal representative for the source term.

Due to its nature this proof works for any $n$ with no further complication. The only difference is that $\omega$ will be a subset of $\R^{2n}$ instead of $\R^{2}$. We write it with $n=1$ only for notational convenience. In particular we generalize here the previous Lemma~\ref{L:gfunztx}.

Since the argument is more intuitive, we mention first how to construct such a Souslin function $\hat w(y,t)$. We proceed then with the Borel construction because it gives a better result.

\subsubsection{Souslin selection}
\label{Sss:souslSel}
The first step is to define pointwise, but in a measurable way, a function $\hat w(y,t)$ such that $t$ is a Lebesgue point for the second derivative of a curve $s\mapsto\gamma(s)$ with $\gamma(y)=t$ and satisfying the ODE, whenever there exists one. Therefore one applies Von Neuman's selection theorem (Section 5.5 in~\cite{Sri}, from \cite{Dub}) to the subset of
\[
\omega\times \mathrm C^{1}\times\R
\supset
\mathcal G
\ni
(y,t,\gamma,\zeta)
\]
defined by
\[
\mathcal G^{}=
\bigg\{
\gamma(y)=t,
\  \dot\gamma =\phi\circ (\Id\otimes\gamma),
\ |\dot\gamma(r)-\dot\gamma (s)|\leq \norm{w}|r-s|,
\ \zeta=\lim_{\sigma\downarrow 0}\frac{1}{\pm \sigma}\int_{y}^{y\pm\sigma} \ddot\gamma(s)ds
\bigg\}.
\]
\begin{lemma}
$\mathcal G$ is Borel. It has full measure projection on $(y,t)\in \omega $.
\end{lemma}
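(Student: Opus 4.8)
The plan is to establish two assertions about the set
\[
\mathcal G=
\bigg\{
(y,t,\gamma,\zeta):\ \gamma(y)=t,
\ \dot\gamma =\phi\circ (\Id\otimes\gamma),
\ |\dot\gamma(r)-\dot\gamma (s)|\leq \norm{w}|r-s|,
\ \zeta=\lim_{\sigma\downarrow 0}\frac{1}{\pm \sigma}\int_{y}^{y\pm\sigma} \ddot\gamma(s)\,ds
\bigg\}:
\]
first that $\mathcal G$ is Borel in $\omega\times \mathrm C^1\times\R$, and second that its projection onto the $(y,t)$-variables is all of $\omega$ up to a Lebesgue-null set. For the Borel measurability I would dissect $\mathcal G$ into the intersection of the conditions defining it and check each is Borel. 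The evaluation constraint $\gamma(y)=t$ and the ODE $\dot\gamma=\phi\circ(\Id\otimes\gamma)$ are closed conditions in the joint topology, using that $(y,\gamma)\mapsto\gamma(y)$ is continuous on $\omega\times \mathrm C^1$ and that $\phi$ is continuous; the Lipschitz bound on $\dot\gamma$ can be written as a countable intersection over rational $r,s$ of closed conditions, hence is closed. The genuinely delicate clause is the last one, prescribing that $\zeta$ equals the symmetric Lebesgue limit of $\ddot\gamma$ at $y$: this requires that the limit exist and take the value $\zeta$. I would express the existence-and-value of such a limit as a countable combination (intersections over rational thresholds, unions over a countable neighbourhood basis in $\sigma$) of conditions each measurable in $(y,\gamma,\zeta)$, using that $\ddot\gamma$ exists $\Ll^1$-a.e.\ by the Lipschitz bound on $\dot\gamma$ and that $\sigma\mapsto\frac1\sigma\int_y^{y+\sigma}\ddot\gamma$ depends Borel-measurably on the data; this yields at worst an $F_{\sigma\delta}$-type set, hence Borel.

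For the full-measure projection I would fix $(y,t)\in\omega$ and produce, whenever possible, a quadruple in $\mathcal G$ over it. The existence of \emph{some} integral curve $\gamma$ with $\gamma(y)=t$ satisfying the ODE is guaranteed by Peano's theorem, since $\phi$ is continuous and bounded; Lemma~\ref{L:DafLipschitz} then certifies that along any such curve the map $s\mapsto\phi(s,\gamma(s))=\dot\gamma(s)$ is $\norm{w}$-Lipschitz, so the third clause is automatic and $\ddot\gamma$ exists for $\Ll^1$-a.e.\ $s$. Thus the only obstruction to lying in $\mathcal G$ is that the symmetric limit defining $\zeta$ fail to exist at the specific base point $s=y$. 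The key observation is that this can be repaired by a Fubini/Tonelli argument: for each fixed curve the set of $s$ at which $\ddot\gamma$ has no Lebesgue point is $\Ll^1$-null, and integrating over the base points in $\omega$ (along the selected curves, exactly as in the proof of Lemma~\ref{L:gfunztx} where one slices $\tilde\omega$ and invokes Rademacher plus Tonelli) shows that the set of $(y,t)$ admitting no valid $\zeta$ is $\Ll^2$-null.

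The step I expect to be the main obstacle is the second one, controlling the projection, because $\zeta$ must be pinned down at the \emph{single} base point $s=y$ rather than at a generic point of the curve. The clean way around this is to reduce to an a.e.\ statement by a slicing argument: cover $\omega$ by a Lagrangian parameterisation $(\tilde\omega,\chi)$ as furnished by Lemma~\ref{L:globchi}, along whose characteristics $\phi$ is Lipschitz; for $\Ll^1$-a.e.\ parameter $\tau$ the curve $s\mapsto\chi(s,\tau)$ has $\partial_{ss}\chi(s,\tau)$ existing as a genuine Lebesgue point for $\Ll^1$-a.e.\ $s$, and one then transports this back through $\Upsilon(s,\tau)=(s,\chi(s,\tau))$ to conclude that the bad base points fill at most an $\Ll^2$-null subset of $\omega$, again via Tonelli. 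I would therefore phrase the argument so that the null projection defect is inherited directly from the a.e.\ twice-differentiability established in the previous subsection, rather than reproved from scratch.
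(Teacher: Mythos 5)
Your Borel-measurability argument is correct, and it takes a genuinely different route from the paper's. The paper replaces the continuum limit $\sigma\downarrow 0$ by the limit along the special sequence $h_{n+1}=h_n-h_n^2$, using the bound $|\ddot\gamma|\leq\norm{w}$ to show that convergence along $\{h_n\}_n$ already forces the full limit to exist, so that the last clause becomes a countable combination of conditions each involving a single $\sigma$. You instead can use that, $\dot\gamma$ being Lipschitz and continuous, $\int_y^{y\pm\sigma}\ddot\gamma=\dot\gamma(y\pm\sigma)-\dot\gamma(y)$ and the map $(y,\gamma,\zeta)\mapsto\zeta-\frac{\dot\gamma(y\pm\sigma)-\dot\gamma(y)}{\pm\sigma}$ is continuous for each fixed $\sigma$; hence the clause ``$\bigl|\zeta-\frac{\dot\gamma(y\pm\sigma)-\dot\gamma(y)}{\pm\sigma}\bigr|\leq 2^{-k}$ for every $\sigma\in(0,1/m)$'' is closed (an arbitrary intersection of closed sets), and $\mathcal G=C\cap\bigcap_k\bigcup_m\{\cdots\}$ is Borel. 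Both are valid; note only that the paper's discretization is reused later (Lemma~\ref{L:BorelApprg}, where $\sigma$-compact sections are needed for the Arsenin--Kunugui selection), so it is not a wasted detour.

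The full-measure projection, however, has a genuine gap, and it is exactly the obstruction the paper itself flags. Your reduction shows that the bad set $N\subset\omega$ (points over which no admissible $\zeta$ exists) satisfies $N\subset\Upsilon(\tilde\omega\setminus B)$, where $B\subset\tilde\omega$ is the full-measure set of $(s,\tau)$ at which $\partial_{ss}\chi(s,\tau)$ exists: that is, $N$ is contained in the \emph{image under $\Upsilon$ of a null set}. Since $\chi$ is merely continuous and monotone in $\tau$ --- it is not Lipschitz, and in general not even BV (Appendix~\ref{S:extPar} and~\cite{abc}) --- the map $\Upsilon$ need not satisfy Luzin's property (N): a continuous monotone surjection can map a null set onto a set of full measure, as the Cantor function maps the null Cantor set onto $[0,1]$. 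So Tonelli in the Lagrangian variables does not transport to the Eulerian ones; the proof of Lemma~\ref{L:gfunztx}, which you invoke as a template, says this explicitly (``Since $\chi$ is not Lipschitz, we do not manage to prove at this point that $\Upsilon(B)$ has full measure'') and evades the issue by setting $\bar w:=0$ off $\Upsilon(B)$ --- an evasion unavailable here, since you need the projection itself to have full measure. The repair stays on the Eulerian side: $\phi$ is a continuous distributional solution with $w\in\rmLinf$, hence intrinsic Lipschitz by Lemma~\ref{L:distrLip}, hence $\gf$-differentiable at $\Ll^{2}$-a.e.\ $A\in\omega$ by Theorem~\ref{T:rademacher}. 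At any such $A=(y,t)$ take \emph{any} characteristic $\gamma$ through $A$ (Peano) and set $\Gamma(s)=(y+s,\gamma(y+s))$; by Lemma~\ref{L:DafLipschitz} the function $\dot\gamma=\phi\circ\Gamma$ is $\norm{w}$-Lipschitz, which forces $\df(A,\Gamma(s))=O(|s|)$, so intrinsic differentiability gives the two-sided expansion $\phi(\Gamma(s))-\phi(A)=\gf\phi(A)\,s+o(|s|)$ and thus $(y,t,\gamma,\gf\phi(A))\in\mathcal G$. This is how the paper's terse appeal to ``Rademacher theorem'' must be read; the Fubini--Tonelli transport through $\chi$ cannot be made to work as you stated it.
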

\begin{proof}
{\sf Components $(y,t,\gamma)$.}
The subset 
\begin{equation}
\label{E:C}
(y,t,\gamma,\zeta)\ni C\subset \omega\times \mathrm C^{1}(\R)\times\R
\end{equation}
identified by the constraints
\[
\gamma(s)=x
\qquad
\dot\gamma =u\circ (\Id\otimes\gamma)
\qquad
|\dot\gamma(\tau)-\dot\gamma (\sigma)|\leq \norm{w}|\tau-\sigma|
\]
is closed.
By Lemma~\ref{L:DafLipschitz}, moreover, its projection on $(y,t)$ is all $\R^{+}\times\R$.
By Rademacher theorem, we have moreover seen in the same lemma also that the projection of $\mathcal G$ on $(y,t)$ has full measure.

{\sf Component $\zeta$, discretization.}
In order to establish the existence (and the value) of the limit for the second derivative of $\gamma$ at $y$, it suffices to consider e.g.~the sequence
\[
h_{n+1}=h_{n}-h_{n}^{2},
\qquad
h_{1}=1/2.
\]
Indeed, then for $h\in(h_{n+1},h_{n}]$, for example at $y=0$
\[
\bigg|
\frac{1}{h}\int_{0}^{h}\ddot \gamma - \frac{1}{h_{n}}\int_{0}^{h_{n}}\ddot \gamma
\bigg|
=
\bigg|
\left(\frac{1}{h}-\frac{1}{h_{n}}\right)\int_{0}^{h}\ddot \gamma 
	- \frac{1}{h_{n}}\int_{h}^{h_{n}}\ddot \gamma
\bigg|
\leq
2\norm{w}\frac{h_{n}-h}{ h_{n}} .
\]
By construction however
\[
|h_{n}-h|
\leq
|h_{n}-h_{n+1}|
=
h_{n}^{2},
\]
yielding that the existence of the limit along $\{h_{n}\}_{n}$ implies the existence of the limit for any $h\downarrow0$.
Notice that this would not hold choosing a generic $\tilde h_n\downarrow 0$ instead of $\{h_n\}_n$.

{\sf Measurability of $\mathcal G$.}
The further constraint in $\gamma$ can be written as
\[
\forall k\ \exists n\ \forall \bar n\geq n:
\qquad
\left| \zeta-\frac{1}{\pm h_{\bar n}}\int_{y}^{y\pm h_{\bar n}} \ddot\gamma(s)ds\right| \leq 2^{-k}.
\]
Therefore, we are considering the following subset of $C$:
\[
\mathcal G
=
C\cap \bigcap_{k\in\N} \bigcup_{n\in\N} \bigcap_{\bar n\geq n} 
	\left\{
	\left| \zeta-\frac{\dot\gamma(y\pm h_{\bar n})-\dot\gamma(y)}{\pm h_{\bar n}}\right| \leq 2^{-k}
	\right\}.
\]
Since the set within brackets is closed, $\mathcal G$ is Borel.
\end{proof}

This allows to define an $\mathcal A$-selection $(y,t)\mapsto\gamma_{(y,t)}$, which by construction is a measurable function assigning to every point $(y,t)$ an integral curve $\gamma_{(y,t)}$ for the ODE $\dot\gamma_{(y,t)}(s)=\phi(s, \gamma_{(y,t)}(s))$, whenever there exists a Lipschitz one having $y$ as a Lebesgue point for the right, left and total second derivative.
As well, one can define the Souslin function
\[
(y,t)\mapsto \hat w(y,t):=\ddot \gamma_{(y,t)}(t)=w_{(y,t)}.
\]
The importance of this selection is due to the following theorem.

\begin{theorem}
\label{T:univselection}
Let $\phi$ be a continuous function which is uniformly Lipschitz along characteristics curves $\gamma$: $\dot\gamma(s)=\phi(s, \gamma(s))$. 
Then \underline{for every} curve $\gamma$ satisfying the ODE $\dot\gamma(s)=\phi(s, \gamma(s))$, one has
\[
\frac{d}{dt}\phi(s,\gamma(s))=\hat w (s, \gamma(s)) 
\qquad
\text{at points of differentiability of $s\mapsto\phi(s,\gamma(s))$.}
\]
\end{theorem}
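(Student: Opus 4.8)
The plan is to reduce the statement to a rigidity property of characteristics and then attack that property with the Dafermos balance identity already exploited in Lemma~\ref{L:DafLipschitz}. First I would observe that, along any characteristic, the quantity whose derivative we must compute is nothing but the velocity of the curve: since $\dot\gamma(s)=\phi(s,\gamma(s))$, at a point $s_{0}$ where $s\mapsto\phi(s,\gamma(s))$ is differentiable we have $\frac{d}{ds}\phi(s,\gamma(s))\big|_{s_{0}}=\ddot\gamma(s_{0})$, the classical second derivative of $\gamma$. On the other hand, by construction $\hat w(s_{0},\gamma(s_{0}))$ is the (left, right and total) second derivative at $s_{0}$ of the particular characteristic $\gamma_{(s_{0},\gamma(s_{0}))}$ selected from $\mathcal G$ through the same point. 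Hence Theorem~\ref{T:univselection} is equivalent to the \emph{well-definedness} of $\hat w$: two characteristics issued tangentially from a common point of $\omega$, each possessing a classical second derivative there, must share that value. Note that $\gamma$ itself is admissible in $\mathcal G$ at $s_{0}$, so the selection could have returned either curve; the content is precisely that the $\zeta$-fibre of $\mathcal G$ over each point is a singleton.

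To prove this comparison I would argue by contradiction. Fix the common point, normalised to $(0,0)$, and suppose $\gamma_{1},\gamma_{2}$ are characteristics with $\gamma_{1}(0)=\gamma_{2}(0)=0$, $\dot\gamma_{1}(0)=\dot\gamma_{2}(0)=\phi(0,0)$ and $\ddot\gamma_{1}(0)>\ddot\gamma_{2}(0)$. By Taylor expansion the separation $h:=\gamma_{1}-\gamma_{2}$ then satisfies $h(s)>0$ for $0<|s|<\eta$ while $h(0)=h'(0)=0$, so the two curves are first-order tangent at the origin and bound a thin lens on each side of $s=0$. I would integrate the balance law over this lens, $R=\{0\le s\le\sigma,\ \gamma_{2}(s)\le t\le\gamma_{1}(s)\}$: because both lateral boundaries move with the respective characteristic speed $\dot\gamma_{i}=\phi(\cdot,\gamma_{i})$, the flux computation that produced the sign-definite term in~\eqref{daf0623} now telescopes, and at $s=0$ the boundary term vanishes by tangency. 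This yields a clean identity relating $\int_{\gamma_{2}(\sigma)}^{\gamma_{1}(\sigma)}\phi(\sigma,\cdot)$ to $\tfrac12\int_{0}^{\sigma}(\phi(s,\gamma_{1})^{2}-\phi(s,\gamma_{2})^{2})\,ds$ and to $\iint_{R}w$, which, together with the $1/2$-Hölder continuity of $\phi$ from Corollary~\ref{holder} and the bound $\norm{w}$ on the source, is meant to control the growth of $h$.

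The hard part will be precisely here. A direct expansion shows that the leading $\sigma^{2}$ contributions of the lens identity cancel identically, so the bare balance law does not detect the difference $\ddot\gamma_{1}(0)-\ddot\gamma_{2}(0)$ at first order, while the only regularity available in the transversal direction is $1/2$-Hölder, too weak to pin a second-order coefficient by a naive estimate. I therefore expect the resolution to require the finer, measure-theoretic mechanism borrowed from~\cite{abc}: instead of comparing two fixed curves by Taylor expansion, one shows that the set of points carrying two characteristics with distinct second derivatives meets each characteristic in at most countably many transversal crossings — exactly the multivalued-point analysis appearing inside the proof of Lemma~\ref{L:gfunztx} — and combines this with the Souslin/Borel selection so that the value $\hat w$ is forced to agree with $\ddot\gamma$ at every genuine point of differentiability. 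Once this well-definedness is secured, the conclusion follows at once from the identification $\frac{d}{ds}\phi(s,\gamma(s))=\ddot\gamma(s)=\hat w(s,\gamma(s))$ established in the first step.
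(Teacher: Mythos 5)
Your opening reduction is where the proposal breaks down: Theorem~\ref{T:univselection} is \emph{not} equivalent to the pointwise rigidity statement that two characteristics issued tangentially from a common point, each possessing a classical second derivative there, must share that value. That statement is false under the hypotheses of the theorem. Consider
\[
\phi(s,t)=
\begin{cases}
s & t\geq s^{2}/2,\\
2t/s & |t|< s^{2}/2,\ s\neq 0,\\
-s & t\leq -s^{2}/2,
\end{cases}
\]
which is continuous on $\R^{2}$ (in the middle region $|2t/s|\leq |s|$). Its characteristics are exactly the parabolas $t=\pm(s^{2}/2+c)$ with $c\geq 0$, the curves $t=ks^{2}$ with $|k|\leq 1/2$, and their concatenations at the origin; along each of them $\phi$ is Lipschitz with constant $1$, so $\phi$ is uniformly Lipschitz along characteristics (it is even a continuous distributional solution with source $w=1$, $w=-1$, $w=2t/s^{2}\in[-1,1]$ in the three regions). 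The two curves $\gamma_{1}(s)=s^{2}/2$ and $\gamma_{2}(s)=-s^{2}/2$ both solve the ODE, both pass through the origin with velocity $\phi(0,0)=0$, and both have classical second derivatives there, equal to $+1$ and $-1$ respectively. Hence the $\zeta$-fibre of $\mathcal G$ over $(0,0)$ is not a singleton, and whatever value the selection assigns to $\hat w(0,0)$, the identity $\frac{d}{ds}\phi(s,\gamma(s))=\hat w(s,\gamma(s))$ fails at $s=0$ for at least one of $\gamma_{1},\gamma_{2}$. So no argument can secure the ``well-definedness'' you aim at; what can be proved (and what the paper's proof actually establishes) is that along each \emph{fixed} characteristic $\bar\gamma$ the set of points where $\ddot{\bar\gamma}$ exists but differs from $\hat w(\cdot,\bar\gamma(\cdot))$ is at most countable, hence the identity holds almost everywhere along the curve.

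This also explains why your two fallback mechanisms cannot close the argument. The Dafermos-type lens integration you propose fails for the reason you yourself identify (the leading-order terms cancel, and $1/2$-H\"older transversal regularity cannot detect a second-order coefficient); and the countable-crossing analysis of Lemma~\ref{L:gfunztx}, even if transplanted, yields countability of the exceptional set, never emptiness, so your final claim of agreement ``at every genuine point of differentiability'' overreaches in exactly the way the example above forbids. The missing idea is the paper's genuinely \emph{two-point} argument: decompose the bad set along $\bar\gamma$ into countably many pieces indexed by $(\varepsilon,n)$, each consisting of Lebesgue points $y$ of $\ddot{\bar\gamma}$ at which the averaged deviation between $\hat w(y,\bar\gamma(y))$ and $\ddot{\bar\gamma}$ persists with a definite sign at all scales $\sigma<2^{-n}$, while the selected curve reproduces $\hat w(y,\bar\gamma(y))$ within $\varepsilon$ at those scales; then show, as in the analysis of the set~\eqref{E:accessorio}, that such a piece cannot contain two points $y_{1},y_{2}$ with $|y_{1}-y_{2}|\leq 2^{-n}$. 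Indeed the sign condition places both selected curves on the same side of $\bar\gamma$, the intermediate value theorem forces them to cross at some $\sigma'\in[y_{1},y_{2}]$, at the crossing they share the velocity $\phi$, and integrating the second derivatives then contradicts $\dot{\bar\gamma}(y_{2})-\dot{\bar\gamma}(y_{1})=\int_{y_{1}}^{y_{2}}\ddot{\bar\gamma}$ by a margin $\varepsilon(y_{2}-y_{1})$. A single bad point on a curve is perfectly possible --- that is precisely the counterexample --- but two nearby ones are not; your proposal, organized around a one-point rigidity that fails, never reaches this step.
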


\begin{corollary}
\label{C:distrbroad}
A continuous distributional solution $\phi$ to $\gf\f=w $ is also a broad solution.
\end{corollary}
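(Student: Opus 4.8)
The plan is to read the corollary off Theorem~\ref{T:univselection}, which already furnishes the reduction along the nonlinear characteristics, and to supply the bookkeeping needed to meet the three requirements (B.1)--(B.3) of Definition~\ref{D:broadsol}. Throughout, $\f$ is a continuous distributional solution of $\gf\f=w$. By Lemma~\ref{L:DafLipschitz} it is $\|w_n\|_{\rmLinf}$-Lipschitz along every integral curve $\gamma$ of $\Wf$ (those with $\dot\gamma=\f\circ(\Id\otimes\gamma)$), and by Lemma~\ref{L:DafLipschitzbis} it is $\|w_j\|_{\rmLinf}$-Lipschitz along every integral curve of the linear fields $X_{j+1}$, $j\neq n$. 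In particular $\f$ is uniformly Lipschitz along characteristics, so the hypotheses of Theorem~\ref{T:univselection} are met. (For $n=1$ there is only the nonlinear field and the corollary is immediate; the paragraphs below address the general $n$.)

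First I would assemble the universal source $\hat w=(\hat w_1,\dots,\hat w_{2n-1})\in\mfrl(\om;\R^{2n-1})$. For the nonlinear component I set $\hat w_n$ to be the universally measurable (indeed Borel) function produced by Theorem~\ref{T:univselection}; it satisfies $|\hat w_n|\le\|w_n\|_{\rmLinf}$, being a Lebesgue-point value of $\ddot\gamma$, and along every $\Wf$-characteristic $\Gamma_n$ one has $\tfrac{d}{ds}\f(\Gamma_n(s))=\ddot\Gamma_n(s)=\hat w_n(\Gamma_n(s))$ at every point of differentiability, hence for a.e.~$s$ by the Dafermos bound. For the linear components $i\neq n$ the situation is simpler, because the integral curves of $X_{i+1}$ are the explicit affine lines displayed after Definition~\ref{D:Lagrparam}: the associated flow is a nonsingular shear and its characteristics never focus, so running the same Souslin/Von~Neumann selection on the analogue of the set $\mathcal G$ (or, equivalently, a Fubini argument on this foliation) yields a universally measurable $\hat w_i$, bounded by $\|w_i\|_{\rmLinf}$, for which the reduction holds along every such line. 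Collecting the components gives (B.2).

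Next I would check (B.1) and (B.3). Since $\f$ is a distributional solution, Theorem~\ref{T:firsttheorem} gives $\f\in\mathrm{Lip}_{\W,\loc}(\om)$, whence by Rademacher's Theorem~\ref{T:rademacher} $\f$ is $\gf$-differentiable at $\mathcal L^{2n}$-a.e.~$A\in\om$. At any point where $\f$ admits the $\gf_i$-derivative, Definition~\ref{D:gfderivative} forces every integral curve of $\gf_i$ through $A$ to produce the same derivative $\gf_i\f(A)$; the particular curve used to evaluate $\hat w_i(A)$ is one of these, so $\hat w_i(A)=\gf_i\f(A)$, which is precisely (B.3). Combining with Theorem~\ref{T:tesipinamonti}, which gives $\gf\f(A)=w(A)$ for a.e.~$A$, we get $\hat w=\gf\f=w$ $\mathcal L^{2n}$-a.e., that is (B.1). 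Thus $\f$ is a broad solution.

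The genuine weight of the argument is entirely carried by Theorem~\ref{T:univselection}, so that is where I expect the main obstacle to lie: producing a single value $\hat w(A)$ that is simultaneously correct along all characteristics through $A$, for a.e.~parameter on each of them. The difficulty is twofold. On the descriptive-set-theoretic side one must extract a universally measurable (Borel) selection of characteristics and of their second derivatives from the set $\mathcal G$, exactly as in the construction preceding the theorem. More essentially, one must control the focusing set where several characteristics with distinct second derivatives meet at one point: as in the multivalued-points analysis of the proof of Lemma~\ref{L:gfunztx}, a Taylor expansion (the first derivatives coincide, so the curves separate like $\tfrac12(\ddot\gamma_1-\ddot\gamma_2)(s-\bar s)^2$) together with a non-crossing argument shows that along any fixed characteristic this focusing set is at most countable, hence $\mathcal L^1$-negligible in the parameter. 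This is what makes $\hat w$ agree with $\ddot\gamma$ for a.e.~$s$ on every curve; Examples~\ref{Ex:1} and~\ref{Ex:2} confirm that such an exceptional set is truly present and that the correct representative there is not detected by the distributional source.
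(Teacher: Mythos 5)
Your proposal is correct and takes essentially the same route as the paper: the corollary is meant to be read directly off Theorem~\ref{T:univselection}, whose Lipschitz-along-characteristics hypothesis is supplied by Lemmas~\ref{L:DafLipschitz} and~\ref{L:DafLipschitzbis}, with the identifications (B.1) and (B.3) following from Rademacher's Theorem~\ref{T:rademacher} and the a.e.\ identification of the intrinsic gradient (Theorem~\ref{T:tesipinamonti}), exactly as the paper indicates in its remark on the identification of the source terms. The paper states the corollary without further argument, and the bookkeeping you add (the explicit treatment of the linear fields and the verification of (B.1)/(B.3)) fills in precisely the details the paper leaves implicit.
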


\subsubsection{Borel selection}
\label{Sss:borelSel}
Before proving the theorem, for the sake of completeness we show that one can define as well a Borel fucntion, that we still denote as $\hat w(y,t)=w_{y,t}$, for which Theorem~\ref{T:univselection} still holds. This requires a bit more work than the previous argument, and it is conceptually a little more involved: we do not associate immediately to each point (where it is possible) an eligible curve and its second derivative, but something which must be close to it.
We will find then with the proof of Theorem~\ref{T:univselection} that we end up basically with the same selection.

\begin{lemma}
\label{L:BorelApprg}
For every $\varepsilon>0$, there is a Borel function defined on the $(y,t)$-projection $\mathcal D$ of $\mathcal G$
\[
(y,t)\mapsto (\gamma_{\varepsilon,y,t},w_{\varepsilon,y,t})
\]
such that $(y,t,\gamma_{\varepsilon,y,t},w_{\varepsilon,y,t}) \in C$ of~\eqref{E:C} and that for $|h|$ sufficiently small
\[
\left| w_{\varepsilon,y,t} - \frac{1}{h} \int_{y}^{y+h}\ddot \gamma_{\varepsilon,y,t}\right|<\varepsilon .
\]
\end{lemma}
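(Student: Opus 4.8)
The plan is to deduce the Borel selection from a uniformization theorem for Borel sets with $\sigma$-compact sections, the point being to replace the existential quantifier ``for $|h|$ sufficiently small'' by a countable union of \emph{closed} conditions. Recall that, $\dot\gamma$ being absolutely continuous, $\frac1h\int_{y}^{y+h}\ddot\gamma=\frac{\dot\gamma(y+h)-\dot\gamma(y)}{h}$, a quantity that depends continuously on $(y,\gamma)$ for each fixed $h\neq0$.

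First, I would fix $\varepsilon>0$ and, for every $n\in\N$, introduce the set
\[
\mathcal G^{n}_{\varepsilon}:=\Big\{(y,t,\gamma,w)\in C:\ |w|\le\norm{w},\ \Big|w-\tfrac{\dot\gamma(y+h)-\dot\gamma(y)}{h}\Big|\le\tfrac{\varepsilon}{2}\ \ \forall\,0<|h|\le\tfrac1n\Big\},
\]
where $C$ is the closed set of~\eqref{E:C}. For each admissible $h$ the inner inequality is a closed condition in $(y,\gamma,w)$, so, intersected with the closed set $C$, the set $\mathcal G^{n}_{\varepsilon}$ is closed. Its sections over $(y,t)$ are moreover compact: by Lemma~\ref{L:DafLipschitz} and the boundedness of $\phi$ on $\omega$ the admissible curves $\gamma$ pass through $(y,t)$ with $\dot\gamma$ bounded and $\norm{w}$-Lipschitz, hence form an equibounded, equi-Lipschitz---thus, by Arzel\`a--Ascoli, relatively compact---family in $\mathrm C^{1}$, while $w$ ranges in the compact interval $[-\norm{w},\norm{w}]$; closedness then upgrades relative compactness to compactness.

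Next I would set $\mathcal G_{\varepsilon}:=\bigcup_{n\in\N}\mathcal G^{n}_{\varepsilon}$. Since the constraint weakens as $n$ grows, this is an increasing countable union of closed sets, hence $F_{\sigma}$ and in particular Borel, and each of its $(y,t)$-sections is a countable union of the compact sections above, hence $\sigma$-compact. I would then check that the $(y,t)$-projection of $\mathcal G_{\varepsilon}$ contains $\mathcal D$: for $(y,t)\in\mathcal D$ there is $(\gamma,\zeta)\in\mathcal G$, so $\frac1h\int_{y}^{y+h}\ddot\gamma\to\zeta$ and $|\zeta|\le\norm{w}$ (as $|\ddot\gamma|\le\norm{w}$ a.e.); choosing $n$ with $1/n$ below the resulting threshold places $(y,t,\gamma,\zeta)$ in $\mathcal G^{n}_{\varepsilon}$. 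Being Borel with $\sigma$-compact sections, $\mathcal G_{\varepsilon}$ admits, by the Arsenin--Kunugui uniformization theorem (see~\cite{Sri}), a Borel map $(y,t)\mapsto(\gamma_{\varepsilon,y,t},w_{\varepsilon,y,t})$ selecting a point of each section; its restriction to $\mathcal D$ is the sought function. Indeed $(y,t,\gamma_{\varepsilon,y,t},w_{\varepsilon,y,t})\in C$ by construction, and since the selected point lies in some $\mathcal G^{n}_{\varepsilon}$ one has $\big|w_{\varepsilon,y,t}-\tfrac1h\int_{y}^{y+h}\ddot\gamma_{\varepsilon,y,t}\big|\le\varepsilon/2<\varepsilon$ for all $0<|h|\le1/n$, i.e.~for $|h|$ small enough.

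The hard part is precisely the measurability. Treated head-on, the clause ``for $|h|$ sufficiently small'' is an existential over a continuum and would leave the relevant graph merely analytic, yielding only a universally measurable selection---exactly the Souslin selection already produced in Section~\ref{Sss:souslSel}. The whole gain here comes from exposing the $\sigma$-compact structure through the layering over $n$, which is what makes the genuinely Borel uniformization applicable; the supporting technical point to handle with care is the compactness of the sections of each $\mathcal G^{n}_{\varepsilon}$, resting on the a priori Lipschitz bound of Lemma~\ref{L:DafLipschitz}.
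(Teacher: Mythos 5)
Your proposal is correct and takes essentially the same route as the paper: both encode the approximate difference-quotient condition as a countable union over $n$ of closed subsets of $C$, observe that the $(y,t)$-sections are $\sigma$-compact (equi-Lipschitz curves plus a bounded $w$-slot, via Arzel\`a--Ascoli), and invoke the Arsenin--Kunugui uniformization theorem (Th.~5.12.1 of~\cite{Sri}) to get a Borel selection on a Borel projection containing $\mathcal D$. The only difference is technical: the paper imposes the closed constraint along the discrete sequence $\{h_m\}$ (writing the set as $\bigcup_n\bigcap_{m>n}\{\cdot\}$, which then needs the spacing property of that sequence to recover the estimate for all small $h$), whereas you impose it for all $0<|h|\le 1/n$ with margin $\varepsilon/2$, which delivers the stated strict inequality directly.
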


\begin{definition}
We define the Borel representative of $w$ as the function
\[
\hat w(y,t)=w_{y,t}=\chi_{\{(y,t)\in \mathcal D\}}\liminf_{\varepsilon\downarrow 0} w_{\varepsilon,y,t} .
\]
\end{definition}

\begin{proof}[Proof of Lemma~\ref{L:BorelApprg}]
We apply Arsenin-Kunugui selection theorem (\cite{Kun}, or Th.~5.12.1 of~\cite{Sri}) to the set
\begin{equation}
\label{E:accessorio2}
\bigcup_{n\in\N} \bigcap_{m>n}
\left\{
(y,t,\gamma_{},w) \in C:
\ \left| w - \frac{\dot \gamma(y+h_{m})-\dot\gamma(y)}{h_{m}}\right|\leq \varepsilon
\right\},
\end{equation}
where $C$ was defined in~\eqref{E:C} and $\{h_{n}\}_{n\in\N}$ immediately below that, in the same proof.
More precisely, $C$ is closed and
\[
\left\{
(\gamma,\zeta):\ 
\left| w - \frac{\dot \gamma(y+h)-\dot\gamma(y)}{h}\right|\leq\varepsilon .
\right\}
\]
is also closed: therefore each $(y,t)$-section of~\eqref{E:accessorio2} is $\sigma$-compact.
Then the hypothesis of the theorem are satisfied: it assures that the projection of~\eqref{E:accessorio2} on the first factor $\omega\ni(y,t)$ is Borel and there exists a Borel section of~\eqref{E:accessorio2} defied on it, which is the function in our statement.

Notice that the domain fo this function, containing $\mathcal{D}$, has full Lebesgue measure.
\end{proof}

\subsubsection{Proof of Theorem~\ref{T:univselection}}
\label{Sss:proofSel}
We provide now the proof of Theorem~\ref{T:univselection} with $w_{y,t}$ either the Borel or the Souslin one: we consider any characteristic $\bar \gamma$ for the balance law and we prove that for almost every $y$ its second derivative is precisely $w_{y,\bar\gamma(y)}$. We remind that $\dot{\bar\gamma}(y)$ is Lipschitz (Lemma~\ref{L:DafLipschitz}).

{\sf Step 1, countable decomposition.} The set of $y$ Lebesgue point of $\ddot{\bar\gamma}(y)$ with value different from $w_{t,\bar\gamma(y)}$ can be reduced to
\begin{align*}
&\bigcup_{\varepsilon\downarrow 0}
\left\{
s:\ 
\left| w_{y,\bar\gamma(y)} - \ddot {\bar\gamma}(y)\right|\geq\varepsilon
\right\}
\\
&\quad\subset
\bigcup_{\varepsilon\downarrow 0}
\bigcup_{n\in\N}
\left\{
y:\ \forall\sigma<2^{-n} \quad
\left|w_{y,\bar\gamma(y)} - \frac{1}{\sigma} \int_{y}^{y+ \sigma}\ddot {\bar\gamma}\right|\geq \varepsilon
, \quad 
\left|w_{y,\bar\gamma(y)} -
\frac{1}{\sigma} \int_{y-\sigma}^{y}\ddot {\bar\gamma} \right|\geq \varepsilon
\right\} 
\\
&\quad=
\bigcup_{\tilde \varepsilon<\varepsilon\downarrow 0}
\bigcup_{n\in\N}
\left\{
y:\ \forall \sigma <2^{-n} \quad 
\left| w_{y,\bar\gamma(y)} - 
\frac{1}{\pm \sigma} \int_{y}^{y\pm \sigma}\ddot {\bar\gamma}\right|\geq 3\varepsilon
, \quad
	\left| w_{t, \bar\gamma(s)} 
		- \frac{1}{\pm \sigma} \int_{y}^{y\pm \sigma}\ddot \gamma_{\tilde\varepsilon,t, \bar\gamma(s)}\right|
		<\varepsilon
\right\} 
.
\end{align*}
If one is considering the Souslin selection, clearly there is the simplification
$\gamma_{\tilde\varepsilon,y,t}=\gamma_{y,t}$.

{\sf Step 2, reduction argument.}
We prove that the set
\begin{equation}
\label{E:accessorio}
\left\{
y:\ \forall\sigma<2^{-n} \quad 
w_{y,\bar\gamma(y)} > 
\frac{1}{\pm \sigma} \int_{y}^{y\pm \sigma}\ddot {\bar\gamma}+3\varepsilon
, \quad
	\left| w_{y, \bar\gamma(y)} 
		- \frac{1}{\sigma} \int_{y}^{y+ \sigma}\ddot \gamma_{\tilde\varepsilon,y, \bar\gamma(y)}\right|
		<\varepsilon
\right\} 
\end{equation}
cannot contain points $y_{1}, y_{2}$ with $|y_{1}-y_{2}|\leq 2^{-n}$.
Then the thesis will follow: by the previous step the set of $y$ where the second derivative of $\bar\gamma(y)$ exists and it is different from $w_{t,\bar\gamma(y)}$ will be countable. Therefore the second derivative of $\bar\gamma(y)$ will be almost everywhere precisely $w_{t,\bar\gamma(y)} $.

{\sf Step 3: Analysis of the single sets.}
By contradiction, assume that~\eqref{E:accessorio} contains two such points, for example $y_{1}=0$, $y_{2}=\sigma$.
By definition of the set of points we are considering, the two selected curves through $(0,\bar \gamma(0))$, $(\sigma,\bar\gamma(\sigma))$,
\[
\gamma_{0}:=\gamma_{\tilde\varepsilon,0, \bar\gamma(0)},
\qquad
\gamma_{\sigma}:=\gamma_{\tilde\varepsilon, \sigma, \bar\gamma(\sigma)}
\]
must intersect in the time interval $[0, \sigma]$, say at time $\sigma'$.
Since they satisfy the ODE for characteristics, where they intersect they have the same derivative. Being all of them Lipschitz, we have then
\[
\dot{\bar\gamma}(0) + \int_{0}^{\sigma'}\ddot \gamma_{0}
= \dot\gamma_{\sigma}(\sigma')
=
\dot{\bar\gamma}(\sigma) - \int_{\sigma'}^{\sigma}\ddot \gamma_{\sigma}.
\]
Comparing the LHS and the RHS, one arrives to
\[
\dot{\bar\gamma}(\sigma) -\dot{\bar\gamma}(0) 
=
\int_{0}^{\sigma'}\ddot \gamma_{0} + \int_{\sigma'}^{\sigma}\ddot \gamma_{\sigma}
.
\]
However, since the times $0, \sigma $ belong by construction to the set~\eqref{E:accessorio} one has
\[
\int_{0}^{\sigma'}\ddot \gamma_{0} + \int_{\sigma'}^{\sigma}\ddot \gamma_{\sigma}
>
w_{0,\bar\gamma(0)}\sigma' + w_{\sigma,\bar\gamma(\sigma)} (\sigma-\sigma') - 2\varepsilon
>
\int_{0}^{\sigma} \ddot{\bar\gamma} + \varepsilon  
= \dot{\bar\gamma}(\sigma) -\dot{\bar\gamma}(0) +\varepsilon
\]
reaching a contradiction.
$\hfill\qed$

\subsection{Lagrangian solutions are distributional solutions}
\label{Ss:lagrdistr}
In this section we prove, without passing through the implicit function theorem, that if a continuous function $\phi$ satisfies the Lagrangian formulation~\ref{D:Lagrsol} of the balance law~\eqref{E:balance}, then in the Eulerian variables $\phi$ solves indeed the balance law in distributional sense. 
We instead rely on a mollification procedure in the Legrangian variables.
See also \cite{pinamonti}, where a different, pointwise approximation of the distributional solution is provided, starting from a broad* solution.
This basically shows the converse of Dafermos' statement in~\cite{Dafermos}.

We notice that the fact that if a continuous $u$ solves the Lagrangian formulation, then it is not difficult as we see in the next theorem proving that it solves a balance law in distributional sense. The identification of the Lagrangian and distributional sources however is not trivial: we indeed use the converse implication of Section~\ref{S:distrbroad} and Rademacher theorem.

We already motivated why focusing on the case $n=1$, whereas the equation reduces to
\[
\tag{\ref{E:balance}}
\phi_{y}(y,t)+ \left[\frac{\phi^{2}(y,t)}{2}\right]_{t}=w(y,t).
\]
We give generalizations to the case $n\geq2$ in Remark \ref{R:remarkdimnsection6}

\begin{theorem}
\label{T:converseDafermos}
Every Lagrangian solution to~\eqref{E:balance} is also a distributional solution, with source term given by the distribution identified by the function $\bar w$ in the Lagrangian formulation. 
\end{theorem}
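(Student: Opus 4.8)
The plan is to split the statement into two parts: first, a soft argument via mollification showing that a Lagrangian solution $\f$ is a distributional solution of~\eqref{E:balance} for \emph{some} bounded source $\tilde w\in\rmLinf(\omega)$; and second, the identification $\tilde w=\bar w$ $\mathcal L^2$-a.e., which is the delicate point and which I would obtain not by tracking the mollification but \emph{a posteriori} through Rademacher's theorem. Throughout I work with $n=1$ and, localizing and invoking Lemma~\ref{L:globchi}, I may assume the given full Lagrangian parameterisation $(\tilde\omega,\chi)$ is defined on a box with $\chi$ continuous up to the closure. Write $\Upsilon(s,\tau)=(s,\chi(s,\tau))$ and $u:=\f\circ\Upsilon$. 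By (L.3) and by (LS1) of Definition~\ref{D:Lagrsol} one has $\partial_s\chi=u$ and $\partial_s u=\bar w\circ\Upsilon$ with $\bar w\in\mfrl(\omega)$; in particular $u$ is Lipschitz in $s$, and $\tau\mapsto\chi(s,\tau)$ is continuous, nondecreasing and onto $\omega_s$, so that the nonnegative Jacobian measure $\partial_\tau\chi$ satisfies the area formula $\int_{\tilde\omega}H(\Upsilon)\,d(\partial_\tau\chi)\,ds=\int_\omega H\,dy\,dt$ for every continuous $H$.

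First I would mollify $\chi$ in the Lagrangian variable $\tau$ alone, obtaining $\chi^\delta$ which is $C^1$ jointly, still nondecreasing in $\tau$, with $\partial_s\chi^\delta=u^\delta$ and $\partial_s u^\delta=\bar w^\delta$ (the $\tau$-mollifications), and $\partial_{s\tau}\chi^\delta=\partial_\tau u^\delta$. Fix $\varphi\in\mathrm C^\infty_c(\omega)$; since $\chi$ is onto and $\varphi$ is compactly supported, $\varphi\circ\Upsilon^\delta$ has compact support in $\tilde\omega$ for small $\delta$. Using $\varphi_y(\Upsilon^\delta)=\partial_s(\varphi\circ\Upsilon^\delta)-u^\delta\,\varphi_t(\Upsilon^\delta)$ and $\varphi_t(\Upsilon^\delta)\,\partial_\tau\chi^\delta=\partial_\tau(\varphi\circ\Upsilon^\delta)$, I would integrate by parts once in $s$ and once in $\tau$ (all boundary terms vanishing) to reach the exact identity
\[
\int_{\tilde\omega}\Big[u^\delta\varphi_y(\Upsilon^\delta)+\tfrac12 (u^\delta)^2\varphi_t(\Upsilon^\delta)\Big]\partial_\tau\chi^\delta\,ds\,d\tau=-\int_{\tilde\omega}\bar w^\delta\,(\varphi\circ\Upsilon^\delta)\,\partial_\tau\chi^\delta\,ds\,d\tau,
\]
the two cross terms $u^\delta\,\partial_\tau u^\delta\,(\varphi\circ\Upsilon^\delta)$ cancelling after the integrations by parts.

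Next I would let $\delta\downarrow0$. On the left all the continuous factors converge uniformly ($u^\delta\to u$, $\chi^\delta\to\chi$, hence $\varphi_y(\Upsilon^\delta)\to\varphi_y(\Upsilon)$ and so on), while $\partial_\tau\chi^\delta\,ds\,d\tau\rightharpoonup\partial_\tau\chi\,ds$ weakly-$*$; since only this single factor converges weakly, the left-hand side passes to the limit and, by the area formula applied to the continuous integrand $H=\f\varphi_y+\tfrac12\f^2\varphi_t$, equals $\int_\omega(\f\varphi_y+\tfrac12\f^2\varphi_t)\,dy\,dt$. The right-hand side is bounded in modulus by $\|\bar w\|_\infty\big(\int_\omega|\varphi|\,dy\,dt+o(1)\big)$, so the distribution $\f_y+[\f^2/2]_t$ extends to a bounded functional on $\mathrm L^1(\omega)$ and is therefore represented by some $\tilde w\in\rmLinf(\omega)$. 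Thus $\f$ is a distributional solution of~\eqref{E:balance} with an $\rmLinf$ source. I expect this limit to be the main obstacle: one cannot identify the limit of the right-hand side directly with $\int_\omega\bar w\,\varphi$, because there it is a product of two factors, $\bar w^\delta$ and $\partial_\tau\chi^\delta$, that converge only weakly, and such products are generically ill-behaved (cf.\ the collapsing and splitting phenomena of Examples~\ref{Ex:1},~\ref{Ex:2}); this is precisely why I only extract boundedness at this stage.

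Finally I would identify $\tilde w$ with $\bar w$ through pointwise, rather than distributional, information. By Lemma~\ref{L:distrLip} the distributional solution $\f$ is intrinsic Lipschitz, so by Rademacher's Theorem~\ref{T:rademacher} it is $\gf$-differentiable at $\mathcal L^2$-a.e.\ $A\in\omega$; at every such point $\f$ admits the $\gf$-derivative in the sense of Definition~\ref{D:gfderivative}. On the one hand, by Corollary~\ref{C:distrbroad} $\f$ is a broad solution with source $\hat w$ satisfying $\hat w=\tilde w$ a.e.\ and $\hat w(A)=\gf\f(A)$ at differentiability points (B.3); on the other hand the given Lagrangian source satisfies $\bar w(A)=\gf\f(A)$ at the same points by (LS2) of Definition~\ref{D:Lagrsol}. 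Hence $\tilde w=\gf\f=\bar w$ $\mathcal L^2$-a.e., i.e.\ the distributional source of~\eqref{E:balance} is exactly the distribution identified by $\bar w$, as claimed. The generalization to $n\ge2$ only involves the linear fields $X_j$, for which the reduction along characteristics is straightforward, and is handled as in Remark~\ref{R:remarkdimnsection6}.
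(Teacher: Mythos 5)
Your proposal is correct, and its skeleton is the same as the paper's proof of Theorem~\ref{T:converseDafermos}: mollify the (full, hence continuous) Lagrangian parameterisation in the $\tau$-variable only, use monotonicity and the uniform bound on the source to obtain in the limit that $\f$ solves~\eqref{E:balance} distributionally with \emph{some} $\rmLinf$ source, and identify that source with $\bar w$ only a posteriori, via the distributional-to-broad implication of Section~\ref{S:distrbroad} combined with Lemma~\ref{L:distrLip}, Rademacher's Theorem~\ref{T:rademacher} and the pointwise conditions (B.3), (LS2); your final paragraph is essentially the paper's closing argument made more explicit. Where you genuinely differ is the implementation of the limit passage. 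The paper pushes the mollified parameterisation forward to the Eulerian side: it defines $\phi^{\varepsilon}(s,t)$ by $\phi^{\varepsilon}(s,\chi^{\varepsilon}(s,\tau))=\partial_{s}\chi^{\varepsilon}(s,\tau)$ --- which needs the observation that $\phi\circ\Upsilon$ is constant on the intervals where $\chi^{\varepsilon}$ fails injectivity --- proves $\mathrm L^{1}$-convergence $\phi^{\varepsilon}\to\phi$, defines $w^{\varepsilon}$ through the approximate equation~\eqref{E:PDEapprox}, and passes to the limit using weak-$*$ compactness of the uniformly bounded $w^{\varepsilon}$. You instead never leave the Lagrangian variables: your exact integral identity, obtained by integrating by parts in $(s,\tau)$ with the cancellation of the cross terms $u^{\delta}\,\partial_{\tau}u^{\delta}\,(\varphi\circ\Upsilon^{\delta})$ (which checks out), together with the area formula for the pushforward of the Stieltjes measure $\partial_{\tau}\chi$, replaces the construction of $\phi^{\varepsilon}$ and $w^{\varepsilon}$ entirely. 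This buys a cleaner argument: you avoid the well-definedness discussion for $\phi^{\varepsilon}$, you avoid the point --- treated rather quickly in the paper --- that the a.e.-pointwise identity~\eqref{E:PDEapprox} is actually valid in the distributional sense even though $\phi^{\varepsilon}$ is smooth only off an exceptional set, and you need no subsequence extraction, since duality with $\mathrm L^{1}$ produces the $\rmLinf$ source $\tilde w$ directly. The cost structure is the same in both proofs: neither identifies the limit of the source term directly (you say so explicitly; the paper's $\bar{\bar w}$ has exactly the same status as your $\tilde w$), and both rely at the end on the fact that $\gf$-differentiability at a point yields the $\gf$-derivative of Definition~\ref{D:gfderivative} there, so that (B.3) and (LS2) can be invoked at $\mathcal L^{2}$-a.e.\ point.
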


\begin{corollary}\label{C:broaddistrib}
Any continuous broad solution $\phi$ to~\eqref{E:balance} is also a distributional solution.
\end{corollary}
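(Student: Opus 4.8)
The plan is to pass to the Lagrangian variables supplied by the parameterisation, where the balance law becomes essentially linear, carry out the computation there, and then push the resulting identity back to the Eulerian variables; the obstruction caused by the non-injectivity and low regularity of the change of variables will be absorbed by a mollification in the $\tau$-variable. So fix a Lagrangian parameterisation $(\tilde\om,\chi)$ and the universally measurable source $\bar w_{\chi}$ provided by Definition~\ref{D:Lagrsol}, and write $\Upsilon(s,\tau)=(s,\chi(s,\tau))$, $u(s,\tau):=\phi(\Upsilon(s,\tau))$. By (L.3) one has $\partial_{s}\chi=u$; by (LS1) together with Lemma~\ref{L:gfunztx} one has $\partial_{s}u=\partial_{ss}\chi=\bar w_{\chi}\circ\Upsilon$ for a.e.~$s$ and every $\tau$; by (L.2) the map $\tau\mapsto\chi(s,\tau)$ is nondecreasing, and it is onto $\om_{s}$ because the parameterisation is full. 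The goal is to verify, for every $\varphi\in\mathrm C^{\infty}_{c}(\om)$, the distributional identity for \eqref{E:balance} with source $\bar w_{\chi}$.

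The formal computation the proof must justify is the following. Pull back the test function by $\psi:=\varphi\circ\Upsilon$, so that, where the chain rule applies, $\partial_{s}\psi=(\partial_{y}\varphi+u\,\partial_{t}\varphi)\circ\Upsilon$ and $\partial_{\tau}\psi=(\partial_{t}\varphi)\circ\Upsilon\cdot\partial_{\tau}\chi$. Changing variables through $\Upsilon$ (Jacobian $\partial_{\tau}\chi$) and using the algebraic identity $\phi\,\partial_{y}\varphi+\tfrac12\phi^{2}\partial_{t}\varphi=\big[u(\partial_{y}\varphi+u\partial_{t}\varphi)-\tfrac12 u^{2}\partial_{t}\varphi\big]\circ\Upsilon$ rewrites the Eulerian integral as $\int_{\tilde\om}\big(u\,\partial_{\tau}\chi\,\partial_{s}\psi-\tfrac12 u^{2}\,\partial_{\tau}\psi\big)\,ds\,d\tau$. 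Integrating the first summand by parts in $s$, using $\partial_{s}\chi=u$ and $\partial_{s}\partial_{\tau}\chi=\partial_{\tau}u$, gives $-\int(\bar w_{\chi}\circ\Upsilon\,\partial_{\tau}\chi+u\,\partial_{\tau}u)\psi$; integrating the second by parts in $\tau$ gives $+\int u\,\partial_{\tau}u\,\psi$. The two $u\,\partial_{\tau}u\,\psi$ terms cancel, leaving $-\int_{\tilde\om}(\bar w_{\chi}\circ\Upsilon)\,\psi\,\partial_{\tau}\chi\,ds\,d\tau=-\int_{\om}\bar w_{\chi}\,\varphi\,dy\,dt$, which is precisely the distributional formulation with source $\bar w_{\chi}$. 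All boundary terms vanish because $\psi$ is compactly supported, in line with the compactly supported reduction of Lemma~\ref{L:globchi}.

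\textbf{The main obstacle} is that $\Upsilon$ is neither injective nor Lipschitz: characteristics may merge or split, and $\chi(s,\cdot)$ is only monotone, in general not even of bounded variation (cf.~\cite{abc}), so neither the change of variables nor the two integrations by parts are classical. I would make the argument rigorous by mollifying in $\tau$: set $\chi^{\epsilon}(s,\tau):=(\chi(s,\cdot)*\rho_{\epsilon})(\tau)$, which is smooth and still monotone in $\tau$ and $\mathrm C^{1}$ in $s$, with $\partial_{s}\chi^{\epsilon}=u*\rho_{\epsilon}=:u^{\epsilon}$ and $\partial_{s}\partial_{\tau}\chi^{\epsilon}=\partial_{\tau}u^{\epsilon}$ in the classical sense. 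For each $\epsilon$ the change of variables $\int f(s,\chi^{\epsilon})\,\partial_{\tau}\chi^{\epsilon}\,d\tau=\int f(s,\cdot)\,dt$ over the image is valid precisely because $\chi^{\epsilon}(s,\cdot)$ is smooth and monotone, flat parts carrying $\partial_{\tau}\chi^{\epsilon}=0$ and hence preventing any overcounting; and the integrations by parts above are then legitimate. Repeating the computation with $\chi^{\epsilon}$ yields the identity up to the defect $u^{\epsilon}-\phi\circ\Upsilon^{\epsilon}=(\phi\circ\Upsilon)*\rho_{\epsilon}-\phi\circ\Upsilon^{\epsilon}$, where $\Upsilon^{\epsilon}=(s,\chi^{\epsilon})$.

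To close, I would pass to the limit $\epsilon\downarrow0$. The defect tends to $0$ locally uniformly, by uniform continuity of $\phi$ on compacts and $\chi^{\epsilon}\to\chi$ uniformly; the relevant quantities are controlled by the uniform bounds $\|\phi\|_{\infty}$ and $\|\bar w_{\chi}\|_{\infty}<\infty$ together with $\partial_{\tau}\chi^{\epsilon}\ge0$ of controlled total mass; and the images of $\Upsilon^{\epsilon}$ exhaust $\om$ up to a Lebesgue-null set by fullness of the parameterisation. The limit therefore recovers $\int_{\om}\big(\phi\,\partial_{y}\varphi+\tfrac12\phi^{2}\partial_{t}\varphi\big)\,dy\,dt=-\int_{\om}\bar w_{\chi}\,\varphi\,dy\,dt$, i.e.~$\phi$ is a distributional solution of \eqref{E:balance} with source the distribution identified by $\bar w_{\chi}$. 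The structural point making everything work, and the reason a direct change of variables fails, is exactly the monotonicity in $\tau$, which is preserved under mollification and rescues both the area formula and the vanishing of the spurious terms.
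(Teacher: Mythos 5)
Your route is, in outline, the paper's own: observe that a broad solution is a Lagrangian solution, then pass to Lagrangian variables and mollify the parameterisation in $\tau$, which is exactly the proof of Theorem~\ref{T:converseDafermos} (the corollary follows from it once one notes, via Lemma~\ref{L:globchi}, that a full Lagrangian parameterisation exists and that (B.2)--(B.3) of Definition~\ref{D:broadsol} applied to its characteristics yield (LS1)--(LS2) with $\bar w_{\chi}:=\hat w$ --- a step you use implicitly and should state). Your computation in the $(s,\tau)$ variables is correct as algebra, and, made rigorous by the $\tau$-mollification, it does prove what the paper's proof also reaches at the same stage: $\phi$ is a distributional solution whose source is \emph{some} weak-$*$ limit $\bar{\bar w}$ of the uniformly bounded approximate sources $\partial_{s}u^{\epsilon}=(\bar w_{\chi}\circ\Upsilon)*\rho_{\epsilon}$.

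The genuine gap is the very last limit, where you claim the identity is recovered ``with source the distribution identified by $\bar w_{\chi}$''. The term at stake is $\int[(\bar w_{\chi}\circ\Upsilon)*\rho_{\epsilon}]\,\partial_{\tau}\chi^{\epsilon}\,\psi^{\epsilon}\,ds\,d\tau$, a product of two sequences each converging only weakly: the mollified source converges to $\bar w_{\chi}\circ\Upsilon$ in $\mathrm L^{p}_{\loc}(d\tau)$, hence retains only Lebesgue-a.e.~information in $\tau$, while $\partial_{\tau}\chi^{\epsilon}\,d\tau$ converges weak-$*$ to the Stieltjes measure $d_{\tau}\chi$, which in general carries a nontrivial singular (Cantor) part --- see the Appendix and \cite{abc}; this is precisely the situation of splitting characteristics as in Example~\ref{Ex:2}. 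The singular part of $d_{\tau}\chi$ lives on Lebesgue-null sets of parameters, exactly where mollification has erased the values of $\bar w_{\chi}$, so the limit of the product need not equal $\int(\bar w_{\chi}\circ\Upsilon)\,\psi\,d_{\tau}\chi$; a model computation: with $f=\mathbf 1_{C}$ for $C$ a Cantor set and $dg$ the Cantor measure, $(f*\rho_{\epsilon})\equiv 0$ for every $\epsilon$ although $\int f\,dg=1$. Uniform bounds, monotonicity and exhaustion of the images, which is all you invoke, cannot rule this out. Since Definitions~\ref{D:broadsol} and~\ref{D:distribsol} are both relative to the \emph{same} source $w$ of \eqref{E:balance}, this identification is not a refinement but the actual content of the corollary. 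The paper flags exactly this point as non-trivial and settles it by a different mechanism: the distributional solution with source $\bar{\bar w}$ is itself a broad solution (Theorem~\ref{T:univselection}) and intrinsic Lipschitz (Lemma~\ref{L:distrLip}); Rademacher's theorem (Theorem~\ref{T:rademacher}), the consistency conditions (LS2)/(B.3), and Lemma~\ref{L:lipDistr} then force $\bar{\bar w}$, $\bar w_{\chi}$ and $\hat w$ to coincide $\mathcal L^{2}$-a.e.~with the intrinsic gradient. You need this (or an equivalent) argument to close your proof.
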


%

%
\begin{proof}[Proof of Theorem~\ref{T:converseDafermos}]
Let $\chi(s,\tau)$ be a Lagrangian parameterisation associated to $\phi$, and $w:\omega\to\R$ such that
\[
\phi(s,\chi(s,\tau))-\phi(0,\chi(0, \tau))=\int_{0}^{s} w (r, \chi(r, \tau)) dr .
\]
We prove then that $\phi$ is a distributional solution of the balance equation
\[
\phi_{y}(y,t)+ \left[\frac{\phi^{2}(y,t)}{2}\right]_{t}=w(y,t).
\]

{\sf Smoothing of $\phi, \chi$ in the $\tau$-variable.}
Consider a suitable convolution kernel $\rho_{\varepsilon}(\tau)$ and define the $\tau$-regularized function $\chi^{\varepsilon}(s,\tau)$ given by
\begin{align*}
&\chi^{\varepsilon}(s,\tau)=\chi(s,\tau)* \rho_{\varepsilon}(\tau) =\int_{\R} \chi(s,\xi)\rho_{\varepsilon}(\tau-\xi) d\xi=\int_{-\varepsilon}^{\varepsilon} \chi(s,\xi)\rho_{\varepsilon}(\tau-\xi) d\xi.
\end{align*}
This function $\chi^{\varepsilon}$ is smooth but possibly still not injective. However, by the monotonicity of $\chi$ one has that if $\chi^{\varepsilon}(s,\tau_{1})=\chi^{\varepsilon}(s,\tau_{2})$ then $\chi(s,\tau)$ is constant for $\tau\in[\tau_{1}-\varepsilon,\tau_{2}+\varepsilon]$. As a consequence, $\phi(s,\chi(s,\tau))$ is in turn constant for $\tau\in[\tau_{1}-\varepsilon,\tau_{2}+\varepsilon]$.

The above observation allows to define consequently the approximation $\phi^{\varepsilon}(s,t)$ by
\[
\phi^{\varepsilon}(s,\chi^{\varepsilon}(s,\tau))=\partial_{s}\chi^{\varepsilon}(s,\tau)=\partial_{s}\chi(s,\tau)* \rho_{\varepsilon}(\tau)=\phi(s,\chi(s,\tau))* \rho_{\varepsilon}(\tau).
\]
Notice that $\phi^{\varepsilon}$ is well defined and continuous in the $(s,t)$-variables. The measurable subset of points where it is not differentiable is possibly non-empty as a consequence of the non-injectivity of $\chi^{\varepsilon}$, but it is at most countable on every $s$-section. $\phi^{\varepsilon}$ is $C^{\infty}$ on the remaining full measure set. 

Since both $\chi$ and $\phi$ are continuous, the above relations immediately imply the local uniform convergence of the regularized functions:
\begin{align*}
&\chi^{\varepsilon}(s,\tau)
\rightrightarrows  \chi(s,\tau)
&&
\phi^{\varepsilon}(s,\chi^{\varepsilon}(s,\tau)) \rightrightarrows  \phi(s,\chi(s,\tau)) .
\end{align*}

{\sf Convergence in the $(s,t)$-variables of $\phi^{\varepsilon}$.}
Notice that $\partial_{\tau}\chi^{\varepsilon}(s,\tau)dsd\tau$ is converging (as a measure) to $\partial_{\tau}\chi^{}(s,\tau)$, and that their variation is unifromly bounded.
The above procedure not only defines correctly the continuous functions $\phi^{\varepsilon}(s,t)$, but it allows to establish their convergence in $\mathrm L^{1}$: indeed
\begin{align*}
\int |\phi^{\varepsilon}(s,t)-\phi(s,t)| dy 
&= 
\int | \phi^{\varepsilon}(s, \chi^{\varepsilon}(s,\tau))-\phi(s,\chi^{\varepsilon}(s,\tau))| \partial_{\tau}\chi^{\varepsilon}(s,\tau) d\tau 
\\
&\leq 
\int | \phi^{\varepsilon}(s, \chi^{\varepsilon}(s,\tau))-\phi(s,\chi^{}(s,\tau))| \partial_{\tau}\chi^{\varepsilon}(s,\tau) d\tau 
\\
&\qquad+
\int |\phi(s,\chi^{}(s,\tau))-\phi(s,\chi^{\varepsilon}(s,\tau))| \partial_{\tau}\chi^{\varepsilon}(s,\tau) d\tau .
\end{align*}
The first factor in both the integrals is uniformly convergent to zero, while $\partial_{\tau}\chi^{\varepsilon}(s,\tau) dy$ converges to the measure $\partial_{\tau}\chi^{}(s,\tau) $.
The convergence of $(\phi^{\varepsilon})^{2}/2$ is then straightforward.

{\sf Approximation of the source.}
As $\phi^{\varepsilon}$ is a smooth function a.e., one can define an approximate source $w^{\varepsilon}(s,t)$ by
\begin{equation}
\label{E:PDEapprox}
\dede{y}\phi^{\varepsilon}(y,t)+\dede{t}\frac{(\phi^{\varepsilon})^{2}}{2}(y,t)=w^{\varepsilon}(y,t)
\qquad
\dede{s}\chi^{\varepsilon}(s,\tau)=\phi^{\varepsilon}(s,\tau)
.
\end{equation}
The above relation, by the pointwise smoothness is immediately equivalent to
\[
w^{\varepsilon}(s,\chi^{\varepsilon}(s,\tau))
=\partial_{ss}\chi^{\varepsilon}(s,\tau)
=\dede{s}\phi^{\varepsilon}(s,\chi^{\varepsilon}(s,\tau))
 .
\]

Since we started from a Lagrangian parameterisation, the further regularity in the $s$ variable
\[
\partial_{ss}\chi(s,\tau)=\dede{s} \phi(s,\chi(s,\tau))= \bar w(s,\chi(s,\tau))
\]
for the relative pointwise representative $\bar w$ implies the relation
\[
w^{\varepsilon}(s,\chi^{\varepsilon}(s,\tau))
=\partial_{ss}\chi^{\varepsilon}(s,\tau) 
=\partial_{ss}\chi(s,\tau)*\rho_{\varepsilon}(\tau)
= \bar w(s,\chi(s,\tau))*\rho_{\varepsilon}(\tau)
.
\]
In particular, the sources $w^{\varepsilon}$ are uniformly bounded by the $\mathrm L^{\infty}$ bound for $\bar w$.
Moreover, for each $t$ fixed $w^{\varepsilon}(s,\chi^{\varepsilon}(s,\tau))$ converges in all $\mathrm L^{p}(dt)$ to $\bar w(s,\chi(s,\tau))$, and thus in $\mathrm L^{p}(dydt)$; the convergence is clearly uniform when $\bar w$ is continuous.

The LHS of equation~\eqref{E:PDEapprox} passes to the weak limit by the $\mathrm L^{1}$-convergence of $\phi^{\varepsilon}(y,t)$ to $\phi(y,t)$ established above. The same holds as well for the RHS, since $w^{\varepsilon}(y,t)dydt$ converge in $w^{*}-L^{\infty}$ to a function $\bar{\bar w}(y,t)dydt$.
Thus
\[
\phi^{}_{y}(y,t)+\bigg[\frac{\phi^{2}(y,t)}{2}\bigg]_{t}= \bar{\bar w}^{}(y,t).
\]
The function $\phi$ as we have seen is precisely the one in the Lagrangian parameterisation.
Even if $w^{\varepsilon}(s,\chi^{\varepsilon}(s,\tau))$ converges in all $\mathrm L^{p}(dt)$ to $\bar w(s,\chi(s,\tau))$, it is not instead trivial that above $\bar{\bar w}={\bar w}$, Lebesgue almost everywhere, holds. We now explain why it is so.

We prove above in Section~\ref{S:distrbroad} that if
\[
\phi^{}_{y}(y,t)+\bigg[\frac{\phi^{2}(y,t)}{2}\bigg]_{t}= \bar{\bar w}^{}(y,t)
\]
holds then $\phi$ is also a broad solution: there is a point-wise representative $\hat{\bar{\bar w}}$ of $\bar{\bar w}$ such that the broad formulation holds with $\phi, \hat{\bar{\bar w}}$.
However, it is then also intrinsic Lipschitz continuous with intrinsic gradient $\hat{\bar{\bar w}}$: by Rademacher theorem the intrinsic gradient ${\hat w}$ is uniquely defined point-wise almost everywhere, and it by Lemma~\ref{L:lipDistr} it identifies the same distribution as $\bar{\bar w}^{}(y,t)$.
\end{proof}


\begin{remark}\label{R:remarkdimnsection6}{\rm
We finally remark that Theorem~\ref{T:converseDafermos} works immediately also in higher dimensions, because for (almost every) $v\in\R^{2(n-1)}$ the restriction to the plane $\omega_{v}$ of $\phi$ is still a Lagrangian solution, with source term the restriction $\tilde w\restriction_{\omega_{v}}$. For every test function $\varphi\in\ \mathrm C^{\infty}_{c}(\omega, \R)$ we have then by Fubini-Tonelli Theorem
\begin{align*}
\iint_{\omega} 
&\left[ \varphi_{y_{1}} \phi + \varphi_{t} \frac{\phi^{2}}{2} \right]
=
\iint_{\R^{2(n-1)}} dv\int_{\omega_{v}} 
dy_{1}dt
\left[ \varphi_{y_{1}} \phi + \varphi_{t} \frac{\phi^{2}}{2} \right]\restr{\omega_{v}}
\\
&\stackrel{\text{Th.~\ref{T:converseDafermos} }}{=}
-\iint_{\R^{2(n-1)}} dv\int_{\omega_{v}} 
dy_{1}dt
 \left[\tilde w\restr{\omega_{v}}\right]
=
-\iint_{\omega}\tilde w.
\end{align*}
Considering also the linear fields we gain the implication from \eqref{item:lagr} to \eqref{item:distr} in Theorem~\ref{T:othertheorem}, and more generally that a Lagrangian solution is also a distributional solution.}
\end{remark}


%
\appendix

\section{From partial to full Lagrangian parameterisations}
\label{S:extPar}

In the present section we deal with the issue of extending a partial Lagrangian parameterisation to a `full' one. We construct a function $\chi(s,\tau)$ satisfying the ODE~\eqref{E:generalODE}, both monotone and surjective in the $\tau$ variable, which extends a given one $\tilde\chi$.
This is the matter of Lemma~\ref{L:fillinODE}: we recall below how to extend a solution to an ODE with continuous coefficients, whose existence is a classical result.

The procedure can be first understood considering Example~\ref{E:fillholesNoGlobLip} below, illustrated in Figure~\ref{fig:parameterisation}. This deals with the simpler case of an $s$-independent $\phi$, but it has all the ingredients of the general construction of Lemma~\ref{L:fillinODE}.

Moreover, {Example~\ref{E:fillholesNoGlobLip} provides a counterexample for the following fact}: even if characteristics are $\mathrm C^{1}$ in $s$ with Lipschitz derivative, \emph{it is not possible in general to extend a partial, monotone Lagrangian parameterisation to a full one which is locally Lipschitz continuous}.

The reduction of the balance law along characteristics, which is Equation~\eqref{E:evolutionphi}, has been inserted in the text (Lemma~\ref{L:globchi} and Theorem~\ref{T:univselection}).
Here we just notice that it holds, with some $\bar w_{\chi}$ pointwise defined in $\omega$, for a particular Lagrangian parameterisation $(\tilde\omega, \chi)$.
The argument is $2$-dimensional.

\begin{lemma}
\label{L:fillinODE}
Any partial Lagrangian parameterisation can be extended to a full one.
\end{lemma}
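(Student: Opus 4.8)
The statement to prove is Lemma~\ref{L:fillinODE}: any partial Lagrangian parameterisation $(\tilde\omega',\tilde\chi')$ can be extended to a full one $(\tilde\omega,\chi)$. The plan is to cover the ``holes'' in the image of $\tilde\chi'$ by inserting additional characteristic curves, while preserving both the monotonicity property (L.2) and the ODE~\eqref{E:generalODE}, and then to reparameterise the resulting totally ordered family of non-crossing curves by a real parameter as was done in Lemma~\ref{L:globchi}. Throughout I would work in the case $n=1$ (as the section permits) and with the variables $(s,\tau)$.

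First I would identify, for each fixed $s_0$, the set of values in $\omega_{s_0}$ which are \emph{not} attained by $\tau\mapsto\tilde\chi'(s_0,\tau)$; the surjectivity failure is exactly what we must repair. For each point $(\bar s,\bar t)\in\omega$ lying in such a gap I would attach the minimal-forward/maximal-backward characteristic $\gamma(\,\cdot\,;\bar s,\bar t)$ built from the extremal solutions $\gamma_{(\bar s,\bar t)}$ and $\gamma^{(\bar s,\bar t)}$ of~\eqref{E:gammamM}, exactly as in Step~2 of Lemma~\ref{L:globchi}. These curves are Lipschitz integral curves of the continuous field $(1,\phi)$ by the classical ODE theory in~\cite{hartman}, so condition~(L.3) is automatic for them. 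The key point is that the family consisting of the original curves of $\tilde\chi'$ together with the newly inserted ones is still \emph{totally ordered} and \emph{non-crossing}: this is precisely the monotonicity/semigroup argument already carried out in Lemma~\ref{L:solODE} (if two such curves met they would have to coincide from the meeting point onward by minimality/maximality, contradicting the strict ordering, as in the proof of~\eqref{E:lemma4.2eq3}).

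Once I have this enlarged, totally ordered, closed-under-limits family $\mathcal C^{*}$ of characteristics covering every point of $\omega$, I would reparameterise it by the strictly order-preserving map $\theta(\gamma):=\sum_k 2^{-k}\gamma(r_k)$ over an enumeration $\{r_k\}$ of the rationals, exactly as in Step~3 of Lemma~\ref{L:globchi}; compactness (Arzel\`a--Ascoli), order, and connectedness give a continuous inverse $\theta^{-1}$, and then $\chi(s,\tau):=\theta^{-1}(\tau)(s)$ is continuous, monotone in $\tau$, satisfies the ODE, and is onto each section $\omega_{\tau}$ — i.e.\ a full parameterisation. Finally I must check the extension requirement: the original $\tilde\chi'$ embeds order-isomorphically into this family, so defining $j(z,\tau)$ by $j(z,\tau):=\theta\big(\gamma(\,\cdot\,;z,\tilde\chi'(z,\tau))\big)$ yields the Borel injective map $J$ with $\chi\circ J=\tilde\chi'$ demanded by the relation $\preceq$.

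\textbf{The main obstacle.} The delicate step is not the insertion of curves but verifying that the enlarged family remains \emph{non-crossing after taking closure}, since the original partial parameterisation is an arbitrary given one rather than the canonical minimal/maximal family. I would address this by first replacing, on the gaps, the given curves by the extremal ones (which are guaranteed ordered), and showing the two together still do not cross — here one uses that an original curve and an inserted extremal curve, if they touched, could be spliced into a competitor violating minimality or maximality, so they can touch but never strictly cross. A secondary technical point is measurability: the map $J$ must be Borel, which follows because $\theta$ and the extremal-curve construction depend measurably (indeed with the continuity-in-$s$, monotonicity-in-$\tau$ joint Borel structure established in Lemma~\ref{L:solODE}) on the initial data $(z,\tilde\chi'(z,\tau))$. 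The explicit counterexample in Example~\ref{E:fillholesNoGlobLip} shows one cannot hope to do better than continuity — local Lipschitz regularity of $\chi$ genuinely fails — so no stronger regularity should be attempted in the proof.
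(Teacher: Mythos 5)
Your reduction to the machinery of Lemma~\ref{L:globchi} has a genuine gap at its central claim: that the original curves of $\tilde\chi'$ together with the inserted extremal curves $\gamma(\cdot\,;\bar s,\bar t)$ of~\eqref{E:gammamM} form a non-crossing family. The splicing argument you invoke proves an inequality in the wrong direction. If $\gamma$ is the minimal forward solution through a gap point $(\bar s,\bar t)$ and $\gamma_0$ is an original curve with $\gamma_0(\bar s)<\bar t$, and the two first touch at $s^*$, then splicing ($\gamma$ up to $s^*$, then $\gamma_0$ afterwards) produces a competitor through $(\bar s,\bar t)$, and minimality yields $\gamma\le\gamma_0$ on $[s^*,\infty)$ --- i.e.\ it \emph{permits}, indeed pushes toward, a downward crossing; it does not forbid one. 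The point is that $\gamma_0$ is an \emph{arbitrary} solution belonging to the given parameterisation, not an extremal one, so no splicing argument can be turned against it. Concretely, take $\phi(s,t)=\sqrt{|t|}$ as in Example~\ref{Ex:1}, let the given partial parameterisation contain the characteristic $\gamma_0(s)=-s^2/4$ for $s\le 0$, $\gamma_0\equiv 0$ on $[0,1]$, $\gamma_0(s)=(s-1)^2/4$ for $s\ge 1$ (together with curves far above it, leaving a gap), and consider the gap point $(\bar s,\bar t)=(-1,-\tfrac14+\epsilon)$. The minimal forward curve through this point reaches $t=0$ at time approximately $-2\epsilon$ and, being minimal, stays at $0$ forever; hence it starts strictly above $\gamma_0$ and lies strictly below $\gamma_0$ for every $s>1$: a strict crossing. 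Consequently your enlarged family is not totally ordered, $\theta$ fails to be order-preserving on it, and --- worse --- no monotone parameterisation at all can contain both families, so property (L.2) and the extension relation $\chi\circ J=\tilde\chi'$ cannot hold simultaneously.

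This is exactly the difficulty the paper's proof is built around, and it is why that proof does not insert the extremal curves as they stand: an inserted curve through a point $z$ of a gap $I_k$ follows the extremal solution only until the first time it touches one of the two curves $s\mapsto\chi(s,\tau_k^{\pm})$ bounding the gap, and from that time on it is glued to (continues along) the touched curve. This confines every new curve to the band between the two bounding curves, which is what restores the ordering with respect to all old curves and among the new ones. (The paper then performs this insertion recursively at the dichotomous times $s^{h,n}$, enlarging the parameter set by controlled amounts $2^{1-2n}$ and passing to a uniform limit, rather than taking the closure of one large family; your $\theta$-reparameterisation scheme could plausibly replace that bookkeeping once the gluing is incorporated, but without the gluing the approach fails at the first step.)
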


\begin{proof}
Let $\tilde\chi{}(s,\tau)$ be a partial Lagrangian parameterisation.
Focus e.g.~the attention on $s,\tau\in[0,1]$ and $\tilde\chi(s,\tau)$ valued in $[0,1]$ and right continuous, the general case being similar.
We fix also $\lVert{\phi}\rVert_{\infty}\leq 1$.

We construct an extension $\tilde\chi'$ by a recursive procedure. For convenience, the induction index is given by couples $(h,n)$ with $n\in\N$ and $h=0,\dots,2^{n-1}-1$. The ordering is lexicographic, starting from the second variable: $(h_{1},n_{1})\leq (h_{2},n_{2})$ iff either $n_{1}< n_{2}$ or $n_{1}= n_{2}$ and $h_{1}\leq 
 h_{2}$.

The starting point is $\chi^{0}= \tilde \chi$ defined for $s\in[0,1]$, $\tau\in T_{0}=[0,1]$. Consider the dichotomous points $s^{h,n}=2^{-n}+2^{-n+1}h$, which go from $2^{-n}$ to $1-2^{-n}$ at step $2^{-n+1}$, associated to the indexes $(h,n)$ with $n\in\N$ and $h=0,\dots,2^{n-1}-1$. 

{\sf Induction step $(h,n)$, $n\geq 1$: General description.} 
Assume you have been given $\chi$ defined on $(s,\tau)\in [0,1]\times T $ by a previous step.
If at $s=s^{h,n}$ the map $\tau\mapsto\chi(s^{h,n},\tau)$ is not onto $[\chi(s^{h,n},0),\chi(s^{h,n},1)]$ we construct an extension $\chi^{h,n}$ such that 
\begin{itemize}
\item[-] $\tau\mapsto\chi^{h,n}(s^{h,n},\tau)$ is onto $[\chi(s^{h,n},0), \chi(s^{h,n},1)]$ for $h=0,\dots,2^{n-1}-1$;
\item[-] there exists a strictly increasing map $j^{h,n}$, with $\Ll^{1}(j^{h,n}(T))-\Ll^{1}(T)\leq 2^{1-2n}$, such that
	\[\chi^{h,n}(s,j^{h,n}(\tau))=\chi(s,\tau).\]
\end{itemize}
These properties of the new partial Lagrangian parameterisation will determine that we get at the end a limit which is a full parameterisation extending $\tilde\chi$.

{\sf Induction step $(h,n)$, $n\geq 1$: Change of parameter set.} 
Because of monotonicity the complementary of the image of $\tau\mapsto \chi(s^{h,n},\tau)$ is the at most countable union of disjoint intervals $\{I_{k}\}_{k}$, which correspond to the discontinuities $\{\tau^{}_{k}\}_{k}$ of this real valued map.
At those parameters $\{\tau_{k}\}_{k}$ the two characteristics $s\mapsto\chi^{}(s,\tau_{k}^{-})$ and $s\mapsto\chi^{}(s,\tau_{k}^{+})$ bifurcate, and at time $s^{h,n}$ their opening is an interval $I_{k}^{}$:
\[
\min I_{k}^{}=\chi^{}(s^{h,n},(\tau^{}_{k})^{-})
\qquad
\sup I_{k}^{}=\chi^{}(s^{h,n},\tau^{}_{k}).
\]

Define consequently the strictly increasing map opening each of those parameters $\tau_{k}$ into an interval proportional (with factor $1/2^{2n-1}$) to the hole $I_{k}$ that the relative characteristics leave at $s^{h,n}$:
\begin{align*}
&&T&&\to& & T_{h,n}=[0,j^{h,n}(1)]&&
\\
j^{h,n}&:& \tau&&\mapsto& & \tau+\Ll^{1}(\cup_{\tau_{k}^{}\leq \tau}I_{k}^{})/2^{2n-1} .&&
\end{align*}
The inequality $|T_{h,n}|-|T|\leq 2^{1-2n}$ holds because we are assuming that $\chi$ is valued in $[0,1]$, thus $\Ll^{1}(\cup_{k} I_{k})\leq 1$.
The set $T_{h,n}$ will be the new space of parameters, and the injection $j^{h,n}$ will bring from the old set $T$ to the new one.

{\sf Induction step $(h,n)$, $n\geq 1$: Extension of the parameterisation.} 
We just constructed a new parameter set $T_{h,n}$ and an immersion $j^{h,n}$ from the old one $T$.
In particular, the Lagrangian parameterisation is fixed on $j^{h,n}(T)$, but in order to conclude the induction step we need to define the new Lagrangian parameterisation on $T_{h,n}\setminus j^{h,n}(T)$.
We clearly need to respect also the monotonicity property, taking into account that part of the parameterisation is already fixed: the new characteristics that we are going to to define must not cross the old ones.

For each $z\in I_{k}^{}$, consider the $\mathrm C^{1}$ maximal curve through $(s^{h,n},z)$ defined at~\eqref{E:gammamM} until it touches either $s\mapsto\chi^{}(s,\tau_{k}^{+}) $ or $s\mapsto\chi^{}(s,\tau_{k}^{-}) $, in which case it goes on with the curve which has been touched. It is a little complicated to write formally, but it is just that. The times when this touching happens, if it ever happens, are
\begin{align*}
&s_{+}^{\pm}(s^{h,n},z)=\inf \Big\{ 1, \ \{s>s^{h,n}:\  \gamma^{(s^{h,n},z)}(s) = \chi^{}(s,\tau_{k}^{\pm})\} \Big\}
\\ 
&s_{-}^{\pm}(s^{h,n},z)=\sup \Big\{ 0,\ \{s>s^{h,n}:\  \gamma^{(s^{h,n},z)}(s) = \chi^{}(s,\tau_{k}^{\pm})\} \Big\}.
\end{align*}
With the notation that $\{s: a \leq s \leq b \}$ is empty if $b<a$, a possible right continuous extension is then give by
\[
\gamma(s;s^{h,n},z):=
\begin{cases}
\gamma^{(s^{h,n},z)}(s)
&\text{for $ s_{-}^{-}(s^{h,n},z) \bigvee s_{-}^{+}(s^{h,n},z)  \leq s \leq  s_{+}^{-}(s^{h,n},z) \bigwedge s_{+}^{+}(s^{h,n},z)$,}
\\
\chi^{}(s,\tau_{k}^{+})
&\text{for $  s_{+}^{+}(s^{h,n},z) \leq s \leq  s_{+}^{-}(s^{h,n},z)$ and $s_{-}^{-}(s^{h,n},z) \leq s \leq  s_{-}^{+}(s^{h,n},z)$}
\\
\chi^{}(s,\tau_{k}^{-})
&\text{otherwise.}
\end{cases}
\]

Define finally on $[0,1]\times T_{h,n}$ the Lagrangian parameterisation which coincides with the previous one on the image of the old parameter set, and which is extended as described above elsewhere:
\[
\chi^{h,n}(s, \sigma)=
\begin{cases}
\text{if $\sigma=j^{h,n}(\tau)$} 
&
\chi^{}(s,\tau) ,
\\
\text{if $\sigma\in \big[j^{h,n}(\tau_{k}^{-}), j^{h,n}(\tau_{k})\big)$, $z:=\chi^{}(s^{h,k},\tau_{k}^{+}) - 2^{2n-1} \big(j^{h,n}(\tau_{k}^{+})-\sigma\big)$}
&
\gamma(s;s^{h,n},z) .
\end{cases}
\]
For the second line, $j^{h,n}(\tau_{k}^{+})-\sigma$ is the length of the interval $\llbracket \sigma, j^{h,n}(\tau_{k}^{+})\rrbracket$, part of the ones added to the parameter set precisely at the $(h,n)$-th step. Rescaled by $2^{2n-1}$, it gives the length of the segment $\llbracket z, \chi^{}(s^{h,k},\tau_{ k}^{+})\rrbracket$: the point $(s^{h,n},z)$ is where we start for defining the characteristic $\gamma(s;s^{h,n},z)$ that we are inserting for extending the Lagrangian parameterisation.
Notice that also the surjectivity property at $s=s^{h,n}$ is satisfied.

{\sf Conclusion.} 
Let us first look at how much the domain $T_{0}$ grows in the extension process.
Since $2^{n-1}$ couples of indices have second variable $n$, then the total size of the intervals added by those couples alltogether is at most $2^{-n}$: thus, setting $T_{n}=\cup_{h} T_{h,n}$,
\[
T_{0}=[0,1],
\quad
T_{1}\subset [0,3/2],
\quad
\dots\ ,
\quad
T_{n}\subset [0,2-2^{-n}],
\quad
\dots\ .
\]
The maps $ j^{\bar h,\bar n}\circ\dots\circ j^{0,1}$ are
\begin{itemize}
\item[-] strictly increasing;
\item[-] valued in $[0,2-2^{-\bar n}]$;
\item[-] with $1$-Lipschitz inverses, provided that at discontinuity points of $ j^{\bar h,\bar n}$ one fills the graph.
\end{itemize}
By Ascoli-Arzel\`a theorem the inverses converge uniformly, to a monotone map which is the inverse of a right continuous function $j:[0,1]\to[0,2]$. By construction $j$ is strictly increasing.

By construction each $\chi^{\bar h,\bar n}$ is surjective at each $s=s^{h,n}$ with $(h,n)\leq (\bar h,\bar n)$.

The Lagrangian parameterisations $\chi^{\bar h,\bar n}$ and  $\chi^{h, n}$ have different domains for the second components, the space of parameters $T_{h,n}$, $T_{\bar h,\bar n}$. However, as seen in the previous step there are strictly monotone injections from each of them to the interval $\llbracket 0,2 \rrbracket$ given by $\circ_{\substack{(\ell,m)\geq ( h, n)}} j^{\ell,m}$ and $\circ_{\substack{(\ell,m)\geq (\bar h,\bar n)}} j^{\ell,m}$. Being strictly monotone, they are invertible if we fill the graphs at discontinuity points: we can compare these compositions, with the same second component in $\llbracket 0,2 \rrbracket$, and we find for them
\[
\lVert\chi^{\bar h,\bar n}-\chi^{h, n}\rVert_{\infty}\leq 2^{-n}.
\]
The sequence of these compositions therefore converges uniformly. 
One verifies that the limit is a monotone Lagrangian parameterisation $\chi$ which extends $\tilde \chi$, with injection map $j$.


Being surjective and monotone, each $\tau\to \chi(s^{h,m},\tau)$ is continuous. By the continuity in $s$ we deduce then surjectivity also at the remaining times: indeed if by absurd we had $\chi(\bar s, \bar\tau^{-})\neq \chi(\bar s, \bar\tau^{+}) $, we could not have $\chi(s^{h,m}, \bar\tau^{-})= \chi(s^{h,m}, \bar\tau^{+}) $ at $s^{h,m}$ arbitrarily close to $\bar s$.
\end{proof}

The following example introduces the extension of a Lagrangian parameterisation. It shows moreover that it is not possible in general to get a full one which is Lipschitz continuous, even though characteristics below are twice continuously differentiable.

\begin{example}
\label{E:fillholesNoGlobLip}{
Consider the very simple equation for $(z,t)$ in the rectangle $[z_{\infty},0]\times[0,1]$
\[
\left[\frac{\phi^{2}(z,t)}{2}\right]_{t}=w(z,t),
\qquad
\phi_{z}(z,t)=0.
\]
Being $\phi$ dependent on one variable, we change notation and we write $\phi(z,t)=\phi(z)$.
\\
We consider the partial Lagrangian parameterisation $\chi_{m}$ defined in Lemma~\ref{L:solODE}.
This would need to specify the function $\phi$: since the construction is involved, we specify it below and the reader can immediately visualise it in Figure~\ref{fig:parameterisation} (left side), where a family of integral curves $\dot\gamma(s)=\phi(\gamma(s))$ is drown.
\begin{figure}[htp!]
\includegraphics[width=.8\linewidth]{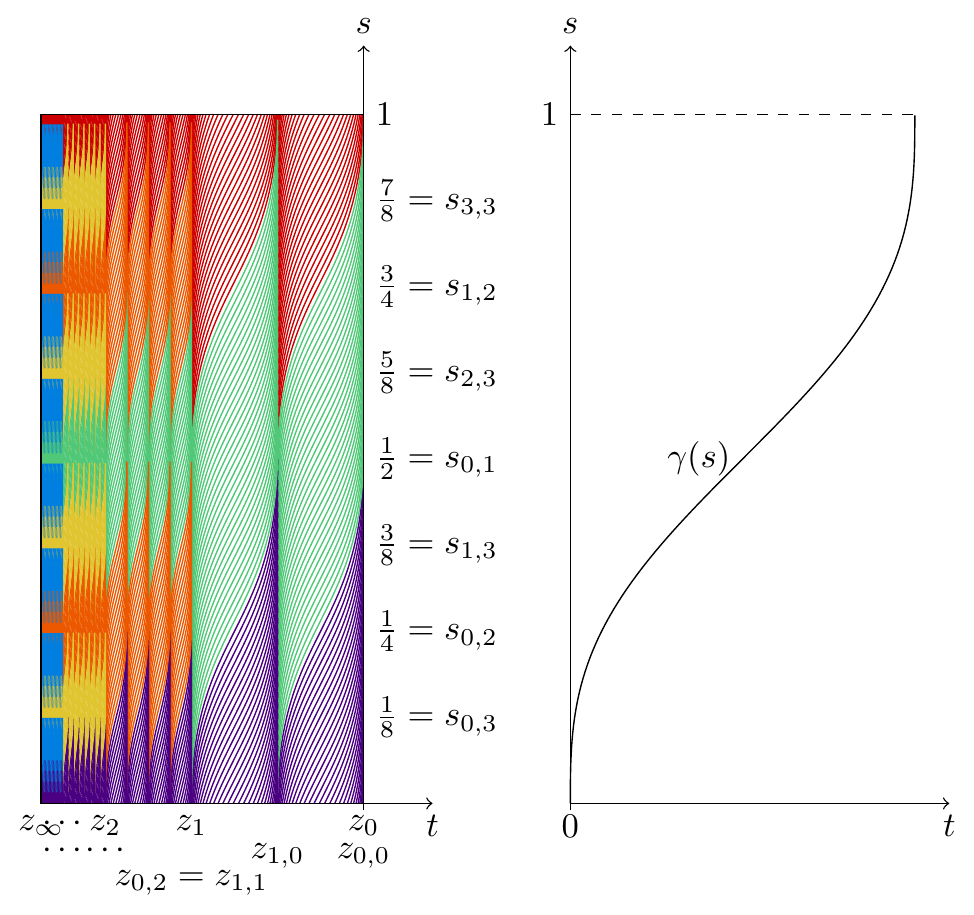}
\caption{A partial monotone Lagrangian parameterisation is extended to a `full' one. Different colors denote different steps in the extension, which are countably many. Each step corresponds to a dichotomous value of $s$: the relative extension of the parameterisation must cover the relative $s$-section. In this example the full parameterisation can not be locally Lipschitz.}
\label{fig:parameterisation}
\end{figure}

We now describe in details the construction.

{\sf Step 1: Building block (Figure~\ref{fig:parameterisation}, right side).}
Define first a smooth function $\gamma(s)$, for $s\in[0,1]$, which increases continuously from $0$ to $1$.
Let $\dot\gamma(s-1/2)$ be even, strictly increasing in the first half interval from $0$ to its maximum.
Let $\ddot \gamma $ vanish at $0;1/2;1$ and be positive in $[0,1/2]$.
For instance, consider
\[
\gamma(s)=s+1/(2\pi)\sin(2\pi s-\pi),
\qquad
\gamma'(s)=1+\cos(2\pi s-\pi).
\]

{\sf Step 2: Iteration step.}
We define now a first sequence of points  $z_{0}\geq z_{1} \geq \dots z_{i}\downarrow z_{\infty}$ and intermediate ones $z_{i}=z_{2^{i},i} \leq \dots \leq z_{0,i}=z_{i-1}$ where $\phi(z)$ and $w(z)$ will vanish.
The first ones are approximatively $0$, $-0.455$, $-0.635$, $-0.713$, $-0.748$, $-0.764$, $-0.772$, \dots , and each interval $[z_{i+1},z_{i}]$ is divided into $2^i$ equal subintervals for determining the points $z_{h,i}$.
For $i\in\N$, $h=0,\dots,2^{i}$
\begin{gather*}
z_{0}:=0,
\qquad
z_{i-1} - z_{i} = \frac{2^{-i}}{\ln (i+2)}
\qquad
{z_{\infty}:=-\sum_{j=1}^{\infty}\frac{2^{-j}}{\ln (j+2)}},
\\
z_{i}=-\sum_{j=1}^{i}\frac{2^{-j}}{\ln (j+2)}=z_{2^{i},i}=z_{0,i+1}
,
\qquad
z_{h,i}:=z_{i-1}-\frac{2^{-2i}h}{\ln (i+2)}
.
\end{gather*}

{\sf Step 3: Iteration.}
Now we define the functions $\phi(z)$, $w(z)$ on each subinterval $[z_{h+1,i},z_{h,i}]$. As a preliminary half-step consider the rescaled smooth functions $\gamma_{h,i}(s)$ given by
\[
\gamma_{h,i}(s)=z_{h+1,i}+\frac{\gamma(2^{i}s)}{2^{2i}\ln (i+2)},
\qquad
s\in [0,2^{-i}],
\]
Notice that each increases monotonically from $z_{h+1,i}$ to $z_{h,i}$, and the first two derivatives vanish at the endpoints.
In particular, we can associate to each $z\in[z_{\infty}, 0]$ a curve $\gamma_{h,i}(s)$ such that
\[
\gamma_{h,i}(s)=z.
\]
It will be unique out of the points $\{z_{h,i}\}_{h,i}$, while at these junctions there will be two such curves, with however vanishing first two derivatives.
Observing Figure~\ref{fig:parameterisation}, this curve $\gamma_{h,i}(s)$ is just the first segment of what will be part of $\chi_{m}$
\[
[0,2^{i}]\ni s \mapsto \gamma_{h,i}(s) \equiv \chi_{m}(s,z^{+}_{h+1,i}).
\]

Define then,
\[
\text{for $s$: }\gamma_{h,i}(s)=z,
\quad\quad
\phi(z)=\dot\gamma_{h,i}(s)
\quad \&\quad
w(z)=\ddot \gamma_{h,i}(s)
.
\]
These functions are clearly continuous out of the nodes $\{z_{h,i}\}_{h,i}$, and they vanish there, but the continuity at the limit point $z_{\infty} $ should be checked. It holds because $|\dot\gamma_{h,i}|\leq 2^{-i}/\ln (i+2)$ and $|\ddot\gamma_{h,i}|\leq 1/\log (i+2)$, which implies they vanish for $i\uparrow \infty$.

{\sf Step 4: non-Lipschitz Lagrangian Parameterisation.}
Having defined the function $\phi$, we have already defined the partial Lagrangian parameterisation $\chi_{m}$ of Lemma~\ref{L:solODE}.
We now extend it to a surjective one, but there is no way of having it Lipschitz continuous, as we compute now.
Different colors in Figure~\ref{fig:parameterisation} show different steps of the extension process.

1. Minimal characteristics starting from $z=0$ do not cover almost all the interval at $z=1$. Adding those starting at $z=1$, it remains to cover at $z=1/2$ open intervals of total length
\[
\sum_{i=1}^{\infty}
\sum_{h=0}^{2^{i}}
(z_{h,i}-z_{h+1,i})
=
\sum_{i=1}^{\infty}2^{i}\frac{2^{-2i}}{\ln (i+2)}=z_{0}-z_{\infty}.
\]
2. Including at the second step all those minimal characteristics which intersect the line $z=1/2$, similarly, one does not cover the whole line $z=1/4;3/4$: a length
\[
\sum_{i=2}^{\infty}\frac{2^{-i}}{\ln (i+2) }= z_1 - z_{\infty}
\]
remains to cover at both those two values of $z$.
\\
i. At the subsequent $i$-th step, at each of the $2^{i-1}$ values of
\[
s_{h,i}=2^{-i}+h2^{-i+1},
\qquad h=0,\dots,2^{i-1}-1
\]
one needs covering a length $\sum_{j=i}^{\infty}2^{-j}/\ln (j+2) $.
In the whole process, it must be covered a total length equal to
\begin{align*}
\sum_{j=1}^{\infty} &(1+\dots+2^{j-1})\frac{2^{-j}}{\ln (j+2)}
\\
&=
\sum_{j=1}^{\infty} (2^{j}-1)\frac{2^{-j}}{\ln (j+2)}
\\
&=\sum_{j=1}^{\infty} \frac{1}{\ln (j+2)}
-
\sum_{j=1}^{\infty} \frac{2^{-j}}{\ln (j+2)}
=
+\infty.
\end{align*}

Any monotone, Lagrangian parameterisation $\chi$ must map a disjoint family of real intervals $\{I_{h,i}\}_{h,i}$ with $\sum_{h,i} |I_{h,i}|<1$ to the intervals $\{[z_{\infty},z_{i+1}]\}_{i}$, respectively at the above valies $\{s_{h,i}\}_{h,i}$.
However, we have just computed that for all constants $C$
\begin{align*}
\infty 
=&
\sum_{h=0,\dots,2^{i-1}-1,\ i\in\N} |z_{i+1}-z_{\infty}|
\\
=&
\sum_{h=0,\dots,2^{i-1}-1,\ i\in\N} \chi(z_{h,i}, [I_{h,i}])
\\
>&
C\sum_{h=0,\dots,2^{i-1}-1,\ i\in\N}| I_{h,i}| ,
\end{align*}
preventing any Lipschitz regularity. Indeed, the map $\tau\mapsto\chi(s_{h,i},\tau)$ can be Lipschitz with some constant $C_i$, but $C_i$ must blow up as $i\to\infty$. At other values of $s$, this map is just $\mathrm W^{1,1}(\R)$.

Notice finally that the fact that the blow-up of the Lipschitz constant is caused here by a behaviour approaching the boundary $z=z_{\infty}$ is incidental. Indeed, one can extend the present construction also on $\{z< z_{\infty} \}$ for example reflecting the characteristics w.r.t.~the axis $\{z=z_{\infty}\}$.

One can as well construct examples where $\chi$ has a Cantor part (see~\cite{abc}).}
\end{example}


\end{document}